\author{Dimitri Chikhladze}
\title{Quantum Modules}
\date{}
\newcommand{\pocorner}{\hbox to 10pt{{\vrule height10pt depth0pt width0.5pt}%
    \vbox to 10pt{{\hrule height0.5pt width9.5pt depth0pt}\vfill}}}
\newcommand{\@poexcursion}[1]{\save[]-<0pt+#1,-12pt>*{\pocorner}\restore}
\newcommand{\pbcorner}{\vbox to 0pt{\kern 5pt\hbox to 0pt{\kern 5pt%
      \vbox{{\hrule height0.5pt width9.5pt depth0pt}}%
      {\vrule height10pt depth0pt width0.5pt}\hss}\vss}}
\def\@pbexcursion[#1]{\save[]+DR-<16pt,-8pt>+<#1,0pt>*{\pbcorner}\restore}
\newcommand{\pbexcursion}{\@ifnextchar[\@pbexcursion{\@pbexcursion[0pt]}}
\def\arr@fn{\futurelet\arr@next}
\def\arr@dn{\def\arr@next}
\newtoks\arr@toks
\def\addtoarr@toks#1{\arr@toks=\expandafter{\the\arr@toks#1}}
\def\arr@upperlab{}
\def\arr@lowerlab{}
\def\arr@mods{}
\def\arr@uppermods{}
\def\arr@lowermods{}
\newbox\arr@box
\newdimen\arr@dimen
\newdimen\arr@spacer
\newif\ifarr@fixeddim
\def\arr@updatedimen#1{%
  \ifarr@fixeddim\else\setbox\arr@box=\hbox{$\m@th\scriptstyle #1$}%
  \ifdim\wd\arr@box>\arr@dimen \arr@dimen=\wd\arr@box\fi\fi}
\def\parsearr@@{%
    \ifx\space@\arr@next \expandafter\arr@dn\space{\arr@fn\parsearr@@}%
    \else\ifx ^\arr@next \arr@dn ^##1{\def\arr@upperlab{^{##1}}\arr@fn\parsearr@@}%
    \else\ifx _\arr@next \arr@dn _##1{\def\arr@lowerlab{_{##1}}\arr@fn\parsearr@@}%
    \else\ifx <\arr@next \arr@dn <##1>{\arr@fixeddimtrue\arr@dimen=##1%
      \arr@fn\parsearr@@}
    \else\addAT@\ifx\arr@next \addAT@\arr@dn[##1]{\def\arr@mods{##1}\arr@fn\parsearr@@}%
    \else\ifarr@fixeddim\else\arr@updatedimen{\arr@upperlab}%
        \arr@updatedimen{\arr@lowerlab}\advance\arr@dimen by \arr@spacer\fi%
      \arr@toks={\xy<0pt,0pt>*+{}\ar}%
      \expandafter\addtoarr@toks\expandafter{\arr@mods}%
      \expandafter\addtoarr@toks\expandafter{\arr@upperlab}%
      \expandafter\addtoarr@toks\expandafter{\arr@lowerlab}%
      \addtoarr@toks{<\arr@dimen,0pt>*+{}\endxy}%
      \arr@dn{\the\arr@toks}%
    \fi\fi\fi\fi\fi\arr@next}
\newcommand{\arrow}{%
  \arr@fixeddimfalse\arr@dimen=0pt\def\arr@upperlab{}\def\arr@lowerlab{}%
  \def\arr@mods{}\arr@fn\parsearr@@}
\def\parsearrpair@@{%
    \ifx\space@\arr@next \expandafter\arr@dn\space{\arr@fn\parsearrpair@@}%
    \else\ifx ^\arr@next \arr@dn ^##1{\def\arr@upperlab{^{##1}}\arr@fn\parsearrpair@@}%
    \else\ifx _\arr@next \arr@dn _##1{\def\arr@lowerlab{_{##1}}\arr@fn\parsearrpair@@}%
    \else\ifx <\arr@next \arr@dn <##1>{\arr@fixeddimtrue\arr@dimen=##1%
      \arr@fn\parsearrpair@@}
    \else\addAT@\ifx\arr@next \addAT@\arr@dn[{\arr@fn\parsearrpair@@@}%
    \else\ifarr@fixeddim\else\arr@updatedimen{\arr@upperlab}%
        \arr@updatedimen{\arr@lowerlab}\advance\arr@dimen by \arr@spacer\fi%
      \arr@toks={\xy<0pt,0pt>*+{}="a", <\arr@dimen,0pt>*+{}="b"}%
      \expandafter\addtoarr@toks\expandafter{\addAT@\ar<2pt>}%
      \expandafter\addtoarr@toks\expandafter{\arr@uppermods}%
      \addtoarr@toks{"a";"b"}%
      \expandafter\addtoarr@toks\expandafter{\arr@upperlab}%
      \expandafter\addtoarr@toks\expandafter{\addAT@\ar<-2pt>}%
      \expandafter\addtoarr@toks\expandafter{\arr@lowermods}%
      \addtoarr@toks{"a";"b"}%
      \expandafter\addtoarr@toks\expandafter{\arr@lowerlab}%
      \addtoarr@toks{\endxy}\arr@dn{\the\arr@toks}%
    \fi\fi\fi\fi\fi\arr@next}
\def\parsearrpair@@@{%
  \ifx u\arr@next \arr@dn u##1]{\def\arr@uppermods{##1}\arr@fn\parsearrpair@@}%
  \else\arr@dn l##1]{\def\arr@lowermods{##1}\arr@fn\parsearrpair@@}%
  \fi\arr@next}
\newcommand{\arrowpair}{%
  \arr@fixeddimfalse\arr@dimen=0pt\def\arr@upperlab{}\def\arr@lowerlab{}%
  \def\arr@uppermods{}\def\arr@lowermods{}\arr@fn\parsearrpair@@}
\newcommand{\epi}{\arrow@[@{->>}]}
\newcommand{\cover}{\arrow@[@{-|>}]}
\newcommand{\spanarr}{\arrow@[|-@{|}]}
\newcommand{\inc}{\arrow@[@{u(->}]}
\newcommand{\mapto}{\arrow@[@{|->}]}
\newcommand{\mono}{\arrow@[@{ >->}]}
\newcommand{\mat}{\arrow@[|-*{\object@{o}}]}
\newcommand{\twocell}{\arrow@[@{=>}]}
 \def\dated#1{\def\thedate{#1}}%
 \newdimen\xydashw@@
\newdimen\high%
\newdimen\ul%
\newdimen\wdth%
\def\ratchet#1#2{\ifnum#1<#2\global #1=#2\fi}%
\def\ifnextchar#1#2#3{\let\@tempe%
#1\def\@tempa{#2}\def\@tempb{#3}\futurelet%
    \@tempc\@ifnch}%
\def\@ifnch{\ifx \@tempc \@sptoken \let\@tempd\@xifnch%
      \else \ifx \@tempc \@tempe\let\@tempd\@tempa\else\let\@tempd\@tempb\fi%
      \fi \@tempd}%
\def\:{\let\@sptoken= } \:  
\def\:{\@xifnch} \expandafter\def\: {\futurelet\@tempc\@ifnch}%
\let\ifnextchar\@ifnextchar%
\newdimen\axis \axis=\fontdimen22\textfont2%
\def\scalefactor#1{\ul=#1\ul \X@xbase=#1\X@xbase \Y@ybase=#1\Y@ybase}%
\def\fontscale#1{%
\if#1h\relax%
\font\xydashfont=xydash10 scaled \magstephalf%
\font\xyatipfont=xyatip10 scaled \magstephalf%
\font\xybtipfont=xybtip10 scaled \magstephalf%
\font\xybsqlfont=xybsql10 scaled \magstephalf%
\font\xycircfont=xycirc10 scaled \magstephalf%
\else%
\font\xydashfont=xydash10 scaled \magstep#1%
\font\xyatipfont=xyatip10 scaled \magstep#1%
\font\xybtipfont=xybtip10 scaled \magstep#1%
\font\xybsqlfont=xybsql10 scaled \magstep#1%
\font\xycircfont=xycirc10 scaled \magstep#1%
\fi}%
\def\bfig{\vcenter\bgroup\xy}%
\def\efig{\endxy\egroup}%
\def\car#1#2\nil{#1}%
\def\morphism{\ifnextchar({\morphismp}{\morphismp(0,0)}}%
\def\morphismp(#1){\ifnextchar|{\morphismpp(#1)}{\morphismpp(#1)|a|}}%
\def\morphismpp(#1)|#2|{\ifnextchar/{\morphismppp(#1)|#2|}%
    {\morphismppp(#1)|#2|/>/}}%
\def\morphismppp(#1)|#2|/#3/{%
    \ifnextchar<{\morphismpppp(#1)|#2|/#3/}%
    {\morphismpppp(#1)|#2|/#3/<\default,0>}}%
\def\morphismpppp(#1,#2)|#3|/#4/<#5,#6>[#7`#8;#9]{%
\xend#1\advance \xend by #5%
\yend#2\advance \yend by #6%
\domorphism(#1,#2)|#3|/#4/<#5,#6>[{#7}`{#8};{#9}]}%
\def\domorphism(#1,#2)|#3|/#4/<#5,#6>[#7`#8;#9]{%
\def\next{\car#4.\nil}%
\if@\next\relax%
 \if#3l%
  \ifnum #6>0%
   \POS(#1,#2)*+!!<0ex,\axis>{#7}\ar#4^-{#9} (\xend,\yend)*+!!<0ex,\axis>{#8}%
  \else%
   \POS(#1,#2)*+!!<0ex,\axis>{#7}\ar#4_-{#9} (\xend,\yend)*+!!<0ex,\axis>{#8}%
  \fi%
 \else \if#3m%
    \setbox0\hbox{$#9$}%
   \ifdim \wd0=0pt%
     \POS(#1,#2)*+!!<0ex,\axis>{#7}\ar#4 (\xend,\yend)*+!!<0ex,\axis>{#8}%
   \else%
     \POS(#1,#2)*+!!<0ex,\axis>{#7}\ar#4|-*+<1pt,4pt>{\labelstyle#9}%
       (\xend,\yend)*+!!<0ex,\axis>{#8}%
   \fi%
 \else \if#3r%
  \ifnum #6<0%
   \POS(#1,#2)*+!!<0ex,\axis>{#7}\ar#4^-{#9} (\xend,\yend)*+!!<0ex,\axis>{#8}%
  \else%
   \POS(#1,#2)*+!!<0ex,\axis>{#7}\ar#4_-{#9} (\xend,\yend)*+!!<0ex,\axis>{#8}%
  \fi%
 \else \if#3a%
  \ifnum #5>0%
   \POS(#1,#2)*+!!<0ex,\axis>{#7}\ar#4^-{#9} (\xend,\yend)*+!!<0ex,\axis>{#8}%
  \else%
   \POS(#1,#2)*+!!<0ex,\axis>{#7}\ar#4_-{#9} (\xend,\yend)*+!!<0ex,\axis>{#8}%
  \fi%
 \else \if#3b%
  \ifnum #5<0%
   \POS(#1,#2)*+!!<0ex,\axis>{#7}\ar#4^-{#9} (\xend,\yend)*+!!<0ex,\axis>{#8}%
  \else%
   \POS(#1,#2)*+!!<0ex,\axis>{#7}\ar#4_-{#9} (\xend,\yend)*+!!<0ex,\axis>{#8}%
  \fi%
 \else%
   \POS(#1,#2)*+!!<0ex,\axis>{#7}\ar#4 (\xend,\yend)*+!!<0ex,\axis>{#8}%
 \fi\fi\fi\fi\fi%
\else%
 \if#3l%
  \ifnum #6>0%
   \POS(#1,#2)*+!!<0ex,\axis>{#7}\ar@{#4}^-{#9} (\xend,\yend)*+!!<0ex,\axis>{#8}%
  \else%
   \POS(#1,#2)*+!!<0ex,\axis>{#7}\ar@{#4}_-{#9} (\xend,\yend)*+!!<0ex,\axis>{#8}%
  \fi%
 \else \if#3m%
    \setbox0\hbox{$#9$}%
   \ifdim \wd0=0pt%
     \POS(#1,#2)*+!!<0ex,\axis>{#7}\ar@{#4} (\xend,\yend)*+!!<0ex,\axis>{#8}%
   \else%
     \POS(#1,#2)*+!!<0ex,\axis>{#7}\ar@{#4}|-*+<1pt,4pt>{\labelstyle#9}%
         (\xend,\yend)*+!!<0ex,\axis>{#8}%
   \fi%
 \else \if#3r%
  \ifnum #6<0%
   \POS(#1,#2)*+!!<0ex,\axis>{#7}\ar@{#4}^-{#9} (\xend,\yend)*+!!<0ex,\axis>{#8}%
  \else%
   \POS(#1,#2)*+!!<0ex,\axis>{#7}\ar@{#4}_-{#9} (\xend,\yend)*+!!<0ex,\axis>{#8}%
  \fi%
 \else \if#3a%
  \ifnum #5>0%
   \POS(#1,#2)*+!!<0ex,\axis>{#7}\ar@{#4}^-{#9} (\xend,\yend)*+!!<0ex,\axis>{#8}%
  \else%
   \POS(#1,#2)*+!!<0ex,\axis>{#7}\ar@{#4}_-{#9} (\xend,\yend)*+!!<0ex,\axis>{#8}%
  \fi%
 \else \if#3b%
  \ifnum #5<0%
   \POS(#1,#2)*+!!<0ex,\axis>{#7}\ar@{#4}^-{#9} (\xend,\yend)*+!!<0ex,\axis>{#8}%
  \else%
   \POS(#1,#2)*+!!<0ex,\axis>{#7}\ar@{#4}_-{#9} (\xend,\yend)*+!!<0ex,\axis>{#8}%
  \fi%
 \else%
   \POS(#1,#2)*+!!<0ex,\axis>{#7}\ar@{#4} (\xend,\yend)*+!!<0ex,\axis>{#8}%
 \fi\fi\fi\fi\fi%
\fi\ignorespaces}%
\def\vect(#1,#2)/#3/<#4,#5>{%
 \xend#1 \yend#2 \advance\xend by #4 \advance\yend by #5%
     \POS(#1,#2)\ar#3 (\xend,\yend)}%
\def\squarepppp(#1,#2)|#3|/#4`#5`#6`#7/<#8>[#9]{%
\xpos#1\ypos#2%
\def\next|##1##2##3##4|{%
 \def\xa{##1}\def\xb{##2}\def\xc{##3}\def\xd{##4}\ignorespaces}%
\next|#3|%
\def\next<##1,##2>{\deltax=##1\deltay=##2\ignorespaces}%
\next<#8>%
\def\next[##1`##2`##3`##4;##5`##6`##7`##8]{%
    \def\nodea{##1}\def\nodeb{##2}\def\nodec{##3}\def\noded{##4}%
    \def\labela{##5}\def\labelb{##6}\def\labelc{##7}\def\labeld{##8}\ignorespaces}%
\next[#9]%
\morphism(\xpos,\ypos)|\xd|/{#7}/<\deltax,0>[\nodec`\noded;\labeld]%
\advance \ypos by \deltay%
\morphism(\xpos,\ypos)|\xb|/{#5}/<0,-\deltay>[\nodea`\nodec;\labelb]%
\morphism(\xpos,\ypos)|\xa|/{#4}/<\deltax,0>[\nodea`\nodeb;\labela]%
 \advance \xpos by \deltax%
\morphism(\xpos,\ypos)|\xc|/{#6}/<0,-\deltay>[\nodeb`\noded;\labelc]%
\ignorespaces}%
\def\square{\ifnextchar({\squarep}{\squarep(0,0)}}%
\def\squarep(#1){\ifnextchar|{\squarepp(#1)}{\squarepp(#1)|alrb|}}%
\def\squarepp(#1)|#2|{\ifnextchar/{\squareppp(#1)|#2|}%
    {\squareppp(#1)|#2|/>`>`>`>/}}%
\def\squareppp(#1)|#2|/#3`#4`#5`#6/{%
    \ifnextchar<{\squarepppp(#1)|#2|/#3`#4`#5`#6/}%
    {\squarepppp(#1)|#2|/#3`#4`#5`#6/<\default,\default>}}%
\def\diamondpppp(#1,#2)|#3|/#4`#5`#6`#7/<#8>[#9]{%
\xpos#1\ypos#2%
\def\next|##1##2##3##4|{%
 \def\xa{##1}\def\xb{##2}\def\xc{##3}\def\xd{##4}\ignorespaces}%
\next|#3|%
\def\next<##1,##2>{\deltax=##1\deltay=##2\ignorespaces}%
\next<#8>%
\def\next[##1`##2`##3`##4;##5`##6`##7`##8]{%
    \def\nodea{##1}\def\nodeb{##2}\def\nodec{##3}\def\noded{##4}%
    \def\labela{##5}\def\labelb{##6}\def\labelc{##7}%
\def\labeld{##8}\ignorespaces}%
\next[#9]%
\advance\ypos\deltay
\morphism(\xpos,\ypos)|\xc|/{#6}/<\deltax,-\deltay>[\nodeb`\noded;\labelc]%
\advance\xpos \deltax
\advance\xpos \deltax
\morphism(\xpos,\ypos)|\xd|/{#7}/<-\deltax,-\deltay>[\nodec`\noded;\labeld]%
\advance\ypos\deltay \advance\xpos -\deltax
\morphism(\xpos,\ypos)|\xa|/{#4}/<-\deltax,-\deltay>[\nodea`\nodeb;\labela]%
\morphism(\xpos,\ypos)|\xb|/{#5}/<\deltax,-\deltay>[\nodea`\nodec;\labelb]%
}
\def\diamondp(#1){\ifnextchar|{\diamondpp(#1)}{\diamondpp(#1)|lrlr|}}%
\def\diamondpp(#1)|#2|{\ifnextchar/{\diamondppp(#1)|#2|}%
    {\diamondppp(#1)|#2|/>`>`>`>/}}%
\def\diamondppp(#1)|#2|/#3`#4`#5`#6/{%
    \ifnextchar<{\diamondpppp(#1)|#2|/#3`#4`#5`#6/}%
    {\diamondpppp(#1)|#2|/#3`#4`#5`#6/<400,400>}}%
\def\ptrianglepppp(#1,#2)|#3|/#4`#5`#6/<#7>[#8]{%
\xpos#1\ypos#2%
\def\next|##1##2##3|{\def\xa{##1}\def\xb{##2}\def\xc{##3}}%
\next|#3|%
\def\next<##1,##2>{\deltax=##1\deltay=##2\ignorespaces}%
\next<#7>%
\def\next[##1`##2`##3;##4`##5`##6]{%
    \def\nodea{##1}\def\nodeb{##2}\def\nodec{##3}%
    \def\labela{##4}\def\labelb{##5}\def\labelc{##6}}%
\next[#8]%
\advance\ypos by \deltay%
\morphism(\xpos,\ypos)|\xa|/{#4}/<\deltax,0>[\nodea`\nodeb;\labela]%
\morphism(\xpos,\ypos)|\xb|/{#5}/<0,-\deltay>[\nodea`\nodec;\labelb]%
\advance\xpos by \deltax%
\morphism(\xpos,\ypos)|\xc|/{#6}/<-\deltax,-\deltay>[\nodeb`\nodec;\labelc]%
\ignorespaces}%
\def\qtrianglepppp(#1,#2)|#3|/#4`#5`#6/<#7>[#8]{%
\xpos#1\ypos#2%
\def\next|##1##2##3|{\def\xa{##1}\def\xb{##2}\def\xc{##3}}%
\next|#3|%
\def\next<##1,##2>{\deltax=##1\deltay=##2\ignorespaces}%
\next<#7>%
\def\next[##1`##2`##3;##4`##5`##6]{%
    \def\nodea{##1}\def\nodeb{##2}\def\nodec{##3}%
    \def\labela{##4}\def\labelb{##5}\def\labelc{##6}}%
\next[#8]%
\advance\ypos by \deltay%
\morphism(\xpos,\ypos)|\xa|/{#4}/<\deltax,0>[\nodea`\nodeb;\labela]%
\morphism(\xpos,\ypos)|\xb|/{#5}/<\deltax,-\deltay>[\nodea`\nodec;\labelb]%
\advance\xpos by \deltax%
\morphism(\xpos,\ypos)|\xc|/{#6}/<0,-\deltay>[\nodeb`\nodec;\labelc]%
\ignorespaces}%
\def\dtrianglepppp(#1,#2)|#3|/#4`#5`#6/<#7>[#8]{%
\xpos#1\ypos#2%
\def\next|##1##2##3|{\def\xa{##1}\def\xb{##2}\def\xc{##3}}%
\next|#3|%
\def\next<##1,##2>{\deltax=##1\deltay=##2\ignorespaces}%
\next<#7>%
\def\next[##1`##2`##3;##4`##5`##6]{%
    \def\nodea{##1}\def\nodeb{##2}\def\nodec{##3}%
    \def\labela{##4}\def\labelb{##5}\def\labelc{##6}}%
\next[#8]%
\morphism(\xpos,\ypos)|\xc|/{#6}/<\deltax,0>[\nodeb`\nodec;\labelc]%
\advance\ypos by \deltay\advance \xpos by \deltax%
\morphism(\xpos,\ypos)|\xa|/{#4}/<-\deltax,-\deltay>[\nodea`\nodeb;\labela]%
\morphism(\xpos,\ypos)|\xb|/{#5}/<0,-\deltay>[\nodea`\nodec;\labelb]%
\ignorespaces}%
\def\btrianglepppp(#1,#2)|#3|/#4`#5`#6/<#7>[#8]{%
\xpos#1\ypos#2%
\def\next|##1##2##3|{\def\xa{##1}\def\xb{##2}\def\xc{##3}}%
\next|#3|%
\def\next<##1,##2>{\deltax=##1\deltay=##2\ignorespaces}%
\next<#7>%
\def\next[##1`##2`##3;##4`##5`##6]{%
    \def\nodea{##1}\def\nodeb{##2}\def\nodec{##3}%
    \def\labela{##4}\def\labelb{##5}\def\labelc{##6}}%
\next[#8]%
\morphism(\xpos,\ypos)|\xc|/{#6}/<\deltax,0>[\nodeb`\nodec;\labelc]%
\advance\ypos by \deltay%
\morphism(\xpos,\ypos)|\xa|/{#4}/<0,-\deltay>[\nodea`\nodeb;\labela]%
\morphism(\xpos,\ypos)|\xb|/{#5}/<\deltax,-\deltay>[\nodea`\nodec;\labelb]%
\ignorespaces}%
\def\Atrianglepppp(#1,#2)|#3|/#4`#5`#6/<#7>[#8]{%
\xpos#1\ypos#2%
\def\next|##1##2##3|{\def\xa{##1}\def\xb{##2}\def\xc{##3}}%
\next|#3|%
\def\next<##1,##2>{\deltax=##1\deltay=##2\ignorespaces}%
\next<#7>%
\def\next[##1`##2`##3;##4`##5`##6]{%
    \def\nodea{##1}\def\nodeb{##2}\def\nodec{##3}%
    \def\labela{##4}\def\labelb{##5}\def\labelc{##6}}%
\next[#8]%
\multiply\deltax by 2%
\morphism(\xpos,\ypos)|\xc|/{#6}/<\deltax,0>[\nodeb`\nodec;\labelc]%
\divide\deltax by 2%
\advance\ypos by \deltay\advance\xpos by \deltax%
\morphism(\xpos,\ypos)|\xa|/{#4}/<-\deltax,-\deltay>[\nodea`\nodeb;\labela]%
\morphism(\xpos,\ypos)|\xb|/{#5}/<\deltax,-\deltay>[\nodea`\nodec;\labelb]%
\ignorespaces}%
\def\Vtrianglepppp(#1,#2)|#3|/#4`#5`#6/<#7>[#8]{%
\xpos#1\ypos#2%
\def\next|##1##2##3|{\def\xa{##1}\def\xb{##2}\def\xc{##3}}%
\next|#3|%
\def\next<##1,##2>{\deltax=##1\deltay=##2\ignorespaces}%
\next<#7>%
\def\next[##1`##2`##3;##4`##5`##6]{%
    \def\nodea{##1}\def\nodeb{##2}\def\nodec{##3}%
    \def\labela{##4}\def\labelb{##5}\def\labelc{##6}}%
\next[#8]%
\advance\ypos by \deltay%
\morphism(\xpos,\ypos)|\xb|/{#5}/<\deltax,-\deltay>[\nodea`\nodec;\labelb]%
\multiply\deltax by 2%
\morphism(\xpos,\ypos)|\xa|/{#4}/<\deltax,0>[\nodea`\nodeb;\labela]%
\advance\xpos by \deltax \divide \deltax by 2%
\morphism(\xpos,\ypos)|\xc|/{#6}/<-\deltax,-\deltay>[\nodeb`\nodec;\labelc]%
\ignorespaces}%
\def\Ctrianglepppp(#1,#2)|#3|/#4`#5`#6/<#7>[#8]{%
\xpos#1\ypos#2%
\def\next|##1##2##3|{\def\xa{##1}\def\xb{##2}\def\xc{##3}}%
\next|#3|%
\def\next<##1,##2>{\deltax=##1\deltay=##2\ignorespaces}%
\next<#7>%
\def\next[##1`##2`##3;##4`##5`##6]{%
    \def\nodea{##1}\def\nodeb{##2}\def\nodec{##3}%
    \def\labela{##4}\def\labelb{##5}\def\labelc{##6}}%
\next[#8]%
\advance \ypos by \deltay%
\morphism(\xpos,\ypos)|\xc|/{#6}/<\deltax,-\deltay>[\nodeb`\nodec;\labelc]%
\advance\ypos by \deltay \advance \xpos by \deltax%
\morphism(\xpos,\ypos)|\xa|/{#4}/<-\deltax,-\deltay>[\nodea`\nodeb;\labela]%
\multiply\deltay by 2%
\morphism(\xpos,\ypos)|\xb|/{#5}/<0,-\deltay>[\nodea`\nodec;\labelb]%
\ignorespaces}%
\def\Dtrianglepppp(#1,#2)|#3|/#4`#5`#6/<#7>[#8]{%
\xpos#1\ypos#2%
\def\next|##1##2##3|{\def\xa{##1}\def\xb{##2}\def\xc{##3}}%
\next|#3|%
\def\next<##1,##2>{\deltax=##1\deltay=##2\ignorespaces}%
\next<#7>%
\def\next[##1`##2`##3;##4`##5`##6]{%
    \def\nodea{##1}\def\nodeb{##2}\def\nodec{##3}%
    \def\labela{##4}\def\labelb{##5}\def\labelc{##6}}%
\next[#8]%
\advance\xpos by \deltax \advance\ypos by \deltay%
\morphism(\xpos,\ypos)|\xc|/{#6}/<-\deltax,-\deltay>[\nodeb`\nodec;\labelc]%
\advance\xpos by -\deltax \advance\ypos by \deltay%
\morphism(\xpos,\ypos)|\xb|/{#5}/<\deltax,-\deltay>[\nodea`\nodeb;\labelb]%
\multiply \deltay by 2%
\morphism(\xpos,\ypos)|\xa|/{#4}/<0,-\deltay>[\nodea`\nodec;\labela]%
\ignorespaces}%
\def\ptrianglep(#1){\ifnextchar|{\ptrianglepp(#1)}{\ptrianglepp(#1)|alr|}}%
\def\ptrianglepp(#1)|#2|{\ifnextchar/{\ptriangleppp(#1)|#2|}%
    {\ptriangleppp(#1)|#2|/>`>`>/}}%
\def\ptriangleppp(#1)|#2|/#3`#4`#5/{%
    \ifnextchar<{\ptrianglepppp(#1)|#2|/#3`#4`#5/}%
    {\ptrianglepppp(#1)|#2|/#3`#4`#5/<\default,\default>}}%
\def\qtrianglep(#1){\ifnextchar|{\qtrianglepp(#1)}{\qtrianglepp(#1)|alr|}}%
\def\qtrianglepp(#1)|#2|{\ifnextchar/{\qtriangleppp(#1)|#2|}%
    {\qtriangleppp(#1)|#2|/>`>`>/}}%
\def\qtriangleppp(#1)|#2|/#3`#4`#5/{%
    \ifnextchar<{\qtrianglepppp(#1)|#2|/#3`#4`#5/}%
    {\qtrianglepppp(#1)|#2|/#3`#4`#5/<\default,\default>}}%
\def\dtrianglep(#1){\ifnextchar|{\dtrianglepp(#1)}{\dtrianglepp(#1)|lrb|}}%
\def\dtrianglepp(#1)|#2|{\ifnextchar/{\dtriangleppp(#1)|#2|}%
    {\dtriangleppp(#1)|#2|/>`>`>/}}%
\def\dtriangleppp(#1)|#2|/#3`#4`#5/{%
    \ifnextchar<{\dtrianglepppp(#1)|#2|/#3`#4`#5/}%
    {\dtrianglepppp(#1)|#2|/#3`#4`#5/<\default,\default>}}%
\def\btrianglep(#1){\ifnextchar|{\btrianglepp(#1)}{\btrianglepp(#1)|lrb|}}%
\def\btrianglepp(#1)|#2|{\ifnextchar/{\btriangleppp(#1)|#2|}%
    {\btriangleppp(#1)|#2|/>`>`>/}}%
\def\btriangleppp(#1)|#2|/#3`#4`#5/{%
    \ifnextchar<{\btrianglepppp(#1)|#2|/#3`#4`#5/}%
    {\btrianglepppp(#1)|#2|/#3`#4`#5/<\default,\default>}}%
\def\Atriangle{\ifnextchar({\Atrianglep}{\Atrianglep(0,0)}}%
\def\Atrianglep(#1){\ifnextchar|{\Atrianglepp(#1)}{\Atrianglepp(#1)|lrb|}}%
\def\Atrianglepp(#1)|#2|{\ifnextchar/{\Atriangleppp(#1)|#2|}%
    {\Atriangleppp(#1)|#2|/>`>`>/}}%
\def\Atriangleppp(#1)|#2|/#3`#4`#5/{%
    \ifnextchar<{\Atrianglepppp(#1)|#2|/#3`#4`#5/}%
    {\Atrianglepppp(#1)|#2|/#3`#4`#5/<\default,\default>}}%
\def\Vtrianglep(#1){\ifnextchar|{\Vtrianglepp(#1)}{\Vtrianglepp(#1)|alb|}}%
\def\Vtrianglepp(#1)|#2|{\ifnextchar/{\Vtriangleppp(#1)|#2|}%
    {\Vtriangleppp(#1)|#2|/>`>`>/}}%
\def\Vtriangleppp(#1)|#2|/#3`#4`#5/{%
    \ifnextchar<{\Vtrianglepppp(#1)|#2|/#3`#4`#5/}%
    {\Vtrianglepppp(#1)|#2|/#3`#4`#5/<\default,\default>}}%
\def\Ctriangle{\ifnextchar({\Ctrianglep}{\Ctrianglep(0,0)}}%
\def\Ctrianglep(#1){\ifnextchar|{\Ctrianglepp(#1)}{\Ctrianglepp(#1)|arb|}}%
\def\Ctrianglepp(#1)|#2|{\ifnextchar/{\Ctriangleppp(#1)|#2|}%
    {\Ctriangleppp(#1)|#2|/>`>`>/}}%
\def\Ctriangleppp(#1)|#2|/#3`#4`#5/{%
    \ifnextchar<{\Ctrianglepppp(#1)|#2|/#3`#4`#5/}%
    {\Ctrianglepppp(#1)|#2|/#3`#4`#5/<\default,\default>}}%
\def\Dtriangle{\ifnextchar({\Dtrianglep}{\Dtrianglep(0,0)}}%
\def\Dtrianglep(#1){\ifnextchar|{\Dtrianglepp(#1)}{\Dtrianglepp(#1)|lab|}}%
\def\Dtrianglepp(#1)|#2|{\ifnextchar/{\Dtriangleppp(#1)|#2|}%
    {\Dtriangleppp(#1)|#2|/>`>`>/}}%
\def\Dtriangleppp(#1)|#2|/#3`#4`#5/{%
    \ifnextchar<{\Dtrianglepppp(#1)|#2|/#3`#4`#5/}%
    {\Dtrianglepppp(#1)|#2|/#3`#4`#5/<\default,\default>}}%
\def\Atrianglepairpppp(#1)|#2|/#3`#4`#5`#6`#7/<#8>[#9]{%
\def\next(##1,##2){\xpos##1\ypos##2}%
\next(#1)%
\def\next|##1##2##3##4##5|{\def\xa{##1}\def\xb{##2}%
\def\xc{##3}\def\xd{##4}\def\xe{##5}}%
\next|#2|%
\def\next<##1,##2>{\deltax=##1\deltay=##2\ignorespaces}%
\next<#8>%
\def\next[##1`##2`##3`##4;##5`##6`##7`##8`##9]{%
 \def\nodea{##1}\def\nodeb{##2}\def\nodec{##3}\def\noded{##4}%
 \def\labela{##5}\def\labelb{##6}\def\labelc{##7}\def\labeld{##8}\def\labele{##9}}%
\next[#9]%
\morphism(\xpos,\ypos)|\xd|/{#6}/<\deltax,0>[\nodeb`\nodec;\labeld]%
\advance\xpos by \deltax%
\morphism(\xpos,\ypos)|\xe|/{#7}/<\deltax,0>[\nodec`\noded;\labele]%
\advance\ypos by \deltay%
\morphism(\xpos,\ypos)|\xa|/{#3}/<-\deltax,-\deltay>[\nodea`\nodeb;\labela]%
\morphism(\xpos,\ypos)|\xb|/{#4}/<0,-\deltay>[\nodea`\nodec;\labelb]%
\morphism(\xpos,\ypos)|\xc|/{#5}/<\deltax,-\deltay>[\nodea`\noded;\labelc]%
\ignorespaces}%
\def\Vtrianglepairpppp(#1)|#2|/#3`#4`#5`#6`#7/<#8>[#9]{%
\def\next(##1,##2){\xpos##1\ypos##2}%
\next(#1)%
\def\next|##1##2##3##4##5|{\def\xa{##1}\def\xb{##2}%
\def\xc{##3}\def\xd{##4}\def\xe{##5}}%
\next|#2|%
\def\next<##1,##2>{\deltax=##1\deltay=##2\ignorespaces}%
\next<#8>%
\def\next[##1`##2`##3`##4;##5`##6`##7`##8`##9]{%
 \def\nodea{##1}\def\nodeb{##2}\def\nodec{##3}\def\noded{##4}%
 \def\labela{##5}\def\labelb{##6}\def\labelc{##7}\def\labeld{##8}\def\labele{##9}}%
\next[#9]%
\advance\ypos by \deltay%
\morphism(\xpos,\ypos)|\xa|/{#3}/<\deltax,0>[\nodea`\nodeb;\labela]%
\morphism(\xpos,\ypos)|\xc|/{#5}/<\deltax,-\deltay>[\nodea`\noded;\labelc]%
\advance\xpos by \deltax%
\morphism(\xpos,\ypos)|\xb|/{#4}/<\deltax,0>[\nodeb`\nodec;\labelb]%
\morphism(\xpos,\ypos)|\xd|/{#6}/<0,-\deltay>[\nodeb`\noded;\labeld]%
\advance\xpos by \deltax%
\morphism(\xpos,\ypos)|\xe|/{#7}/<-\deltax,-\deltay>[\nodec`\noded;\labele]%
\ignorespaces}%
\def\Ctrianglepairpppp(#1)|#2|/#3`#4`#5`#6`#7/<#8>[#9]{%
\def\next(##1,##2){\xpos##1\ypos##2}%
\next(#1)%
\def\next|##1##2##3##4##5|{\def\xa{##1}\def\xb{##2}%
\def\xc{##3}\def\xd{##4}\def\xe{##5}}%
\next|#2|%
\def\next<##1,##2>{\deltax=##1\deltay=##2\ignorespaces}%
\next<#8>%
\def\next[##1`##2`##3`##4;##5`##6`##7`##8`##9]{%
 \def\nodea{##1}\def\nodeb{##2}\def\nodec{##3}\def\noded{##4}%
 \def\labela{##5}\def\labelb{##6}\def\labelc{##7}\def\labeld{##8}\def\labele{##9}}%
\next[#9]%
\advance\ypos by \deltay%
\morphism(\xpos,\ypos)|\xe|/{#7}/<0,-\deltay>[\nodec`\noded;\labele]%
\advance\xpos by -\deltax%
\morphism(\xpos,\ypos)|\xc|/{#5}/<\deltax,0>[\nodeb`\nodec;\labelc]%
\morphism(\xpos,\ypos)|\xd|/{#6}/<\deltax,-\deltay>[\nodeb`\noded;\labeld]%
\advance\ypos by \deltay%
\advance\xpos by \deltax%
\morphism(\xpos,\ypos)|\xa|/{#3}/<-\deltax,-\deltay>[\nodea`\nodeb;\labela]%
\morphism(\xpos,\ypos)|\xb|/{#4}/<0,-\deltay>[\nodea`\nodec;\labelb]%
\ignorespaces}%
\def\Dtrianglepairpppp(#1)|#2|/#3`#4`#5`#6`#7/<#8>[#9]{%
\def\next(##1,##2){\xpos##1\ypos##2}%
\next(#1)%
\def\next|##1##2##3##4##5|{\def\xa{##1}\def\xb{##2}%
\def\xc{##3}\def\xd{##4}\def\xe{##5}}%
\next|#2|%
\def\next<##1,##2>{\deltax=##1\deltay=##2\ignorespaces}%
\next<#8>%
\def\next[##1`##2`##3`##4;##5`##6`##7`##8`##9]{%
 \def\nodea{##1}\def\nodeb{##2}\def\nodec{##3}\def\noded{##4}%
 \def\labela{##5}\def\labelb{##6}\def\labelc{##7}\def\labeld{##8}\def\labele{##9}}%
\next[#9]%
\advance\ypos by \deltay%
\morphism(\xpos,\ypos)|\xc|/{#5}/<\deltax,0>[\nodeb`\nodec;\labelc]%
\morphism(\xpos,\ypos)|\xd|/{#6}/<0,-\deltay>[\nodeb`\noded;\labeld]%
\advance\ypos by \deltay%
\morphism(\xpos,\ypos)|\xa|/{#3}/<0,-\deltay>[\nodea`\nodeb;\labela]%
\morphism(\xpos,\ypos)|\xb|/{#4}/<\deltax,-\deltay>[\nodea`\nodec;\labelb]%
\advance\ypos by -\deltay%
\advance\xpos by \deltax%
\morphism(\xpos,\ypos)|\xe|/{#7}/<-\deltax,-\deltay>[\nodec`\noded;\labele]%
\ignorespaces}%
\def\Atrianglepairp(#1){\ifnextchar|{\Atrianglepairpp(#1)}%
{\Atrianglepairpp(#1)|lmrbb|}}%
\def\Atrianglepairpp(#1)|#2|{\ifnextchar/{\Atrianglepairppp(#1)|#2|}%
    {\Atrianglepairppp(#1)|#2|/>`>`>`>`>/}}%
\def\Atrianglepairppp(#1)|#2|/#3`#4`#5`#6`#7/{%
    \ifnextchar<{\Atrianglepairpppp(#1)|#2|/#3`#4`#5`#6`#7/}%
    {\Atrianglepairpppp(#1)|#2|/#3`#4`#5`#6`#7/<\default,\default>}}%
\def\Vtrianglepairp(#1){\ifnextchar|{\Vtrianglepairpp(#1)}%
{\Vtrianglepairpp(#1)|aalmr|}}%
\def\Vtrianglepairpp(#1)|#2|{\ifnextchar/{\Vtrianglepairppp(#1)|#2|}%
    {\Vtrianglepairppp(#1)|#2|/>`>`>`>`>/}}%
\def\Vtrianglepairppp(#1)|#2|/#3`#4`#5`#6`#7/{%
    \ifnextchar<{\Vtrianglepairpppp(#1)|#2|/#3`#4`#5`#6`#7/}%
    {\Vtrianglepairpppp(#1)|#2|/#3`#4`#5`#6`#7/<\default,\default>}}%
\def\Ctrianglepairp(#1){\ifnextchar|{\Ctrianglepairpp(#1)}%
{\Ctrianglepairpp(#1)|lrmlr|}}%
\def\Ctrianglepairpp(#1)|#2|{\ifnextchar/{\Ctrianglepairppp(#1)|#2|}%
    {\Ctrianglepairppp(#1)|#2|/>`>`>`>`>/}}%
\def\Ctrianglepairppp(#1)|#2|/#3`#4`#5`#6`#7/{%
    \ifnextchar<{\Ctrianglepairpppp(#1)|#2|/#3`#4`#5`#6`#7/}%
    {\Ctrianglepairpppp(#1)|#2|/#3`#4`#5`#6`#7/<\default,\default>}}%
\def\Dtrianglepairp(#1){\ifnextchar|{\Dtrianglepairpp(#1)}%
{\Dtrianglepairpp(#1)|lrmlr|}}%
\def\Dtrianglepairpp(#1)|#2|{\ifnextchar/{\Dtrianglepairppp(#1)|#2|}%
    {\Dtrianglepairppp(#1)|#2|/>`>`>`>`>/}}%
\def\Dtrianglepairppp(#1)|#2|/#3`#4`#5`#6`#7/{%
    \ifnextchar<{\Dtrianglepairpppp(#1)|#2|/#3`#4`#5`#6`#7/}%
    {\Dtrianglepairpppp(#1)|#2|/#3`#4`#5`#6`#7/<\default,\default>}}%
\def\pplace[#1](#2,#3)[#4]{\POS(#2,#3)*+!!<0ex,\axis>!#1{#4}\ignorespaces}%
\def\cplace(#1,#2)[#3]{\POS(#1,#2)*+!!<0ex,\axis>{#3}\ignorespaces}%
\def\pullback#1]#2]{\square#1]\trident#2]\ignorespaces}%
\def\tridentppp|#1#2#3|/#4`#5`#6/<#7,#8>[#9]{%
\def\next[##1;##2`##3`##4]{\def\nodee{##1}\def\labele{##2}%
   \def\labelf{##3}\def\labelg{##4}}%
\next[#9]%
\advance \xpos by -\deltax%
\advance \xpos by -#7\advance \ypos by #8%
\advance\deltax by #7%
\morphism(\xpos,\ypos)|#1|/{#4}/<\deltax,-#8>[\nodee`\nodeb;\labele]%
\advance\deltax by -#7%
\morphism(\xpos,\ypos)|#2|/{#5}/<#7,-#8>[\nodee`\nodea;\labelf]%
\advance\deltay by #8%
\morphism(\xpos,\ypos)|#3|/{#6}/<#7,-\deltay>[\nodee`\nodec;\labelg]%
\ignorespaces}%
\def\trident{\ifnextchar|{\tridentp}{\tridentp|amb|}}%
\def\tridentp|#1|{\ifnextchar/{\tridentpp|#1|}{\tridentpp|#1|/{>}`{>}`{>}/}}%
\def\tridentpp|#1|/#2/{\ifnextchar<{\tridentppp|#1|/#2/}%
  {\tridentppp|#1|/#2/<500,500>}}%
\def\setmorphismwidth#1#2#3#4{%
 \setbox0=\hbox{$#1{\labelstyle#3#3}#2$}#4=\wd0%
 \divide #4 by 2 \divide #4 by \ul%
 \advance #4 by 350 \ratchet{#4}{500}}%
\def\setSquarewidth[#1`#2`#3`#4;#5`#6`#7`#8]{%
 \setmorphismwidth{#1}{#2}{#5}{\topw}%
 \setmorphismwidth{#3}{#4}{#8}{\botw}%
\ratchet{\topw}{\botw}}%
\def\Squarepppp(#1)|#2|/#3/<#4>[#5]{%
 \setSquarewidth[#5]%
 \squarepppp(#1)|#2|/#3/<\topw,#4>[#5]%
\ignorespaces}%
\def\Squarep(#1){\ifnextchar|{\Squarepp(#1)}{\Squarepp(#1)|alrb|}}%
\def\Squarepp(#1)|#2|{\ifnextchar/{\Squareppp(#1)|#2|}%
    {\Squareppp(#1)|#2|/>`>`>`>/}}%
\def\Squareppp(#1)|#2|/#3`#4`#5`#6/{%
    \ifnextchar<{\Squarepppp(#1)|#2|/#3`#4`#5`#6/}%
    {\Squarepppp(#1)|#2|/#3`#4`#5`#6/<\default>}}%
\def\hsquarespppp(#1,#2)|#3|/#4/<#5>[#6;#7]{%
\Xpos=#1\Ypos=#2%
\def\next|##1##2##3##4##5##6##7|{%
 \def\Xa{##1}\def\Xb{##2}\def\Xc{##3}\def\Xd{##4}%
 \def\Xe{##5}\def\Xf{##6}\def\Xg{##7}}%
\next|#3|%
\def\next<##1,##2,##3>{\deltaX=##1\deltaXprime=##2\deltaY=##3}%
\next<#5>%
\def\next[##1`##2`##3`##4`##5`##6]{%
 \def\Nodea{##1}\def\Nodeb{##2}\def\Nodec{##3}%
 \def\Noded{##4}\def\Nodee{##5}\def\Nodef{##6}}%
\next[#6]%
\def\next[##1`##2`##3`##4`##5`##6`##7]{%
 \def\Labela{##1}\def\Labelb{##2}\def\Labelc{##3}\def\Labeld{##4}%
 \def\Labele{##5}\def\Labelf{##6}\def\Labelg{##7}}%
\next[#7]%
\dohsquares/#4/}%
\def\dohsquares/#1`#2`#3`#4`#5`#6`#7/{%
\squarepppp(\Xpos,\Ypos)|\Xa\Xc\Xd\Xf|/#1`#3`#4`#6/<\deltaX,\deltaY>%
 [\Nodea`\Nodeb`\Noded`\Nodee;\Labela`\Labelc`\Labeld`\Labelf]%
 \advance \Xpos by \deltaX%
\squarepppp(\Xpos,\Ypos)|\Xb\Xd\Xe\Xg|/#2``#5`#7/<\deltaXprime,\deltaY>%
[\Nodeb`\Nodec`\Nodee`\Nodef;\Labelb``\Labele`\Labelg]%
\ignorespaces}%
\def\hsquaresp(#1){\ifnextchar|{\hsquarespp(#1)}{\hsquarespp%
(#1)|aalmrbb|}}%
\def\hsquarespp(#1)|#2|{\ifnextchar/{\hsquaresppp(#1)|#2|}%
    {\hsquaresppp(#1)|#2|/>`>`>`>`>`>`>/}}%
\def\hsquaresppp(#1)|#2|/#3/{%
    \ifnextchar<{\hsquarespppp(#1)|#2|/#3/}%
    {\hsquarespppp(#1)|#2|/#3/<\default,\default,\default>}}%
\def\hSquarespppp(#1,#2)|#3|/#4/<#5>[#6;#7]{%
\Xpos=#1\Ypos=#2%
\def\next|##1##2##3##4##5##6##7|{%
 \def\Xa{##1}\def\Xb{##2}\def\Xc{##3}\def\Xd{##4}%
 \def\Xe{##5}\def\Xf{##6}\def\Xg{##7}}%
\next|#3|%
\deltaY=#5%
\def\next[##1`##2`##3`##4`##5`##6]{%
 \def\Nodea{##1}\def\Nodeb{##2}\def\Nodec{##3}%
 \def\Noded{##4}\def\Nodee{##5}\def\Nodef{##6}}%
\next[#6]%
\def\next[##1`##2`##3`##4`##5`##6`##7]{%
 \def\Labela{##1}\def\Labelb{##2}\def\Labelc{##3}\def\Labeld{##4}%
 \def\Labele{##5}\def\Labelf{##6}\def\Labelg{##7}}%
\next[#7]%
\dohSquares/#4/}%
\def\dohSquares/#1`#2`#3`#4`#5`#6`#7/{%
\Squarepppp(\Xpos,\Ypos)|\Xa\Xc\Xd\Xf|/#1`#3`#4`#6/<\deltaY>%
 [\Nodea`\Nodeb`\Noded`\Nodee;\Labela`\Labelc`\Labeld`\Labelf]%
 \advance \Xpos by \topw%
\Squarepppp(\Xpos,\Ypos)|\Xb\Xd\Xe\Xg|/#2``#5`#7/<\deltaY>%
[\Nodeb`\Nodec`\Nodee`\Nodef;\Labelb``\Labele`\Labelg]%
\ignorespaces}%
\def\hSquaresp(#1){\ifnextchar|{\hSquarespp(#1)}{\hSquarespp%
(#1)|aalmrbb|}}%
\def\hSquarespp(#1)|#2|{\ifnextchar/{\hSquaresppp(#1)|#2|}%
    {\hSquaresppp(#1)|#2|/>`>`>`>`>`>`>/}}%
\def\hSquaresppp(#1)|#2|/#3/{%
    \ifnextchar<{\hSquarespppp(#1)|#2|/#3/}%
    {\hSquarespppp(#1)|#2|/#3/<\default>}}%
\def\vsquarespppp(#1,#2)|#3|/#4/<#5>[#6;#7]{%
\Xpos=#1\Ypos=#2%
\def\next|##1##2##3##4##5##6##7|{%
 \def\Xa{##1}\def\Xb{##2}\def\Xc{##3}\def\Xd{##4}%
 \def\Xe{##5}\def\Xf{##6}\def\Xg{##7}}%
\next|#3|%
\def\next<##1,##2,##3>{\deltaX=##1\deltaY=##2\deltaYprime=##3}%
\next<#5>%
\def\next[##1`##2`##3`##4`##5`##6]{%
 \def\Nodea{##1}\def\Nodeb{##2}\def\Nodec{##3}%
 \def\Noded{##4}\def\Nodee{##5}\def\Nodef{##6}}%
\next[#6]%
\def\next[##1`##2`##3`##4`##5`##6`##7]{%
 \def\Labela{##1}\def\Labelb{##2}\def\Labelc{##3}\def\Labeld{##4}%
 \def\Labele{##5}\def\Labelf{##6}\def\Labelg{##7}}%
\next[#7]%
\dovsquares/#4/}%
\def\dovsquares/#1`#2`#3`#4`#5`#6`#7/{%
\squarepppp(\Xpos,\Ypos)|\Xd\Xe\Xf\Xg|/`#5`#6`#7/<\deltaX,\deltaYprime>%
[\Nodec`\Noded`\Nodee`\Nodef;`\Labele`\Labelf`\Labelg]%
 \advance\Ypos by \deltaYprime%
\squarepppp(\Xpos,\Ypos)|\Xa\Xb\Xc\Xd|/#1`#2`#3`#4/<\deltaX,\deltaY>%
 [\Nodea`\Nodeb`\Nodec`\Noded;\Labela`\Labelb`\Labelc`\Labeld]%
\ignorespaces}%
\def\vsquaresp(#1){\ifnextchar|{\vsquarespp(#1)}{\vsquarespp%
(#1)|aalmrbb|}}%
\def\vsquarespp(#1)|#2|{\ifnextchar/{\vsquaresppp(#1)|#2|}%
    {\vsquaresppp(#1)|#2|/>`>`>`>`>`>`>/}}%
\def\vsquaresppp(#1)|#2|/#3/{%
    \ifnextchar<{\vsquarespppp(#1)|#2|/#3/}%
    {\vsquarespppp(#1)|#2|/#3/<\default,\default,\default>}}%
\def\vSquarespppp(#1,#2)|#3|/#4/<#5,#6>[#7;#8]{%
\Xpos=#1\Ypos=#2%
\def\next|##1##2##3##4##5##6##7|{%
 \def\Xa{##1}\def\Xb{##2}\def\Xc{##3}\def\Xd{##4}%
 \def\Xe{##5}\def\Xf{##6}\def\Xg{##7}}%
\next|#3|%
\deltaX=#5%
\deltaY=#6%
\def\next[##1`##2`##3`##4`##5`##6]{%
 \def\Nodea{##1}\def\Nodeb{##2}\def\Nodec{##3}%
 \def\Noded{##4}\def\Nodee{##5}\def\Nodef{##6}}%
\next[#7]%
\def\next[##1`##2`##3`##4`##5`##6`##7]{%
 \def\Labela{##1}\def\Labelb{##2}\def\Labelc{##3}\def\Labeld{##4}%
 \def\Labele{##5}\def\Labelf{##6}\def\Labelg{##7}}%
\next[#8]%
\dovSquares/#4/\ignorespaces}%
\def\dovSquares/#1`#2`#3`#4`#5`#6`#7/{%
\setmorphismwidth{\Nodea}{\Nodeb}{\Labela}{\topw}%
\setmorphismwidth{\Nodec}{\Noded}{\Labeld}{\botw}%
\ratchet{\topw}{\botw}%
\setmorphismwidth{\Nodee}{\Nodef}{\Labelg}{\botw}%
\ratchet{\topw}{\botw}%
\square(\Xpos,\Ypos)|\Xd\Xe\Xf\Xg|/`#5`#6`#7/<\topw,\deltaX>%
 [\Nodec`\Noded`\Nodee`\Nodef;`\Labele`\Labelf`\Labelg]%
\advance \Ypos by \deltaX%
\square(\Xpos,\Ypos)|\Xa\Xb\Xc\Xd|/#1`#2`#3`#4/<\topw,\deltaY>%
 [\Nodea`\Nodeb`\Nodec`\Noded;\Labela`\Labelb`\Labelc`\Labeld]%
}%
\def\vSquaresp(#1){\ifnextchar|{\vSquarespp(#1)}{\vSquarespp%
(#1)|alrmlrb|}}%
\def\vSquarespp(#1)|#2|{\ifnextchar/{\vSquaresppp(#1)|#2|}%
    {\vSquaresppp(#1)|#2|/>`>`>`>`>`>`>/}}%
\def\vSquaresppp(#1)|#2|/#3/{%
    \ifnextchar<{\vSquarespppp(#1)|#2|/#3/}%
    {\vSquarespppp(#1)|#2|/#3/<\default,\default>}}%
\def\osquarepppp(#1)|#2|/#3`#4`#5`#6/<#7>[#8]{\squarepppp%
 (#1)|#2|/#3`#4`#5`#6/<#7>[#8]%
 \let\Nodea\nodea\let\Nodeb\nodeb%
\let\Nodec\nodec\let\Noded\noded\Xpos=\xpos\Ypos=\ypos%
\deltaX=\deltax \deltaY=\deltay \isquare}%
\def\osquarep(#1){\ifnextchar|{\osquarepp(#1)}{\osquarepp(#1)|alrb|}}%
\def\osquarepp(#1)|#2|{\ifnextchar/{\osquareppp(#1)|#2|}%
    {\osquareppp(#1)|#2|/>`>`>`>/}}%
\def\osquareppp(#1)|#2|/#3`#4`#5`#6/{%
    \ifnextchar<{\osquarepppp(#1)|#2|/#3`#4`#5`#6/}%
    {\osquarepppp(#1)|#2|/#3`#4`#5`#6/<1500,1500>}}%
\def\isquarepppp(#1)|#2|/#3`#4`#5`#6/<#7>[#8]{%
 \squarepppp(#1)|#2|/#3`#4`#5`#6/<#7>[#8]%
\ifnextchar|{\cubep}{\cubep|mmmm|}}%
\def\cubep|#1|{\ifnextchar/{\cubepp|#1|}{\cubepp|#1|/>`>`>`>/}}%
\def\isquare{\ifnextchar({\isquarep}{\isquarep(\default,\default)}}%
\def\isquarep(#1){\ifnextchar|{\isquarepp(#1)}{\isquarepp(#1)|alrb|}}%
\def\isquarepp(#1)|#2|{\ifnextchar/{\isquareppp(#1)|#2|}%
    {\isquareppp(#1)|#2|/>`>`>`>/}}%
\def\isquareppp(#1)|#2|/#3`#4`#5`#6/{%
    \ifnextchar<{\isquarepppp(#1)|#2|/#3`#4`#5`#6/}%
    {\isquarepppp(#1)|#2|/#3`#4`#5`#6/<500,500>}}%
\def\cubepp|#1#2#3#4|/#5`#6`#7`#8/[#9]{%
\def\next[##1`##2`##3`##4]{\gdef\Labela{##1}%
\gdef\Labelb{##2}\gdef\Labelc{##3}\gdef\Labeld{##4}}\next[#9]%
\xend\xpos \yend\ypos%
\Xend\xend\advance\Xend by -\Xpos%
\Yend\yend\advance\Yend by -\Ypos%
\domorphism(\Xpos,\Ypos)|#2|/#6/<\Xend,\Yend>[\Nodeb`\nodeb;\Labelb]%
\advance\Xpos by-\deltaX%
\advance\xend by-\deltax%
\Xend\xend\advance\Xend by -\Xpos%
\domorphism(\Xpos,\Ypos)|#1|/#5/<\Xend,\Yend>[\Nodea`\nodea;\Labela]%
\advance\Ypos by-\deltaY%
\advance\yend by-\deltay%
\Yend\yend\advance\Yend by -\Ypos%
\domorphism(\Xpos,\Ypos)|#3|/#7/<\Xend,\Yend>[\Nodec`\nodec;\Labelc]%
\advance\Xpos by\deltaX%
\advance\xend by\deltax%
\Xend\xend\advance\Xend by -\Xpos%
\domorphism(\Xpos,\Ypos)|#4|/#8/<\Xend,\Yend>[\Noded`\noded;\Labeld]%
\ignorespaces}%
\def\setwdth#1#2{\setbox0\hbox{$\labelstyle#1$}\wdth=\wd0%
\setbox0\hbox{$\labelstyle#2$}\ifnum\wdth<\wd0 \wdth=\wd0 \fi}%
\def\topppp/#1/<#2>^#3_#4{\:%
\ifnum#2=0%
   \setwdth{#3}{#4}\deltax=\wdth \divide \deltax by \ul%
   \advance \deltax by \defaultmargin  \ratchet{\deltax}{100}%
\else \deltax #2%
\fi%
\xy\ar@{#1}^{#3}_{#4}(\deltax,0) \endxy%
\:}%
\def\toppp/#1/<#2>^#3{\ifnextchar_{\topppp/#1/<#2>^{#3}}{\topppp/#1/<#2>^{#3}_{}}}%
\def\topp/#1/<#2>{\ifnextchar^{\toppp/#1/<#2>}{\toppp/#1/<#2>^{}}}%
\def\toop/#1/{\ifnextchar<{\topp/#1/}{\topp/#1/<0>}}%
\def\to{\ifnextchar/{\toop}{\toop/>/}}%
\def\rlimto{{%
\font\xyatipfont=xyatip10 scaled 800
\font\xybtipfont=xybtip10 scaled 800
\raise 2pt\hbox{\,\xy\ar@{->}(100,0) \endxy}\,}}
\def\llimto{{%
\font\xyatipfont=xyatip10 scaled 800
\font\xybtipfont=xybtip10 scaled 800
\raise 2pt\hbox{\,\xy\ar@{<-}(100,0) \endxy}\,}}
\def\twopppp/#1`#2/<#3>^#4_#5{\:%
\ifnum0=#3%
  \setwdth{#4}{#5}\deltax=\wdth \divide \deltax by \ul \advance \deltax%
  by \defaultmargin \ratchet{\deltax}{200}%
\else \deltax#3 \fi%
\xy\ar@{#1}@<2.5pt>^{#4}(\deltax,0)%
\ar@{#2}@<-2.5pt>_{#5}(\deltax,0)\endxy\:}%
\def\twoppp/#1`#2/<#3>^#4{\ifnextchar_{\twopppp/#1`#2/<#3>^{#4}}%
  {\twopppp/#1`#2/<#3>^{#4}_{}}}%
\def\twopp/#1`#2/<#3>{\ifnextchar^{\twoppp/#1`#2/<#3>}{\twoppp/#1`#2/<#3>^{}}}%
\def\twop/#1`#2/{\ifnextchar<{\twopp/#1`#2/}{\twopp/#1`#2/<0>}}%
\def\two{\ifnextchar/{\twop}{\twop/>`>/}}%
\def\threeppppp/#1`#2`#3/<#4>^#5|#6_#7{\:%
\ifnum0=#4%
\setbox0\hbox{$\labelstyle#5$}\wdth=\wd0%
\setbox0\hbox{$\labelstyle#6$}\ifnum\wdth<\wd0 \wdth=\wd0 \fi%
\setbox0\hbox{$\labelstyle#7$}\ifnum\wdth<\wd0 \wdth=\wd0 \fi%
\deltax=\wdth \divide \deltax by \ul \advance \deltax by%
\defaultmargin \ratchet{\deltax}{300}%
\else\deltax#4 \fi%
    \xy \ifnum\wd0=0 \ar@{#2}(\deltax,0)%
    \else \ar@{#2}|{#6}(\deltax,0)\fi%
\ar@{#1}@<4.5pt>^{#5}(\deltax,0)%
\ar@{#3}@<-4.5pt>_{#7}(\deltax,0)\endxy\:}%
\def\threepppp/#1`#2`#3/<#4>^#5|#6{\ifnextchar_{\threeppppp%
  /#1`#2`#3/<#4>^{#5}|{#6}}{\threeppppp/#1`#2`#3/<#4>^{#5}|{#6}_{}}}%
\def\threeppp/#1`#2`#3/<#4>^#5{\ifnextchar|{\threepppp%
  /#1`#2`#3/<#4>^{#5}}{\threepppp/#1`#2`#3/<#4>^{#5}|{}}}%
\def\threepp/#1`#2`#3/<#4>{\ifnextchar^{\threeppp/#1`#2`#3/<#4>}%
  {\threeppp/#1`#2`#3/<#4>^{}}}%
\def\threep/#1`#2`#3/{\ifnextchar<{\threepp/#1`#2`#3/}%
  {\threepp/#1`#2`#3/<0>}}%
\def\twoar(#1,#2){{%
 \scalefactor{0.1}%
 \deltax#1\deltay#2%
 \deltaX=\ifnum\deltax<0-\fi\deltax%
 \deltaY=\ifnum\deltay<0-\fi\deltay%
 \Xend\deltax \multiply \Xend by \deltax%
 \Yend\deltay \multiply \Yend by \deltay%
 \advance\Xend by \Yend \multiply \Xend by 3%
 \ifnum \deltaX > \deltaY%
    \multiply \deltaX by 3 \advance \deltaX by \deltaY%
 \else%
    \multiply \deltaY by 3 \advance \deltaX by \deltaY%
 \fi%
 \multiply\deltax by 500%
 \multiply\deltay by 500%
 \xpos\deltax \multiply \xpos by 3 \divide\xpos by \deltaX%
 \Xpos\deltax \multiply \Xpos by \deltaX \divide \Xpos by \Xend%
 \advance \xpos by \Xpos%
 \ypos\deltay \multiply \ypos by 3 \divide\ypos by \deltaX%
 \Ypos\deltay \multiply \Ypos by \deltaX \divide \Ypos by \Xend%
 \advance \ypos by \Ypos%
 \xy \ar@{=>}(\xpos,\ypos) \endxy%
}\ignorespaces}%
\def\iiixiiipppppp(#1,#2)|#3|/#4/<#5>#6<#7>[#8;#9]{%
 \xpos#1\ypos#2\relax%
 \def\next|##1##2##3##4##5##6##7|{\def\xa{##1}\def\xb{##2}%
 \def\xc{##3}\def\xd{##4}\def\xe{##5}\def\xf{##6}\nextt|##7|}%
 \def\nextt|##1##2##3##4##5##6|{\def\xg{##1}\def\xh{##2}%
 \def\xi{##3}\def\xj{##4}\def\xk{##5}\def\xl{##6}}%
 \next|#3|%
 \def\next<##1,##2>{\deltax##1\deltay##2}%
 \next<#5>%
 \def\next<##1,##2>{\deltaX##1\deltaY##2}%
 \next<#7>%
 \def\next##1{\topw##1\relax%
 \ifodd\topw \def\za{}\else\def\za{\relax}\fi \divide\topw by 2
 \ifodd\topw \def\zb{}\else\def\zb{\relax}\fi \divide\topw by 2
 \ifodd\topw \def\zc{}\else\def\zc{\relax}\fi \divide\topw by 2
 \ifodd\topw \def\zd{}\else\def\zd{\relax}\fi \divide\topw by 2
 \ifodd\topw \def\ze{}\else\def\ze{\relax}\fi \divide\topw by 2
 \ifodd\topw \def\zf{}\else\def\zf{\relax}\fi \divide\topw by 2
 \ifodd\topw \def\zg{}\else\def\zg{\relax}\fi \divide\topw by 2
 \ifodd\topw \def\zh{}\else\def\zh{\relax}\fi \divide\topw by 2
 \ifodd\topw \def\zi{}\else\def\zi{\relax}\fi \divide\topw by 2
 \ifodd\topw \def\zj{}\else\def\zj{\relax}\fi \divide\topw by 2
 \ifodd\topw \def\zk{}\else\def\zk{\relax}\fi \divide\topw by 2
 \ifodd\topw \def\zl{}\else\def\zl{\relax}\fi}%
 \next{#6}%
 \def\next[##1`##2`##3`##4`##5`##6`##7`##8`##9]{%
 \def\nodeA{##1}\def\nodeB{##2}\def\nodeC{##3}%
 \def\nodeD{##4}\def\nodeE{##5}\def\nodeF{##6}%
 \def\nodeG{##7}\def\nodeH{##8}\def\nodeI{##9}}%
 \next[#8]%
 \def\next[##1`##2`##3`##4`##5`##6`##7]{%
 \def\labela{##1}\def\labelb{##2}\def\labelc{##3}%
 \def\labeld{##4}\def\labele{##5}\def\labelf{##6}\nextt[##7]}%
 \def\nextt[##1`##2`##3`##4`##5`##6]{%
 \def\labelg{##1}\def\labelh{##2}\def\labeli{##3}%
 \def\labelj{##4}\def\labelk{##5}\def\labell{##6}}%
 \next[#9]%
 \def\next/##1`##2`##3`##4`##5`##6`##7`##8/{%
 \advance\ypos\deltay
    \ifx\zf\empty \morphism(\xpos,\ypos)/<-/<-\deltaX,0>[\nodeD`0;]\fi
 \morphism(\xpos,\ypos)|\xf|/{##6}/<\deltax,0>[\nodeD`\nodeE;\labelf]%
    \advance \xpos\deltax
    \morphism(\xpos,\ypos)|\xg|/{##7}/<\deltax,0>[\nodeE`\nodeF;\labelg]%
    \ifx\zg\empty \advance\xpos \deltax
        \morphism(\xpos,\ypos)<\deltaX,0>[\nodeF`0;]\fi
    \xpos#1 \advance\ypos\deltay
    \ifx\zd\empty \morphism(\xpos,\ypos)/<-/<-\deltaX,0>[\nodeA`0;]\fi
    \ifx\za\empty \morphism(\xpos,\ypos)/<-/<0,\deltaY>[\nodeA`0;]\fi
    \morphism(\xpos,\ypos)|\xa|/{##1}/<\deltax,0>[\nodeA`\nodeB;\labela]%
 \morphism(\xpos,\ypos)|\xc|/{##3}/<0,-\deltay>[\nodeA`\nodeD;\labelc]%
    \advance \xpos\deltax
     \morphism(\xpos,\ypos)|\xb|/{##2}/<\deltax,0>[\nodeB`\nodeC;\labelb]%
     \morphism(\xpos,\ypos)|\xd|/{##4}/<0,-\deltay>[\nodeB`\nodeE;\labeld]%
     \ifx\zb\empty \morphism(\xpos,\ypos)/<-/<0,\deltaY>[\nodeB`0;]\fi
     \advance\xpos\deltax
 \morphism(\xpos,\ypos)|\xe|/{##5}/<0,-\deltay>[\nodeC`\nodeF;\labele]%
     \ifx\zc\empty \morphism(\xpos,\ypos)/<-/<0,\deltaY>[\nodeC`0;]\fi
     \ifx\ze\empty \morphism(\xpos,\ypos)<\deltaX,0>[\nodeC`0;]\fi
   \nextt/##8/}%
 \def\nextt/##1`##2`##3`##4`##5/{%
 \xpos#1\ypos#2\relax%
   \ifx\zh\empty \morphism(\xpos,\ypos)/<-/<-\deltaX,0>[\nodeG`0;]\fi
   \ifx\zj\empty \morphism(\xpos,\ypos)<0,-\deltaY>[\nodeG`0;]\fi
   \morphism(\xpos,\ypos)|\xk|/{##4}/<\deltax,0>[\nodeG`\nodeH;\labelk]%
   \advance\xpos\deltax
   \morphism(\xpos,\ypos)|\xl|/{##5}/<\deltax,0>[\nodeH`\nodeI;\labell]%
   \ifx\zk\empty \morphism(\xpos,\ypos)<0,-\deltaY>[\nodeH`0;]\fi
   \advance\xpos\deltax
   \ifx\zi\empty \morphism(\xpos,\ypos)<\deltaX,0>[\nodeI`0;]\fi
   \ifx\zl\empty \morphism(\xpos,\ypos)<0,-\deltaY>[\nodeI`0;]\fi
   \xpos#1 \advance\ypos\deltay
    \morphism(\xpos,\ypos)|\xh|/{##1}/<0,-\deltay>[\nodeD`\nodeG;\labelh]%
    \advance \xpos\deltax
    \morphism(\xpos,\ypos)|\xi|/{##2}/<0,-\deltay>[\nodeE`\nodeH;\labeli]%
    \advance \xpos\deltax
 \morphism(\xpos,\ypos)|\xj|/{##3}/<0,-\deltay>[\nodeF`\nodeI;\labelj]}%
 \next/#4/\ignorespaces}%
\def\iiixiiip(#1){\ifnextchar|{\iiixiiipp(#1)}%
  {\iiixiiipp(#1)|aalmrmmlmrbb|}}%
\def\iiixiiipp(#1)|#2|{\ifnextchar/{\iiixiiippp(#1)|#2|}%
    {\iiixiiippp(#1)|#2|/>`>`>`>`>`>`>`>`>`>`>`>/}}%
\def\iiixiiippp(#1)|#2|/#3/{%
    \ifnextchar<{\iiixiiipppp(#1)|#2|/#3/}%
    {\iiixiiipppp(#1)|#2|/#3/<\default,\default>}}%
\def\iiixiiipppp(#1)|#2|/#3/<#4>{\ifnextchar[{\iiixiiippppp(#1)|#2|/#3/%
   <#4>0<0,0>}{\iiixiiippppp(#1)|#2|/#3/<#4>}}%
\def\iiixiiippppp(#1)|#2|/#3/<#4>#5{\ifnextchar<%
   {\iiixiiipppppp(#1)|#2|/#3/<#4>{#5}}%
   {\iiixiiipppppp(#1)|#2|/#3/<#4>{#5}<400,400>}}%
\def\iiixiipppppp(#1,#2)|#3|/#4/<#5>#6<#7>[#8;#9]{%
 \xpos#1\ypos#2\relax%
 \def\next|##1##2##3##4##5##6##7|{\def\xa{##1}\def\xb{##2}%
 \def\xc{##3}\def\xd{##4}\def\xe{##5}\def\xf{##6}\def\xg{##7}}%
 \next|#3|%
 \def\next<##1,##2>{\deltax##1\deltay##2}%
 \next<#5>%
 \deltaX#7
 \topw#6
 \def\next{%
 \ifodd\topw \def\za{}\else\def\za{\relax}\fi \divide\topw by 2
 \ifodd\topw \def\zb{}\else\def\zb{\relax}\fi \divide\topw by 2
 \ifodd\topw \def\zc{}\else\def\zc{\relax}\fi \divide\topw by 2
 \ifodd\topw \def\zd{}\else\def\zd{\relax}\fi}%
 \next%
 \def\next[##1`##2`##3`##4`##5`##6]{%
 \def\nodea{##1}\def\nodeb{##2}\def\nodec{##3}%
 \def\noded{##4}\def\nodee{##5}\def\nodef{##6}}%
 \next[#8]%
 \def\next[##1`##2`##3`##4`##5`##6`##7]{%
 \def\labela{##1}\def\labelb{##2}\def\labelc{##3}%
 \def\labeld{##4}\def\labele{##5}\def\labelf{##6}\def\labelg{##7}}%
 \next[#9]%
 \def\next/##1`##2`##3`##4`##5`##6`##7/{%
 {\ifx\zc\empty\advance\xpos -\deltaX
\relax\morphism(\xpos,\ypos)<\deltaX,0>[0`\noded;]\fi}%
 \morphism(\xpos,\ypos)|\xf|/##6/<\deltax,0>[\noded`\nodee;\labelf]%
 \advance\xpos by \deltax%
 \morphism(\xpos,\ypos)|\xg|/##7/<\deltax,0>[\nodee`\nodef;\labelg]%
 {\ifx\zd\empty \advance\xpos by \deltax
\relax  \morphism(\xpos,\ypos)<\deltaX,0>[\nodef`0;]\fi}%
 \advance\xpos by -\deltax  \advance\ypos by \deltay
 {\ifx\za\empty\advance \xpos by -\deltaX
\relax\morphism(\xpos,\ypos)<\deltaX,0>[0`\nodea;]\fi}%
 \morphism(\xpos,\ypos)|\xa|/##1/<\deltax,0>[\nodea`\nodeb;\labela]%
 \morphism(\xpos,\ypos)|\xc|/##3/<0,-\deltay>[\nodea`\noded;\labelc]%
 \advance\xpos by \deltax%
 \morphism(\xpos,\ypos)|\xb|/##2/<\deltax,0>[\nodeb`\nodec;\labelb]%
 \morphism(\xpos,\ypos)|\xd|/##4/<0,-\deltay>[\nodeb`\nodee;\labeld]%
 \advance\xpos by \deltax%
 \morphism(\xpos,\ypos)|\xe|/##5/<0,-\deltay>[\nodec`\nodef;\labele]%
 \ifx\zb\empty\relax \morphism(\xpos,\ypos)<\deltaX,0>[\nodec`0;]\fi}%
 \next/#4/\ignorespaces}%
\def\iiixiip(#1){\ifnextchar|{\iiixiipp(#1)}%
  {\iiixiipp(#1)|aalmrbb|}}%
\def\iiixiipp(#1)|#2|{\ifnextchar/{\iiixiippp(#1)|#2|}%
    {\iiixiippp(#1)|#2|/>`>`>`>`>`>`>/}}%
\def\iiixiippp(#1)|#2|/#3/{%
    \ifnextchar<{\iiixiipppp(#1)|#2|/#3/}%
    {\iiixiipppp(#1)|#2|/#3/<\default,\default>}}%
\def\iiixiipppp(#1)|#2|/#3/<#4>{\ifnextchar[{\iiixiippppp(#1)|#2|/#3/%
   <#4>{0}<0>}{\iiixiippppp(#1)|#2|/#3/<#4>}}%
\def\iiixiippppp(#1)|#2|/#3/<#4>#5{\ifnextchar<%
   {\iiixiipppppp(#1)|#2|/#3/<#4>{#5}}%
   {\iiixiipppppp(#1)|#2|/#3/<#4>{#5}<400>}}%
\def\node#1(#2,#3)[#4]{%
\expandafter\gdef\csname x@#1\endcsname{#2}%
\expandafter\gdef\csname y@#1\endcsname{#3}%
\expandafter\gdef\csname ob@#1\endcsname{#4}%
\ignorespaces}%
\def\arrowp|#1|{\ifnextchar/{\arrowpp|#1|}{\arrowpp|#1|/>/}}%
\def\arrowpp|#1|/#2/[#3`#4;#5]{%
\xfinish=\csname x@#4\endcsname%
\yfinish=\csname y@#4\endcsname%
\advance\xfinish by -\csname x@#3\endcsname%
\advance\yfinish by -\csname y@#3\endcsname%
\morphism(\csname x@#3\endcsname,\csname y@#3\endcsname)|#1|/{#2}/%
<\xfinish,\yfinish>[\csname ob@#3\endcsname`\csname ob@#4\endcsname;#5]%
}%
\def\Loop(#1,#2)#3(#4,#5){\POS(#1,#2)*+!!<0ex,\axis>{#3}\ar@(#4,#5)}%
\def\iloop#1(#2,#3){\xy\Loop(0,0)#1(#2,#3)\endxy}%
     \let \PATHafterPOS\PATHafterPOS@default%
     \let \arsavedPATHafterPOS@@\relax%
     \let\afterar@@\relax%
\xydef@\endxyobj{\if\inxy@\else\xyerror@{Unexpected \string\endxy}{}\fi%
>  \relax%
>   \dimen@=\Y@max \advance\dimen@-\Y@min%
>   \ifdim\dimen@<\z@ \dimen@=\z@ \Y@min=\z@ \Y@max=\z@ \fi%
>   \dimen@=\X@max \advance\dimen@-\X@min%
>   \ifdim\dimen@<\z@ \dimen@=\z@ \X@min=\z@ \X@max=\z@ \fi%
>   \edef\tmp@{\egroup%
>     \setboxz@h{\kern-\the\X@min \boxz@}%
>     \ht\z@=\the\Y@max \dp\z@=-\the\Y@min \wdz@=\the\dimen@%
>     \noexpand\maybeunraise@ \raise\dimen@\boxz@%
>     \noexpand\recoverXyStyle@ \egroup \noexpand\xy@end%
>     \U@c=\the\Y@max \advance\U@c-\the\Y@c%
>     \D@c=-\the\Y@min \advance\D@c\the\Y@c%
>     \L@c=-\the\X@min  \advance\L@c\the\X@c%
>     \R@c=\the\X@max  \advance\R@c-\the\X@c%
>    }\tmp@}%
\gdef\xymerge@MinMax{}%
\xydef@\twocell{\hbox\bgroup\xysave@MinMax\@twocell}%
\xydef@\uppertwocell{\hbox\bgroup\xysave@MinMax\@uppertwocell}%
\xydef@\lowertwocell{\hbox\bgroup\xysave@MinMax\@lowertwocell}%
\xydef@\compositemap{\hbox\bgroup\xysave@MinMax\@compositemap}%
\xydef@\xysave@MinMax{\xdef\xymerge@MinMax{%
   \noexpand\ifdim\X@max<\the\X@max \X@max=\the\X@max\noexpand\fi%
   \noexpand\ifdim\X@min>\the\X@min \X@min=\the\X@min\noexpand\fi%
   \noexpand\ifdim\Y@max<\the\Y@max \Y@max=\the\Y@max\noexpand\fi%
   \noexpand\ifdim\Y@min>\the\Y@min \Y@min=\the\Y@min\noexpand\fi%
  }}%
\xydef@\drop@Twocell{\boxz@ \xymerge@MinMax}%
\xydef@\twocell@DONE{%
  \edef\tmp@{\egroup%
   \X@min=\the\X@min \X@max=\the\X@max%
   \Y@min=\the\Y@min \Y@max=\the\Y@max}\tmp@%
  \L@c=\X@c \advance\L@c-\X@min \R@c=\X@max \advance\R@c-\X@c%
  \D@c=\Y@c \advance\D@c-\Y@min \U@c=\Y@max \advance\U@c-\Y@c%
  \ht\z@=\U@c \dp\z@=\D@c \dimen@=\L@c \advance\dimen@\R@c \wdz@=\dimen@%
  \computeLeftUpness@%
  \setboxz@h{\kern-\X@p \raise-\Y@c\boxz@ }%
  \dimen@=\L@c \advance\dimen@\R@c \wdz@=\dimen@ \ht\z@=\U@c \dp\z@=\D@c%
  \Edge@c={\rectangleEdge}\Invisible@false \Hidden@false%
  \edef\Drop@@{\noexpand\drop@Twocell%
   \noexpand\def\noexpand\Leftness@{\Leftness@}%
   \noexpand\def\noexpand\Upness@{\Upness@}}%
  \edef\Connect@@{\noexpand\connect@Twocell%
   \noexpand\ifdim\X@max<\the\X@max \X@max=\the\X@max\noexpand\fi%
   \noexpand\ifdim\X@min>\the\X@min \X@min=\the\X@min\noexpand\fi%
   \noexpand\ifdim\Y@max<\the\Y@max \Y@max=\the\Y@max\noexpand\fi%
   \noexpand\ifdim\Y@min>\the\Y@min \Y@min=\the\Y@min\noexpand\fi }%
  \xymerge@MinMax%
}%
\begin{document}
\newtheorem{theorem}{Theorem}
\newtheorem{lemma}[theorem]{Lemma}
\newtheorem{prop}[theorem]{Proposition}
\newtheorem{corollary}[theorem]{Corollary}
\theoremstyle{definition}
\newtheorem{definition}[theorem]{Definition} 

\maketitle 
\begin{abstract}
There are various generalizations of bialgebras to their ``many object''
versions, such as quantum categories, bialgebroids and weak bialgebras. These
can also be thought of as quantum analogues of small categories.
In this paper we study modules over these structures, which are quantum
analogues of profunctors (also called distributors) between small categories.
\end{abstract}

\section{Introduction} 

Notions of $\times_A$-coalgebra and $\times_A$-bialgebra were
introduced by Takeuchi \cite{Takeuchi}. Takeuchi's $\times_A$-bialgebras
generalize bialgebras and are a special case of quantum categories \cite{DS},
which are defined for an arbitrary braided monoidal category $\mathcal{V}$ and also include small categories. 

In this paper we define modules over quantum categories. Modules over
$\times_A$-bialgebras have been considered before. However our definition is
the natural one from the point of view of category theory. In the $\mathcal{V} =
Set$ case it gives profunctors between small categories. Further, we discuss the
question of composing such modules, analogously to composing profunctors.

First we work in an arbitrary braided monoidal category $\mathcal{V}$. Then we
consider several special cases. In Section 3 we briefly examine the $\mathcal{V}
= Set$ case. The setting of Section 4 is that of Takeuchi \cite{Takeuchi}.
Here we also obtain a result about associativity of the operation $\times_A$.
Section 5 is dedicated to weak bialgebras. Takeuchi's
operation $\times_A$ is computed for weak bialgebras.

\section{Comonads, monoidales and Kan extensions}\label{sec2}
 
In this section we will work with a monoidal bicategory $\mathcal{B}$. We
assume that for every $n > 2$ a choice of an $n$-ary tensor product
pseudofunctor

$$\mathcal{B}^n \to^{\otimes_n} \mathcal{B}$$

\noindent is made, which involves choosing an order of bracketing for the tensor
product. The expression $B_1\otimes\ldots\otimes B_n$ refers to
$\otimes_n(B_1\otimes\ldots\otimes B_n)$.
 
A comonad in $\mathcal{B}$ is a pair $(B, g)$
where $B$ is an object of $\mathcal{B}$ and $g = (g, \delta : g \Rightarrow gg, \epsilon : g \Rightarrow 1_g)$ is a comonoid in the homcategory
$\mathcal{B}(B, B)$. A map $(k, \kappa) : (B, g) \to (B', g')$ of comonads
consists of a 1-cell $k : B \Rightarrow B'$ and a 2-cell $\kappa : kg
\Rightarrow g'k$ satisfying the conditions

$$(kg \to^{k\delta} kgg \to^{\kappa g} g'kg \to^{g'\kappa} g'g'k)
\; = \; (kg \to^{\kappa} g'k \to^{g'\delta} g'g'k)\text{,}$$

$$(kg \to^{k\epsilon} k) \; = \; (kg \to^{\kappa} g'k
\to^{\epsilon k} k)\text{.}$$

\bigskip

A comonad map transformation $\tau : (k, \kappa) \Rightarrow (k',
\kappa') : (A, g) \to (B, g')$ is a 2-cell $\tau : k \Rightarrow k'$ satisfying

$$(kg \to^{\tau g} k'g \to^{\kappa'} g'k') \; = \; (kg \to^{\kappa}
g'k \to^{g\tau} g'k')\text{.}$$

\bigskip

\noindent Comonads in $\mathcal{B}$, comonad maps and comonad map
transformations form a bicategory $\mathrm{Comnd}{\mathcal{B}}$ under the
obvious compositions.

A monoidale (called ``pseudomonoid'' in \cite{DMS}) in $\mathcal{B}$ consists of an
object $E$, morphisms $p : E\otimes E \rightarrow E$ and $j : I \rightarrow B$ called the multiplication
and the unit respectively, and invertible 2-cells $p(p\otimes 1_E)
\Rightarrow p(1_E\otimes p)$, $p(j\otimes 1_E) \Rightarrow 1_E$ and
$p(1_E\otimes j) \Rightarrow 1_E$ satisfying two axioms. A monoidal morphism
between monoidales $(f, \phi_2, \phi_0) : E \rightarrow D$ consists of a
morphism $f : E \rightarrow D$ and 2-cells $\phi_2 : p(f\otimes f) \Rightarrow
fp$, $\phi_0 : j \Rightarrow fj$ satisfying three axioms. Monoidales in
$\mathcal{B}$, monoidal morphisms and the obvious 2-cells form a bicategory $\mathrm{Mon}\mathcal{B}$.

For any monoidal $E$ there is an $n$-ary multiplication map

$$E^n \to^{p_n} E\text{.}$$

\noindent It is defined by consecutive multiplications following the order
of the chosen bracketing for the tensor product in $\mathcal{B}$.

A monoidal comonad is a comonad in $\mathrm{Mon}\mathcal{B}$.
Explicitly, it consists of a monoidale $E$ a comonad $g$
on $E$ such that the comultiplication $\delta : g \Rightarrow gg$ and the counit $\epsilon : g
\Rightarrow 1_g$ are maps of monoidal morphisms.

Suppose that $E$ is a monoidale and $g : E \to E$ is an endomorphism such that
the left Kan extensions $Lan_p(p(g\otimes g))$, $Lan_p(p(Lan_p(p(g\otimes
g))\otimes g))$, $Lan_p(p(g\otimes Lan_p(p(g\otimes g))))$ and
$Lan_{p_3}(p_3(g\otimes g\otimes g))$ exist. Giving a monoidal structure on $g$
is equivalent to giving 2-cells $\mu : Lan_p(p(g\otimes g)) \Rightarrow g$ and
$\eta : Lan_jj \Rightarrow g$ satisfying the conditions

\begin{equation}
\label{comonoid} 
\bfig
\Dtriangle(2050,0)/`>`<-/<500,250>[Lan_p(p(g\otimes g))`g`Lan_p(p(g\otimes
g));`\mu`\mu] 
\square(500,0)/>```>/<1550,500>[Lan_p(p(Lan_p(p(g\otimes g))\otimes
g))`Lan_p(p(g\otimes g))`Lan_p(p(g\otimes Lan_p(p(g\otimes g))))`Lan_p(p(g\otimes
g));Lan_p(p(\mu\otimes g))```Lan_p(p(g\otimes\mu))]
\Ctriangle/<-``>/<500,250>[Lan_p(p(Lan_p(p(g\otimes g))\otimes
g))`Lan_{p_3}(p_3(g\otimes g\otimes g))`Lan_p(p(g\otimes Lan_p(p(g\otimes
g))));``]
\square(500,-1000)/>`>`<-`>/<1350,500>[g`g`Lan_p(p(Lan_jj\otimes
g))`Lan_p(p(g\otimes g));1_g``\mu`Lan_p(p(\eta\otimes g))]
\square(500,-2000)/>`>`<-`>/<1350,500>[g`g`Lan_p(p(g\otimes
Lan_jj))`Lan_p(p(g\otimes g));1_g``\mu`Lan_p(p(g\otimes \eta))]
\efig 
\end{equation} 

\bigskip

\noindent The unnamed
arrows here and below are the canonical maps, determined by the universal
properties of left Kan extensions.

Suppose that $(B, g)$ is a comonad and $k : B \to B'$ is a 1-cell. Assume that
the left Kan extension $Lan_k(kg)$ exists and let $\kappa : kg \Rightarrow
Lan_k(kg)k$ be the universal 2-cell. The pair
$(B', Lan_kkg)$ can be uniquely turned into a comonad so that $(k, \kappa)$
becomes a comonad map \cite{FTM1}. Furthermore, there is a correspondence between comonad maps:

$$
\bfig
\morphism(-50,-250)<800,0>[(B', Lan_kg)`(B', g');(1_{B'},
\kappa')]
\morphism(-300, -100)/-/<1200,0>[`;]
\morphism[(B, g)`(B, g');(k, \kappa)]
\efig
$$

\bigskip

\noindent Or more precisely there is an equivalence of
categories:

\begin{equation}\label{equiv}
\mathrm{Comnd}\mathcal{B}((B, g), (B', g')) \simeq
\mathrm{Comon}\mathcal{B}(Lan_kkg, B')\text{.}
\end{equation} 

\bigskip

\noindent $\mathrm{Comon}$ stands for the category of comonoids.

Suppose that $E$ is a monoidale and $g$ is a comonad on $E$. Using
\eqref{equiv} it can be seen that giving a monoidal structure on the comonad $g$
is equivalent to giving comonoid maps $\mu : Lan_p(g\otimes g) \to g$ and $\eta
: Lan_jj \to g$ such that the diagrams \eqref{comonoid} commute, now in the
category $\mathrm{Comon}\mathcal{B}(E, E)$.

The reader might recognize the appropriateness of the context of multitensor
categories. Provided certain left Kan extensions exist, a monoidale structure on $E$
determines a lax monoidal structure on $\mathcal{B}(E, E)$. The $n$-ary
tensor product is 

$$Lan_{p_n}(p_n(-\otimes\ldots\otimes -))\text{.}$$

\bigskip  

\noindent Notion of the comonoid makes sense in any multitensor category. A
monoid in $\mathcal{B}(E, E)$ is a monoidal endomorphism on $E$.
The multitensor structure of $\mathcal{B}(E, E)$ can be lifted to $\mathrm{Comon}\mathcal{B}(E, E)$. A monoid in
$\mathrm{Comon}\mathcal{B}(E, E)$ is a monoidal comonad on $E$.

\begin{definition}
For monoidales $E$ and $E'$, an $(E, E')$-actee is a pseudoalgebra for the
pseudomonad $E\otimes - \otimes E'$ on $\mathcal{B}$. A map between actees is a
map of pseudoalgebras.
\end{definition}

An $(E, E')$-actee structure on an object $B$ consists of a morphism $a :
E\otimes B\otimes E' \to B$ and isomorphisms $a(1_E\otimes a\otimes1_E') \Rightarrow a(p\otimes 1_B\otimes p)$, $a(j\otimes 1_B\otimes j) \Rightarrow 1_B$ satisfying
the two axioms. Here are two special cases of this concept:

\begin{definition} Suppose that $g : E \to E$ and $g'
: E' \to E'$ are monoidal endomorphisms. A $(g, g')$-action on a endomorphism $m
: B \to B$ consists of a morphism $a : E\otimes B\otimes E' \to B$ and a 2-cell
$\gamma : a(g\otimes m\otimes g') \Rightarrow ma$ satisfying axioms.
\end{definition} 

\begin{definition}Suppose that
$(E, g)$ and $(E', g')$ are monoidal comonads. A $(g, g')$-action on a comonad
$(B, m)$ consists of a morphism $a : E\otimes B\otimes E' \to B$ and a comonad
map of the form $(a, \gamma) : (E\otimes B\otimes E', g\otimes m\otimes g') \to (B, m)$ satisfying axioms.
\end{definition}

In both cases there is an underlying $(E, E')$-action on the object
$B$. With existence of the left Kan extensions, a $(g, g')$-action on $m$, with
a given underlying $(E, E')$-action on $B$, is determined by a 2-cell $\alpha :
Lan_a(a(g\otimes m\otimes g')) \Rightarrow m$ satisfying two axioms:

\begin{align}
\bfig
\Dtriangle(2700,0)/`>`<-/<500,250>[Lan_{a}(a(g\otimes m\otimes 
g'))`m`Lan_a(a(g\otimes m\otimes g'));`\alpha`\alpha] 
\square(500,0)/>```>/<2200,500>[Lan_{a}(a(g\otimes Lan_a(a(g\otimes
m\otimes g'))\otimes g'))`Lan_a(a(g\otimes m\otimes
g'))`Lan_a(a(Lan_p(p(g\otimes g))\otimes m\otimes Lan_p(p(g'\otimes
g'))))`Lan_a(a(g\otimes m\otimes g'));Lan_a(a(g\otimes\alpha\otimes
g'))```Lan_a(a(\mu\otimes m\otimes\mu))]
\Ctriangle/<-``>/<500,250>[Lan_{a}(a(g\otimes Lan_a(a(g\otimes m\otimes
g'))\otimes g'))`Lan_{a_3}(a_3(g\otimes g\otimes m\otimes g'\otimes
g'))`Lan_a(a(Lan_p(p(g\otimes g))\otimes m\otimes Lan_p(a(g'\otimes g'))));``]
\square(500,-1000)/>`>`<-`>/<1750,500>[m`m`Lan_a(a(Lan_jj\otimes m\otimes
Lan_jj))`Lan_a(a(g\otimes m\otimes g'));1_m``\alpha`Lan_a(a(\eta\otimes
m\otimes\eta))]
\efig 
\end{align}

\bigskip 

\noindent In the case of comonads, $\alpha$
should be a comonoid map, and the diagrams above
should commute in $\mathrm{Comon}\mathcal{B}(B, B)$.

For a $(g, g')$-action on $m$, the left action map
$\alpha_l$ is defined to be the composite:
 
$$
\bfig
\morphism<1100,0>[Lan_{a_l}(a_l(g\otimes m))`Lan_a(a(g\otimes
m\otimes Lan_jj));]
\morphism(1100,0)<1700,0>[Lan_a(a(g\otimes
m\otimes Lan_jj))`Lan_a(a(g\otimes
m\otimes g'));Lan_a(a(g\otimes m\otimes \eta))]
\morphism(2800,0)<650,0>[Lan_a(a(g\otimes
m\otimes g'))`m;\alpha]
\efig
$$

\bigskip

\noindent and the right action map $\alpha_r$ is defined to be the composite:

$$
\bfig
\morphism<1100,0>[Lan_{a_r}(a_r(m\otimes g'))`Lan_a(a(Lan_jj\otimes
m\otimes g'));]
\morphism(1100,0)<1700,0>[Lan_a(a(Lan_jj\otimes
m\otimes g'))`Lan_a(a(g\otimes m\otimes
g'));Lan_a(a(g\otimes m\otimes \eta))]
\morphism(2800,0)<650,0>[Lan_a(a(g\otimes m\otimes
g'))`m\text{,};\alpha]
\efig
$$

\bigskip

\noindent where $a_l = a(1_E\otimes j)$ and $a_r = (j\otimes 1_E')$.

\section{Quantum Modules}\label{qm} 
Let $\mathcal{V} = (\mathcal{V}, \otimes, c)$ be a braided monoidal category.
Assume that each of the functors $X\otimes -$ preserves coreflexive equalizers.

We will work with a monoidal bicategory $\mathrm{Comod}\mathcal{V}$
considered in \cite{DMS}, which will be taken as the monoidal bicategory
$\mathcal{B}$ of the previous section. Objects of $\mathrm{Comod}\mathcal{V}$
are the comonoids $C = (C, \delta : C \to C\otimes C, \epsilon : C \to I)$ in $\mathcal{V}$. The
homcategory $\mathrm{Comod}\mathcal{V}(C, D)$ is the category of Eilenberg-Moore
coalgebras for the comonad $C\otimes - \otimes D$. A 1-cell from $C$ to $D$,
depicted as $C \spanarr D$, is a comodule from $C$ to $D$. The
composition $N\circ M$ is defined by a coreflexive equalizer

$$M\otimes_C N\to M\otimes N \two^{\delta_r\otimes 1}_{1\otimes \delta_l} M\otimes C\otimes N\text{.}$$

\bigskip

\noindent The monoidal structure of $\mathrm{Comod}\mathcal{V}$ extends that of
$\mathcal{V}$ (although it is not braided). Each comonoid $C = (C, \delta,
\epsilon)$ has an opposite comonoid $C^o = (C, c\delta, \epsilon)$. There are comodules 

$$e : C\otimes C^o \spanarr I \qquad n : I \spanarr C^o\otimes C\text{,}$$

\bigskip

\noindent both of which are $C$ as objects with coactions in string notation
respectively:

\begin{center}
\includegraphics[scale=0.3]{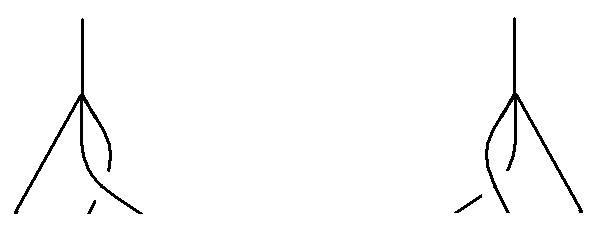}
\end{center}

\noindent This exhibits $C^o$ as a right bidual of $C$ in
$\mathrm{Comod}\mathcal{V}$. It follows that $C^o\otimes C$ is a monoidale
with multiplication $p = C^o\otimes e\otimes C$ and unit $j = n$.

Throughout this paper a right $C^o\otimes C'$-comodule
$X$ will be regarded as a comodule $C \spanarr C'$ using biduality when tensor
products $X\otimes_{C'} -$ or $-\otimes_C X$ are taken.

Let $X_1 : I \spanarr A\otimes C^o$ and $X_2 : I \spanarr C\otimes B$
be comodules. With little calculation it can be established that the composite
comodule

$$I \spanarr^{X_1\otimes X_2} A\otimes C^o\otimes C \otimes B
\spanarr^{A\otimes e\otimes B} A\otimes B$$

\noindent is $X_1\otimes X_2$ with the right $A$- and $B$-coactions on it
induced from the right $A$-coaction on $X_1$ and the right $B$-coaction on
$X_2$.

\begin{definition}
An algebroid in $\mathcal{V}$ is a pair $(A, C)$, where $C$ is
a comonoid in $\mathcal{V}$ and $A$ is a monoidal endomorphism on $C^o\otimes C$. 
\end{definition} 

\begin{definition} (see \cite{DS})
A quantum
category in $\mathcal{V}$  is a pair $(A, C)$, where $C$ is a comonoid in $\mathcal{V}$ and $A$
is a monoidal comonad on $C^o\otimes C$.
\end{definition}

For comonoids $C$ and $C'$, the map $a = C\otimes
e\otimes e\otimes C'$ determines a $(C^o\otimes C, C'^o\otimes C')$-action
on $C^o\otimes C'$.

\begin{definition} A module from an algebroid $(A, C)$ to an algebroid $(A',
C')$ consists of a comodule $M : C'^o\otimes C \spanarr C'^o\otimes C$ and a $(A, A')$-action on
$M$, such that the underlying $(C^o\otimes C, C'^o\otimes C')$-action on
$C^o\otimes C$ is the canonical one.
\end{definition}

\begin{definition}
A quantum module from a quantum category $(A, C)$ to a quantum category
$(A', C')$ consists of a comonad $(M, C'^o\otimes C)$ and an $(A, A')$-action on
$(M, C'^o\otimes C)$, such that the underlying $(C^o\otimes C, C'^o\otimes C')$
action on $C^o\otimes C'$ is the canonical one.
\end{definition}

A (quantum) module $M$ from $(A, C)$ to $(A', C')$ has a coaction 2-cell:

$$
\bfig
\square/@{>}|-*@{|}`@{>}|-*@{|}`@{>}|-*@{|}`@{>}|-*@{|}/<1700,600>[C^o\otimes
C\otimes C^o\otimes C'\otimes C'^o\otimes C'`C^o\otimes
C\otimes C^o\otimes C'\otimes C'^o\otimes C'`C^o\otimes C'`C^o\otimes
C';A\otimes M\otimes A'` C^o\otimes e\otimes e\otimes C'`C^o\otimes
e\otimes e\otimes C'`M] 
\morphism(850,350)/=>/<0,-100>[`;\gamma]
\efig
$$

\noindent satisfying two axioms. In the case of a quantum module
$(C^o\otimes e\otimes e\otimes C', \gamma)$ should be a comonad map.

A map of (quantum) modules is a comodule map $M_1 \to M_2$ respecting
the action (for quantum modules it also should be a comonad transformation).

We will apply the machinery of Section \ref{sec2} to our present context. For
this we will need existence of certain left Kan extensions in
$\mathrm{Comod}\mathcal{V}$, and that will be discussed prior. First we
introduce the following structure on the class of comodules of the form $X : C^o\otimes C' \spanarr C^o\otimes C'$ (strictly speaking on the class of triples $(X, C, C')$,
where $C$ and $C'$ are comonoids and $X$ is a comodule of the indicated form).

For comodules $X_i : C_{i-1}^o\otimes C_{i} \spanarr C_{i-1}^o\otimes C_{i}, 1
\leq i \leq n$, define $T_{(C_0, C_2,\ldots C_{n})}(X_1, X_2 \ldots X_n)$ or
simply $T_n(X_1, X_2 \ldots X_n)$ to be the comodule determined by the left Kan
extension
 
\begin{equation}
\label{Tn}
\bfig
\square/@{>}|-*@{|}`@{>}|-*@{|}`@{>}|-*@{|}`@{>}|-*@{|}/<1700,600>[C^o_0\otimes
C_1\otimes\ldots\otimes C_{n-1}\otimes C_n`C^o_0\otimes
C_1\otimes\ldots\otimes C_{n-1}\otimes C_n`C^o_0\otimes C_n`C^o_0\otimes
C_n;X_1\otimes\ldots\otimes X_n`C^o\otimes e\otimes\ldots\otimes e\otimes
C_n`C^o\otimes e\otimes\ldots \otimes e\otimes
C_n`T_n(X_1, \ldots X_n)] 
\morphism(850,350)/=>/<0,-100>[`;]
\efig
\end{equation}

\bigskip

\noindent when this exists. For $n = 1$ this gives $T_1(X_1) = X_1$. For a
comonoid $C$ define $T_C()$ or simply $T_0()$ to be the comodule determined by the left Kan extension

$$
\bfig
\Atriangle/@{>}|-*@{|}`@{>}|-*@{|}`@{>}|-*@{|}/[I`C^o\otimes C`C^o\otimes
C;n`n`T_0()]
\morphism(500,300)/=>/<0,-100>[`;]
\efig
$$

\noindent when this exists. Clearly, the $T_n$ can be made into
functors.

For each partition $\xi :
m=m_1+m_2+\ldots+m_n$, $m_i \geq 0$, the universal properties of left Kan
extensions give an associativity map:

\begin{equation}
\label{alpha}
\beta_{\xi} : T_m(X_{11}, \ldots, X_{nm_m}) \to
T_n(T_{m_1}(X_{11}, \ldots, X_{1m_1}), \ldots, T_{m_n}(X_{n1}, \ldots,
X_{nm_n}))\text{.}
\end{equation} 

\bigskip These are natural in all variables and satisfy coherence
conditions.

When it exists, let $\mathrm{coHom}(X, Y)$ be the internal cohom object in
$\mathcal{V}$, meaning that there is a natural bijection:

$$
\bfig
\morphism(100,-250)[\mathrm{coHom}(X, Y)`Z;f^\ast]
\morphism(-300, -100)/-/<1000,0>[`;]
\morphism(-100,0)[Y`X\otimes Z;f]
\efig
$$

\bigskip

\noindent If $X$ and $Y$ are left $C$-comodules, then $\mathrm{coHom}_C(X, Y)$
is defined by the coequalizer

$$\mathrm{coHom}(C\otimes X, Y) \two^{(coev_{X
,Y}\delta_l^X)^\ast}_{(\delta_l^Ycoev_{X, Y})^\ast} \mathrm{coHom}(X, Y) \to
\mathrm{coHom}_C(X, Y)\text{.}$$

\bigskip

\noindent If $X : C \spanarr A$ and $Y :
C \spanarr B$ are comodules, then $\mathrm{coHom}_C(X, Y)$ becomes a
$A \spanarr B$ comodule. The left Kan extension of $Y$ along $X$ is $\mathrm{coHom}(X,
Y)$.

We deduce that the left Kan extensions \eqref{Tn} exist if $\mathrm{coHom}(C,
X)$ exists for every $X$. 

For $n=2$, $T_2(X_1, X_2)$ can be computed as (setting for simplicity of
notation $C_1 = C$):

\begin{equation*} 
\mathrm{coHom}_{(C^o\otimes C\otimes C^o\otimes C)}(C\otimes
C\otimes C, (C_0\otimes e\otimes C_2)\circ(X_1\otimes X_2)) \cong
\end{equation*}

\begin{equation*} 
\mathrm{coHom}_{(C^o\otimes C\otimes C^o\otimes C)}(C\otimes
C\otimes C, X_1\otimes_C X_2) \cong
\end{equation*}

\begin{equation}\label{ccoc} 
\mathrm{coHom}_{(C\otimes C^o)}(C, X_1\otimes_C X_2)\text{.}
\end{equation}

\bigskip

\noindent It can be shown that this is isomorphic to
the coequalizer of the pair

\begin{equation}
\label{coeqpair}
\mathrm{coHom}(C, X_1\otimes_CX_2)
\two^{(\delta_l^C)^\ast}_{(\delta_l^{C^o})^\ast} X_1\otimes_CX_2\text{,}
\end{equation}

\bigskip 

\noindent wherein the left $C$-coaction $\delta_l^C$ on $X_1\otimes X_2$ is
induced by the left coaction of $C$ on $X_1$, and the left $C^o$-coaction
$\delta_l^{C^o}$ on $X_1\otimes X_2$ is induced by the left coaction of $C^o$ on
$X_2$.

For $n = 0$ we have

$$T_C() = \mathrm{coHom}(C, C)\text{.}$$

The next lemma provides an even more general situation when the
operations $T_n$ can be defined. We need the following definition. 

A $C\otimes$-coequalizer of the pair of morphisms in $\mathcal{V}$

$$Y \two^f_g C\otimes X$$

\bigskip 

\noindent is a map $h : X \to Z$ for which $(1\otimes h)f = (1\otimes h)g$
such that for any other map $h' : X \to Z'$ for which $(1\otimes h')f =
(1\otimes h')g$ there exists a unique $z : Z \to Z'$ with $zh = h'$.

\begin{lemma}\label{lemma}
Suppose that $Y$ is a $A\otimes C\otimes 
C^o\otimes B \spanarr D$ comodule. Suppose that $Z$ is the
$C\otimes$-coequalizer of the pair

\begin{equation}
\label{L}
Y \two^{\delta^C_l}_{\delta^{C^o}_l} C\otimes Y\text{.}
\end{equation}

\bigskip

\noindent Then $Z$ becomes a comodule
$A\otimes B \spanarr D$. The left Kan extension of $Y$
along $A\otimes e\otimes B$ is $Z$.

$$
\bfig
\Atriangle[A\otimes C^o\otimes C\otimes B`A\otimes B` D;A\otimes
e\otimes B`Y`Z]
\morphism(500,300)/=>/<0,-100>[`;]
\efig
$$

\end{lemma}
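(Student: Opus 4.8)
The plan is to identify the object $Z$ with the left Kan extension $Lan_{A\otimes e\otimes B}(Y)$ in $\mathrm{Comod}\mathcal{V}$, whence the triangle and the asserted $A\otimes B\spanarr D$ comodule structure on $Z$ both follow, since the Kan extension of a comodule along a comodule is again a comodule with the stated source and target. The starting point is the identification recorded above, that Kan extensions along comodules are computed by internal cohoms: for comodules $X$ and $Y$ sharing their source, $Lan_X(Y)$ is the cohom of $Y$ by $X$ over that common source, whenever it exists. Taking $X = A\otimes e\otimes B\colon A\otimes C\otimes C^o\otimes B\spanarr A\otimes B$, the problem becomes that of computing $\mathrm{coHom}_{A\otimes C\otimes C^o\otimes B}(A\otimes e\otimes B,\,Y)$ and matching it with the $C\otimes$-coequalizer $Z$ of the pair \eqref{L}. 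The match will simultaneously dispose of the existence question: a $C\otimes$-coequalizer is defined with no reference to any cohom, so once the two are shown to agree, the mere existence of $Z$ guarantees that this Kan extension exists.

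The computation proceeds by two reductions, each modelled on the $n=0$ and $n=2$ calculations preceding the lemma. First, one absorbs the $A$- and $B$-legs: on the object underlying $A\otimes e\otimes B$, namely $A\otimes C\otimes B$, the outer $A$- and $B$-coactions are the canonical ones inherited from $Y$, and, exactly as in the collapse $\mathrm{coHom}_{(C^o\otimes C\otimes C^o\otimes C)}(C\otimes C\otimes C,-)\cong\mathrm{coHom}_{(C\otimes C^o)}(C,-)$ used to obtain \eqref{ccoc}, a cohom taken over a tensor product of comonoids one of whose factors acts canonically on the first argument reduces to the cohom over the remaining factor. This gives $\mathrm{coHom}_{A\otimes C\otimes C^o\otimes B}(A\otimes e\otimes B,\,Y)\cong\mathrm{coHom}_{C\otimes C^o}(C,\,Y)$, where $Y$ is now viewed as a left $C\otimes C^o$-comodule through its middle two coactions, the surviving $A$-, $B$- and $D$-coactions being carried along to supply the claimed structure on the result. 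Second, one unfolds $\mathrm{coHom}_{C\otimes C^o}(C,\,Y)$ through its defining coequalizer and transposes along the adjunction $\mathrm{coHom}(C,-)\dashv C\otimes-$, precisely the manipulation that passes from \eqref{ccoc} to \eqref{coeqpair}: by naturality of the counit of that adjunction, a map $h\colon Y\to Q$ coequalizes the transposed pair $\mathrm{coHom}(C,Y)\to Y$ if and only if $(1_C\otimes h)$ coequalizes the pair \eqref{L}, so $\mathrm{coHom}_{C\otimes C^o}(C,Y)$ is exactly the $C\otimes$-coequalizer of \eqref{L}, i.e.\ $Z$.

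It then remains to produce the universal $2$-cell of the triangle and to see that the residual comodule structure really does descend. For the former, one unwinds $Z\circ(A\otimes e\otimes B)$ using the biduality that exhibits $C^o$ as a right bidual of $C$ (the triangle identities relating $e$ and $n$); the comparison $Y\Rightarrow Z\circ(A\otimes e\otimes B)$ is then the middle $C\otimes C^o$-coaction of $Y$ followed by the coequalizer projection $Y\to Z$, and its universality against an arbitrary $W\colon A\otimes B\spanarr D$ is just the universal property of the coequalizer read through that biduality. For the latter, the pair \eqref{L} is coreflexive, its common retraction being $\epsilon_C\otimes 1$, so the $C\otimes$-coequalizer is a reflexive coequalizer; the standing hypothesis that every functor $X\otimes-$ preserves coreflexive equalizers is exactly what is needed for the leftover $A$-, $B$- and $D$-coactions on $Y$ to pass to the quotient $Z$ and make the comparison $2$-cell a morphism of comodules.

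The delicate part is the first of the two reductions above. For the $T_n$ the argument was carried out only for $Y$ of the tensor form $X_1\otimes\cdots\otimes X_n$, whereas here $Y$ is an arbitrary comodule; so one must carefully bookkeep, for such a general $Y$, which of its coactions are quotiented by the cohom over $A\otimes C\otimes C^o\otimes B$ and which are untouched, and verify in that generality the principle that a canonically induced coaction collapses the corresponding factor of a cohom-over. Once this is in place — together with the routine checks that the coreflexivity hypothesis suffices to form and to preserve every (co)equalizer involved, including the composite $Z\circ(A\otimes e\otimes B)$ — the remaining diagram chases are the ones already rehearsed in the $n=0$ and $n=2$ cases.
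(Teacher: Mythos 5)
Your proof takes a genuinely different route from the paper's, and that route has a real gap at its core. You compute $Lan_{A\otimes e\otimes B}Y$ as $\mathrm{coHom}_{A\otimes C\otimes C^o\otimes B}(A\otimes e\otimes B,\,Y)$, reduce it to $\mathrm{coHom}_{C\otimes C^o}(C,\,Y)$, and then transpose along $\mathrm{coHom}(C,-)\dashv C\otimes -$ to land on the pair \eqref{L}. But the lemma exists precisely to cover the situation where these cohoms need not exist: it is introduced as providing ``an even more general situation when the operations $T_n$ can be defined,'' and the remark immediately after it says that only \emph{if} the internal cohom exists can one transpose $C$ to the left and recover the coequalizer \eqref{coeqpair}. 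Your attempt to defuse this --- ``once the two are shown to agree, the mere existence of $Z$ guarantees that this Kan extension exists'' --- is circular, because the agreement is itself established by manipulating the object $\mathrm{coHom}(C,Y)$; if that object is not there, there is no transposed pair and the argument never starts. What is actually needed, and what the paper does, is a direct verification of the universal property: identify the composite $X\circ(A\otimes e\otimes B)$ as $C\otimes X$ with cofree middle $C$- and $C^o$-coactions, observe that a $2$-cell $Y\Rightarrow X\circ(A\otimes e\otimes B)$ is exactly a map satisfying the fork condition appearing in the definition of $C\otimes$-coequalizer, and exhibit $(1\otimes h)\delta^C_l\colon Y\to C\otimes Z$ as the universal $2$-cell. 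You never supply this step, and the reduction you yourself flag as ``the delicate part'' (collapsing the $A$- and $B$-legs of the cohom for a $Y$ not of tensor form) is also left unproved.

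There are two secondary problems. First, to descend the residual $A$-, $B$- and $D$-coactions to $Z$ you appeal to the pair \eqref{L} being reflexive and to the standing hypothesis that each $X\otimes-$ preserves coreflexive \emph{equalizers}. That hypothesis is about equalizers (it is what makes the composition $M\otimes_C N$ of comodules work) and says nothing about coequalizers; moreover the pair \eqref{L} admits the common retraction $\epsilon\otimes 1$, so it is coreflexive rather than reflexive. No preservation property is needed here: the paper obtains $\delta^A_l\colon Z\to A\otimes Z$ directly from the universal property of the $C\otimes$-coequalizer, using the braiding to rewrite the two composites $Y\to A\otimes C\otimes Y\to C\otimes A\otimes Z$ so that the quotient map appears under a single $C\otimes(-)$. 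Second, the biduality/triangle-identity unwinding you propose for the universal $2$-cell is not how the comparison is built; it is simply $(1\otimes h)\delta^C_l$, and its universality is read off from the definition of $C\otimes$-coequalizer, not from the $e$--$n$ adjunction.
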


\begin{proof}
Let $h : Y \to Z$ be the $C\otimes$-coequalizer of \eqref{L}. In the diagram

$$
\bfig
\square/`>`>`/<650,500>[Y`C\otimes Y`A\otimes Y`A\otimes C\otimes
Y;`\delta^A_l`c^{-1}\delta_l^{A}`]
\morphism(150,20)<230,0>[`;A\otimes\delta^C_l]
\morphism(150,-20)|b|<230,0>[`;A\otimes\delta^{C^o}_l]
\morphism(80,520)<400,0>[`;\delta^C_l]
\morphism(80,480)|b|<400,0>[`;\delta^{C^o}_l]
\square(650,0)/>``>`>/<900,500>[C\otimes Y`C\otimes Z`A\otimes C\otimes
Y`C\otimes A\otimes Z;1\otimes h``1\otimes\delta_l^A`(c\otimes1)(1\otimes
1\otimes h)]
\efig
$$

\noindent the top and the bottom horizontal parts are
commutative and so are the two squares to the left. Hence there exists a unique
map $\delta^A_l: Z \to A\otimes Z$ rendering the right square commutative. This
map defines left $A$-coaction on $Z$. Left $B$- and right $D$-coactions on $Z$
are defined similarly. Thus $Z$ becomes a comodule $A\otimes B \spanarr D$.

For a comodule $X : A\otimes B \spanarr D$ the composite comodule
$X\circ(1\otimes e\otimes1)$ is $C\otimes X$. The
left $A$- and $B$- and right $D$-coactions on $C\otimes X$ are induced by the respective coactions on
$X$. The left $C$- and $C^o$-coactions on $X$ are the cofree coactions, meaning
that they are determined by comultiplications. Using this fact we can
establish that a 2-cell

$$Y \Rightarrow X\circ(1\otimes e\otimes1)$$

\noindent is a map $h' : Y \to C\otimes X$ which respects left $A$- and
$B$- and right $D$-coactions and satisfies

$$Y \two^{\delta_l^C}_{\delta_l^{C^o}} C\otimes Y \to^{1\otimes h'} C\otimes X
\text{.}$$
 
Define the univeral 2-cell

$$Y \Rightarrow Z\circ(1\otimes e\otimes1)$$ 

\noindent to be the map $(1\otimes h)\delta^C_l : Y \to C\otimes Z$. The
univeral property follows from the above and the definition of
$C\otimes$-coequalizer.
\end{proof}

Taking $A = C_0^o, C = C_1, B = C_2, 
D = C_0^o\otimes C_2$ and $X = p\circ(X_1\otimes X_2) = X_1\otimes_C X_2$ we
get $T_2(X_1, X_2)$ to be the $C\otimes$-coequalizer of

\begin{equation}
\label{pair}
X_1\otimes_CX_2 \two^{\delta_l^{C}}_{\delta_l^{C^o}} C\otimes
(X_1\otimes_CX_2)\text{.}
\end{equation}  
 
\noindent (as before we have rendered $C_1 = C$). If the internal cohom exists,
then we can transpose $C$ to the left and that will get us exactly the
coequalizer diagram \eqref{coeqpair}.

For $n \geq 2$, we can write $C^o_0\otimes e\otimes\ldots\otimes e
\otimes C_n$ as a composite

$$(C^o\otimes e \otimes C_n)\circ(C^o\otimes e \otimes
C_{n-1}\otimes C^o_{n-1}\otimes C_n)\circ\ldots\circ(C^o_0\otimes e
\otimes C_2\otimes\ldots\otimes C_n)$$

\bigskip 

\noindent The left Kan extensions along $C^o_0\otimes e\otimes\ldots\otimes
e \otimes C_n$ can be computed by consecutive applications of Lemma
\ref{lemma}. In particular $T_n$ for $n>2$ can be computed in this way.

Assume henceforth that the operations $T_n$ are defined. From Section \ref{sec2}
we obtain the following alternative definitions of algebroids, quantum
categories and their modules: 

An algebroid $(A, C)$ in $\mathcal{V}$ consists of a comodule $A : C^o\otimes C
\spanarr C^o\otimes C$ together with maps $\mu : T_2(A, A) \to A$ and $\eta
: T_0() \to A$ satisfying three conditions.

A module $M$ from an algebroid $(A, C)$ to an algebroid $(A', C')$ consists of a
comodule $M : C^o\otimes C' \spanarr C^o\otimes C'$ and a map $\alpha : T_3(A,
M, A') \to M$ satisfying two conditions.

A quantum category $(A, C)$ in
$\mathcal{V}$ consists of a comonad $A$ on $C^o\otimes C$ together with
comonoid maps $\mu : T_2(A, A) \to A$ and $\eta : T_0() \to A$ satisfying three
conditions.

A quantum module $M$ from a quantum category $(A, C)$ to a quantum category
$(A', C')$ consists of a comonad $M$ on $C^o\otimes C$ and a comonoid
map $\alpha :T_3(A, M, A') \to M$ satisfying three conditions.

For a (quantum) module the left action map $\alpha_l$ and the right action map
$\alpha_r$ are given as:

$$\alpha_l = \Big(T_2(A, M) \to^{0+2} T_3(A, M, T_0()) \to^{T_3(A, M, \eta)}
T_3(A, M, A') \to^{\alpha} M\Big)$$

$$\text{and} \qquad \alpha_r = \Big(T_2(M, A) \to^{2+2} T_3(T_0(), A, M)
\to^{T_3(\eta, M, A')} T_3(A, M, A') \to^{\alpha} M\Big)$$

Suppose that $(A_1, C_1), (A_2, C_2)$ and $(A_3, C_3)$ are algebroids. Let $M_1$
be a module from $(A_1, C_1)$ to $(A_2, C_2)$ and let $M_2$ be a module from
$(A_2, C_2)$ to $(A_3, C_3)$. Define $M\bullet N$ by the coequalizer  

\begin{equation}\label{bullet}
T_3(M_1, A_2, M_2) \two^{T_2(M_1,\alpha_l)\beta_{1+2}}_{T_2(\alpha_r,
M_2)\beta_{2+1}} T_2(M_1, M_2) \to M_1\bullet N_2
\end{equation}

\bigskip

\noindent in $\mathrm{Comod}\mathcal{V}(C_1^o\otimes C_3, C_1^o\otimes C_3)$.
Coequalizers in the comodule category are computed as in $\mathcal{V}$.

Generally the opperation $\bullet$ is not associative (which is not surprising
since $T_2$ itself is not associative). It is not even a proper
composition since $M_1\bullet M_2$ does not become a module from
$(A_1, C_1)$ to $(A_3, C_3)$. However it does have left and right units.

Given an algebroid $(A, C)$, via the algebroid multiplication, $A$ becomes a
module from $(A, C)$ to $(A, C)$. So the action map is $\mu : T_2(A, A)
\to A$.

\begin{lemma}\label{lu}
We have: $A\bullet M = M$ and $M\bullet A = M$.
\end{lemma}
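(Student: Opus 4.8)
The plan is to recognise $A\bullet M$ as the two-sided bar coequalizer computing ``$A\otimes_A M$'', and to show that the appropriate one-sided action map of $M$ already solves it, by exhibiting a split fork. First I would rewrite \eqref{bullet} in the present situation. As a module from $(A,C)$ to $(A,C)$ the algebroid carries the action built from $\mu$, and the algebroid unit axiom $\mu\,T_2(\eta,A)=\mathrm{id}_A$, read modulo the constraint $T_2(T_0(),-)\cong\mathrm{id}$, shows that both one-sided actions of $A$ on itself are $\mu$. Hence $A\bullet M$ is the coequalizer, in $\mathrm{Comod}\mathcal{V}(C^o\otimes C',C^o\otimes C')$, of
$$T_3(A,A,M)\two^{T_2(A,\alpha_l)\,\beta_{1+2}}_{T_2(\mu,M)\,\beta_{2+1}}T_2(A,M)\text{,}$$
where $\alpha_l$ is the left action of $A$ on $M$; the claim becomes that this coequalizer is $M$ with coequalizing map $\alpha_l$. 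Dually $M\bullet A$ is the coequalizer of the pair $T_3(M,A,A)\to T_2(M,A)$ with maps $T_2(M,\mu)\beta_{1+2}$ and $T_2(\alpha_r,A)\beta_{2+1}$, and the claim is that it is $M$ with coequalizing map $\alpha_r$.

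Second, I would record the two facts about $\alpha_l$ that are needed, both immediate from the two conditions in the $T_n$-language definition of the module $M$, specialised along $\eta'$: the unit law $\alpha_l\,T_2(\eta,M)=\mathrm{id}_M$, and the associativity law $\alpha_l\,T_2(A,\alpha_l)\,\beta_{1+2}=\alpha_l\,T_2(\mu,M)\,\beta_{2+1}$. The associativity law says precisely that $\alpha_l$ coequalizes the displayed pair, so only its universality remains.

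The one genuine step is constructing the splitting from the unit $\eta\colon T_0()\to A$. Put $s=T_2(\eta,M)\colon M\to T_2(A,M)$, using $M\cong T_2(T_0(),M)$, and $t=T_3(\eta,A,M)\colon T_2(A,M)\to T_3(A,A,M)$, using $T_2(A,M)\cong T_3(T_0(),A,M)$. Naturality of the unitor $T_2(\eta,-)$ at the morphism $\alpha_l$ gives $T_2(A,\alpha_l)\,\beta_{1+2}\,t=s\,\alpha_l$; the algebroid left-unit axiom together with coherence of the comparison maps $\beta_\xi$ gives $T_2(\mu,M)\,\beta_{2+1}\,t=\mathrm{id}_{T_2(A,M)}$. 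These two identities, with $\alpha_l\,s=\mathrm{id}_M$, exhibit $\alpha_l$ as the coequalizer of the pair via a split fork with splitting $(s,t)$. Since split forks are absolute and coequalizers in $\mathrm{Comod}\mathcal{V}$ are created in $\mathcal{V}$, the conclusion holds in $\mathrm{Comod}\mathcal{V}(C^o\otimes C',C^o\otimes C')$ and, by uniqueness, respects the module structures, so $A\bullet M=M$. The identity $M\bullet A=M$ is the mirror image: one uses the right action $\alpha_r$, inserts $\eta$ on the other side, and invokes the algebroid right-unit axiom.

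The main obstacle is purely bookkeeping rather than conceptual: pushing the associativity comparisons $\beta_\xi$ and the unit constraints $T_2(T_0(),-)\cong\mathrm{id}\cong T_2(-,T_0())$ through the two split-fork identities, and checking that the two module conditions for $M$ really do specialise, after restriction along $\eta'$, to the unit and associativity laws for the one-sided action $\alpha_l$. Beyond that the argument is just the familiar ``$R\otimes_R M\cong M$'' pattern for a monoid acting on a module.
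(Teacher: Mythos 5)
Your proposal is correct and takes essentially the same route as the paper: the paper's proof also exhibits $\alpha_l$ as a split coequalizer of the pair defining $A\bullet M$, with exactly your splittings $s=T_2(\eta,1)\beta_{0+1}$ and $t=T_3(\eta,1,1)\beta_{0+1+1}$, and verifies the same three split-fork identities ($\alpha_l s=1$, $T_2(\mu,1)\beta_{2+1}t=1$, and $T_2(1,\alpha_l)\beta_{1+2}t=s\,\alpha_l$) using the unit axioms and the naturality and coherence of the $\beta_\xi$.
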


\begin{proof} 
The diagram 

$$T_3(A, A, M) \two^{T_2(1, \alpha_l)\beta_{1+2}}_{T_2(\mu, 1)\beta_{2+1}}
T_2(A, M) \to^{\alpha_l} 1$$

\noindent is a split coequalizer diagram split by the maps:

$$T_3(A, A, M) \to/<-/^{T_3(\eta, 1, 1)\beta_{0+1+1}} T_2(A, M)
\qquad\text{and}\qquad T_2(A, M) \to/<-/^{T_2(\eta, 1)\beta_{0+1}} M\text{.}$$

\noindent This follows from the calculations below. Aside from algebroid and
module axioms we use the naturality and coherence of the maps $\beta_\xi$. 
 
$$\alpha_lT_2(\eta, 1)\beta_{0+1} = 1\text{;}$$

$$T_2(\mu, 1)\beta_{2+1}T_3(\eta, 1, 1)\beta_{0+1+1} = T_2(\mu,
1)T_2(T_2(\eta, 1), 1)\beta_{2+1}\beta_{0+1+1}$$
$$= T_2(\mu, T_2(\eta,
1), 1)T_2(\beta_{0+1}, 1) = 1;$$

$$T_2(1, \alpha_l)\beta_{1+2}T_3(\eta, A, M)\beta_{0+1+1} = T_2(1,
\alpha_l)T_2(\eta, T_2(1, 1))\beta_{1+2}\beta_{0+1+1}$$
$$ = T_2(\eta, \alpha_l)\beta_{0+2} = T_2(\eta, 1)T_2(1,
\alpha_l)\beta_{0+2} T_2(\eta, 1)\beta_{0+2}\alpha_l$$ 
$$= T_2(\eta,
1)\beta_{0+1}\alpha_l$$

\noindent At the end of the last calculation we used the fact that $\beta_{0+1}
= \beta_{0+2}$. This follows directly from the definitions of
$\beta_\xi$.

We have proved that $A\bullet M = M$. The proof of $A\bullet M = M$ is similar. 

\end{proof}

To make $\bullet$ into an associative composition we need
to restrict the class of algebroids and modules that we consider.

Let $\mathcal{X}$ be a class of comodules of the form $X :
C^{o}\otimes C' \spanarr C^{o}\otimes C'$ such that

\begin{enumerate}\label{conditions}
  \item If $X_1, \ldots, X_n$ are in $\mathcal{X}$, then the left Kan
  extension \eqref{Tn} exists.
  \item If $X_{11}, \ldots, X_{mn}$ are in $\mathcal{X}$, then the map
  $\beta_\xi$ \eqref{alpha} is an isomorphism for any partition $\xi =
  m_1+\ldots+m_n$ with $m_i > 0$.
  \item If $X$ is in $\mathcal{X}$, then the functors 
  
  $$X\otimes_C - : \mathrm{Comod}\mathcal{V}(C, I) \to \mathcal{V}$$
  
  $$-\otimes_C X : \mathrm{Comod}\mathcal{V}(I, C) \to \mathcal{V}$$
    
preserve reflexive coequalizers. 
  \item If $X$ and $Y$ are in $\mathcal{X}$, then so is $X\otimes_CY$.
  \item If $X$ is in $\mathcal{X}$ and $X \to Y$ is an epimorphism, then $Y$
  is in $\mathcal{X}$.
\end{enumerate}

\begin{theorem}\label{t1}
Fix a class $\mathcal{X}$ as above. Consider those algebroids and those
modules between them for which the underlying $C'^o\otimes C \spanarr
C'^o\otimes C$ comodules are in $\mathcal{X}$. These form a bicategory under the composition
$\bullet$.
\label{theorem}
   
\end{theorem}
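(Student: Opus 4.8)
The strategy is to recognise the situation as an instance of the familiar ``bicategory of bimodules over monoids'': an algebroid $(A,C)$ is a monoid for the operations $T_n$ restricted to the class $\mathcal X$, a module is a bimodule, and $M_1\bullet M_2$ is the bimodule tensor product over the intermediate algebroid. Conditions (1)--(5) are exactly what is needed to run the usual argument in the present setting, where $T_2$ is not strictly associative and the ambient $2$-category carries no genuine monoidal structure. The plan is: (i) bootstrap the exactness hypothesis (3) from tensors up to the $T_n$; (ii) prove that $\bullet$ is well defined on the restricted algebroids and modules; (iii) construct the associator by an interchange of colimits; (iv) derive the unit constraints from Lemma~\ref{lu} and check the bicategory coherence axioms.

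First I would establish the \emph{exactness lemma}: for $X_1,\dots,X_n$ in $\mathcal X$, each partial functor $T_n(X_1,\dots,X_{i-1},-,X_{i+1},\dots,X_n)$ maps $\mathcal X$ into itself and preserves reflexive coequalizers, and the canonical morphism $X_1\otimes_C\cdots\otimes_C X_n\to T_n(X_1,\dots,X_n)$ is a regular epimorphism. This is proved by induction using the decomposition of $C^o_0\otimes e\otimes\cdots\otimes e\otimes C_n$ into a composite of morphisms of the form $1\otimes e\otimes 1$, so that $T_n$ is built by iterated application of Lemma~\ref{lemma}; at each stage one has a tensor $\otimes_C$ followed by a $C\otimes$-coequalizer, where condition (3) handles the former (and one must propagate it through the $C\otimes$-coequalizer construction), while conditions (4) and (5) keep the intermediate comodules inside $\mathcal X$ and identify the $C\otimes$-coequalizers as regular epimorphisms. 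Granting this, $M_1\bullet M_2$ is again in $\mathcal X$ whenever $M_1,M_2$ are: by \eqref{bullet} it is a regular-epimorphic quotient of $T_2(M_1,M_2)$, which lies in $\mathcal X$ by (4) and (5).

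Next I would put a module structure on $M_1\bullet M_2$. The coequalizer \eqref{bullet} is reflexive, the common section inserting the unit $\eta_{A_2}:T_0()\to A_2$ into the middle entry, each composite of this section with one of the two parallel maps being the identity by the unit axiom for $M_1,M_2$. Applying $T_3(A_1,-,A_3)$ and the exactness lemma, $T_3(A_1,M_1\bullet M_2,A_3)$ is presented as a coequalizer of $T_3(A_1,T_3(M_1,A_2,M_2),A_3)$ over $T_3(A_1,T_2(M_1,M_2),A_3)$; by condition (2) the maps $\beta_\xi$ identify these with $T_5(A_1,M_1,A_2,M_2,A_3)$ and $T_4(A_1,M_1,M_2,A_3)$. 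One then takes the morphism $T_4(A_1,M_1,M_2,A_3)\to^{\beta_{2+2}}T_2(T_2(A_1,M_1),T_2(M_2,A_3))\to^{T_2(\alpha^{M_1}_l,\alpha^{M_2}_r)}T_2(M_1,M_2)$, where $\alpha^{M_1}_l$ and $\alpha^{M_2}_r$ are the one-sided action maps; that this descends along the projection $q:T_2(M_1,M_2)\to M_1\bullet M_2$ amounts to the statement that the $A_2$ in $T_5(A_1,M_1,A_2,M_2,A_3)$ can be absorbed into the right action on $M_1$ or into the left action on $M_2$ with the same result after $q$, which follows from the module axioms of $M_1,M_2$ together with naturality and coherence of the $\beta_\xi$. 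This yields $\alpha^{M_1\bullet M_2}:T_3(A_1,M_1\bullet M_2,A_3)\to M_1\bullet M_2$; its two module axioms are verified by precomposing with the jointly epimorphic coequalizer projections, reducing them to the module axioms of $M_1,M_2$ and to $\beta$-coherence. The same technique, applied to $T_2$ and $T_3$ of module maps, gives the action of $\bullet$ on $2$-cells, and the interchange law is then formal.

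For associativity --- the substantive point, since $\bullet$ is not associative without the restriction to $\mathcal X$ --- I would show that $(M_1\bullet M_2)\bullet M_3$ and $M_1\bullet(M_2\bullet M_3)$ are, by the exactness lemma (used to commute the outer $T_2$ and $T_3$ past the inner coequalizers) and condition (2) (used to match terms via the $\beta_\xi$), both the colimit of one and the same reflexive $2\times 2$ diagram with vertices
$$T_3(M_1,M_2,M_3),\quad T_4(M_1,A_2,M_2,M_3),\quad T_4(M_1,M_2,A_3,M_3),\quad T_5(M_1,A_2,M_2,A_3,M_3),$$
the two bracketings simply forming this colimit in the two possible orders; an interchange-of-colimits argument then produces a natural associator, whose pentagon reduces to the coherence of the $\beta_\xi$. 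The unit constraints come from Lemma~\ref{lu}: the split-coequalizer presentation in its proof shows the identifications $A\bullet M\cong M\cong M\bullet A$ are module maps natural in $M$, and the triangle identity is one further $\beta$-coherence diagram. Assembling (ii)--(iv) gives the bicategory. I expect the main obstacles to be the exactness lemma of step~(i) --- where condition (3) must be carried through the $C\otimes$-coequalizers and conditions (4), (5) invoked to stay inside $\mathcal X$ --- and the interchange-of-colimits of step~(iii), which is the real use of condition (2); the remaining coherence bookkeeping for the associator and units, though routine, is where most of the labour lies.
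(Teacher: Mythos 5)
Your proposal is correct and follows essentially the same route as the paper's proof: condition (3) plus left-adjointness of the Kan extension gives that $T_2(X,-)$ and $T_2(-,X)$ preserve reflexive coequalizers, the standard bimodule argument then equips $M_1\bullet M_2$ with its action, condition (2) yields the associator, Lemma~\ref{lu} the units, and conditions (4)--(5) keep the composite in $\mathcal{X}$ as a quotient of $T_2(M_1,M_2)$. The paper compresses all of this into a few sentences (``by the usual argument''); you have simply written out the details it elides.
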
 

\begin{proof} 
The functor $T_2(X, -)$ can be written as
a composition of $X\otimes_C -$ and $Lan_p-$. $Lan_p-$ preserves coequalizers since it is a left adjoint and
$X\otimes_C -$ preserves reflexive coequalizers by condition 3. We deduce that
if $X$ is in $\mathcal{X}$, then $T_2(X, -)$ preserves reflexive coequalizers.
Similarly, $T_2(-, X)$ preserves reflexive coequalizers. So, if $A_1$ and $A_3$
are in $\mathcal{X}$, by the usual argument $M\bullet N$ can be made into a module from $(A_1, C_1)$ to $(A_3, C_3)$.
This works even when $\beta_\xi$ are not isomorphisms, although this may not be
evident. However given the condition of the theorem we can as well assume that
$\beta_xi$ are isomorphisms.

The role of 2-cells in our bicategory are played by module maps. The operation
$\bullet$ naturally extends to module maps giving the horizontal composition of
2-cells.

Under the condition 2, $T_2$ is associative up to coherent
isomorphisms. Then $\bullet$ is also associative up to coherent isomorphisms,
and these isomorphisms are module maps. 

The unit 1-cells are provided by Lemma \ref{lu}.

To get a
bicategory we only need to show that $M_1\bullet M_2$ is in $\mathcal{X}$
provided $M_1$ and $M_2$ are in $\mathcal{X}$. This is guaranteed by conditions
4 and 5 since is $M_1\bullet M_2$ is a quotient of $T_2(M_1, M_2)$ which
itself is a quotient of $M_1\otimes_C M_2$.

\end{proof} 

The operation $\bullet$ can be lifted to quantum modules between quantum
categories by considering the coequalizer \eqref{bullet} in
$\mathrm{ComonComod}\mathcal{V}(C_1^o\otimes C_3, C_1^o\otimes C_3)$.
Coequalizers in the latter are again computed as in $\mathcal{V}$. We
have:

\begin{theorem}\label{t2}
Let $\mathcal{X}$ be as above. Consider those quantum categories
and those quantum modules between them for which the underlying
comodules $C^o\otimes C' \spanarr C^o\otimes C'$ are in $\mathcal{X}$.
These form a bicategory under the composition $\bullet$.
\end{theorem}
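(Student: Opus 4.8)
The plan is to run the proof of Theorem~\ref{t1} one dimension up: replace ``comodule whose underlying object lies in $\mathcal{X}$'' by ``comonad in $\mathrm{Comod}\mathcal{V}$ whose underlying comodule lies in $\mathcal{X}$'', ``comodule map'' by ``comonad transformation'', and the category $\mathrm{Comod}\mathcal{V}(C_1^o\otimes C_3,C_1^o\otimes C_3)$ in which \eqref{bullet} is formed by the category $\mathrm{ComonComod}\mathcal{V}(C_1^o\otimes C_3,C_1^o\otimes C_3)$ of comonads on $C_1^o\otimes C_3$. The argument of Theorem~\ref{t1} is entirely diagrammatic and uses only the operations $T_n$, the associativity maps $\beta_\xi$, the action maps and Lemma~\ref{lu}; by the discussion of Section~\ref{sec2} every one of these lifts to the comonad level (left Kan extensions of comonads are comonads, the $\beta_\xi$ are the comparison maps of left Kan extensions taken in $\mathrm{Comon}\mathcal{B}$, and in the quantum setting $\alpha$, $\alpha_l$ and $\alpha_r$ are comonoid maps). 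So the only thing that has to be added is the verification that forming \eqref{bullet} among comonads, rather than among comodules, changes nothing.

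There are two points. First, the parallel pair in \eqref{bullet} is reflexive: the common section $T_3(M_1,\eta,M_2)\,\beta_{1+0+1} : T_2(M_1,M_2)\rightarrow T_3(M_1,A_2,M_2)$ obtained by inserting the unit $\eta$ of $A_2$ is carried to the identity by each of the two legs, by the left, respectively right, unit axiom of the module action --- this is exactly the computation that makes the coequalizer of Lemma~\ref{lu} split. Second, coequalizers in $\mathrm{ComonComod}\mathcal{V}(E,E)$ are computed as in $\mathcal{V}$; equivalently, the forgetful functor $\mathrm{ComonComod}\mathcal{V}(E,E)\rightarrow\mathrm{Comod}\mathcal{V}(E,E)$ creates reflexive coequalizers. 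This in turn follows from the fact, established in the proof of Theorem~\ref{t1}, that $T_2(X,-)$ and $T_2(-,X)$ preserve reflexive coequalizers for $X$ in $\mathcal{X}$, together with the standing hypothesis that the functors $X\otimes-$ preserve coreflexive equalizers. Consequently the coequalizer \eqref{bullet}, which already exists in $\mathrm{Comod}\mathcal{V}$ by Theorem~\ref{t1}, lifts uniquely to $\mathrm{ComonComod}\mathcal{V}(C_1^o\otimes C_3,C_1^o\otimes C_3)$: its underlying comodule is the composite $M_1\bullet M_2$ of Theorem~\ref{t1}, it inherits the $(A_1,A_3)$-action constructed there (again because $T_2(A_1,-)$ and $T_2(-,A_3)$ preserve these reflexive coequalizers), and since the coequalizer now lives among comonads that action is automatically a comonoid map, so $M_1\bullet M_2$ is a quantum module from $(A_1,C_1)$ to $(A_3,C_3)$. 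Its underlying comodule being unchanged, conditions~4 and~5 place it in $\mathcal{X}$.

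The remaining bicategory structure transfers verbatim from Theorem~\ref{t1}: under condition~2 the $\beta_\xi$ are isomorphisms of comonads, so $T_2$, and with it $\bullet$, is associative up to coherent isomorphism through comonad transformations; the unit $1$-cells and their coherence isomorphisms are those supplied by Lemma~\ref{lu}, whose defining split coequalizer is a diagram of comonad maps; and $\bullet$ applied to comonad transformations gives the horizontal composition of $2$-cells. I expect the main obstacle to be bookkeeping rather than substance: confirming that each structure map carried over from the proof of Theorem~\ref{t1} --- the comparison exhibiting $T_2(A_1,M_1\bullet M_2)$ as a coequalizer, the associators, the units of Lemma~\ref{lu} --- really is a comonad transformation. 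This comes down to reading off the universal properties of the left Kan extensions in $\mathrm{Comon}\mathcal{B}(E,E)$ rather than in $\mathcal{B}(E,E)$, exactly as in Section~\ref{sec2}, and once the creation of reflexive coequalizers by the forgetful functor is in place it presents no genuine difficulty.
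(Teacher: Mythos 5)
Your proposal is correct and follows essentially the same route as the paper, which proves Theorem~\ref{t2} only by the remark preceding it: form the coequalizer \eqref{bullet} in $\mathrm{ComonComod}\mathcal{V}(C_1^o\otimes C_3, C_1^o\otimes C_3)$, note that coequalizers there are computed as in $\mathcal{V}$, and transfer the rest of the argument of Theorem~\ref{t1} verbatim. Your additional observations (reflexivity of the pair via the unit $\eta$, and the lifting of the action and of the structure maps to comonad transformations) simply make explicit what the paper leaves implicit.
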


\section{The $\mathrm{Set}$ case}

We take $\mathcal{V}$ to be $Set$ with the
monoidal structure the cartesian product. Then, as ponted out for example in
\cite{DS}, $\mathrm{Comod}\mathcal{V} = \mathrm{Span}$. A comodule $X :
C^o\otimes C' \spanarr C^o\otimes C'$ is a span of the form:

\begin{equation}
\label{span}
\bfig
\Atriangle/>`>`/[X`C\times C'`C\times C';(t', s')`(t, s)`]
\efig
\end{equation}

\bigskip

A comonad structure on a span like this is the property $t' = t$ and $s' = s$. 

\noindent The diagram \eqref{pair} becomes

\begin{equation}
\label{setpair}
X_1\times_C X_2 \two^{t'_1pr_1}_{s'_2pr_2} C\times(X_1\times_CX_2)
\end{equation}

\bigskip

\noindent where $X_1\times_C X_2$ is the pullback of

$$X_1 \to^{t_1} C \to/<-/^{s_2} X_2$$ 

\bigskip

If $X_1$ and $X_2$ are comonads, then $t'pr_1 = tpr_1 = spr_2 = s'pr_2$. Thus,
in this case the two parallel arrows in \eqref{setpair} are equal. Then the
$C\times$-coequalizer exists and is $X_1\times_C X_2$ itself. It follows that
for the comonads spans the operations $T_n$, $n>2$, are defined and given by

$$T_n(X_1\ldots X_n) = X_1\times_{C_1}\ldots \times_{C_n-1}X_n\text{.}$$ 

\bigskip

\noindent $T_C() = C$ which is a span $C^o\otimes C \spanarr C^o\otimes C$ with
both legs the diagonal maps. The maps $\beta_\xi$ $\eqref{alpha}$ are obviously
isomorphisms. The functors $- \times_C X$ and $X\times_C -$ preserve coreflexive
equalizers since $Set$ is a locally closed category. Consequently the class of comonad spans satisfies all the conditions of Theorem \ref{t1}.

Quantum categories in $Set$
are the ordinary small categories \cite{DS}. Quantum modules are the
profunctors. The operation $\bullet$ coincides with the usual composition of
profunctors. The bicategory of the Theorem \ref{t1} is $\mathrm{Prof}$.

The category $Set$ can be replaced with any locally
closed finitely complete category. In this case quantum categories will be the
internal categories and the quantum modules will be the internal profunctors
\cite{KW}.

\section{Comodules of bialgebroids}  
In this section we consider our theory for
$\mathcal{V}=(k\operatorname{-Mod})^{\mathrm{op}}$ where $k$ is a commutative
ring. Note that in this case limits in $\mathcal{V}$ are the
colimits in $k\operatorname{-Mod}$, the cohom objects in $\mathcal{V}$ are the hom
objects in $k\operatorname{-Mod}$ and so on. The nomencluture is dual to that of
Section \ref{qm}. Nevertheless, we will freely refer to Section \ref{qm}, so
the reader should be somewhat careful.

The objects of $\mathrm{Comod}(k\operatorname{-Mod})^{\mathrm{op}}$ are the
$k$-algebras $R$, morphisms are the two sided modules between $k$-algebras. The
category $k\operatorname{-Mod}$ is closed, so right Kan extensions exist in
$\mathrm{Comod}(k\operatorname{-Mod})^{\mathrm{op}}$.

The operation $T_2$ is exactly the product $\times_R$ of Takeuchi
\cite{Takeuchi}. By \eqref{ccoc} it is equal to

$$\mathrm{Hom}_{(R^o\otimes R)}(R, X\otimes_R Y)\text{.}$$

\noindent It can be also computed using \eqref{pair} to yield:

$$X\times_R Y = \bigg\lbrace \sum_i m_i\otimes_R n_i \in M\otimes_R N : \sum_i
(x\otimes 1)m_i\otimes_R n_i = \sum_i
m_i\otimes_R (x\otimes 1)n_i \quad \forall x \in R \bigg\rbrace\text{.}$$

\noindent For $n = 0$:

$$T_R() = \mathrm{Hom}(R, R)\text{.}$$

\bigskip
 
The ternary operation of Takeuchi $(-\times_R-\times_R-)$ is a special case of a
slightly more general $(-\times_R-\times_{S}-)$, which is our $T_3$.
Takeuchi's maps

$$\alpha : (X\times_R Y)\times_R Z \to X\times_RY\times_RZ$$

$$\alpha' : X\times_R (Y\times_RZ) \to X\times_RY\times_RZ$$

\bigskip

\noindent are nothing but our $\beta_{2+1}$ and $\beta_{1+2}$. Generally we
set 

$$T_{(R_1, \ldots, R_{n-1})}(X_1\ldots X_{n}) =
X_1\times_{R_1}\ldots\times_{R_{n-1}} X_n\text{.}$$

\bigskip

For a right $T$ module we write $X^T$. Given a module 
$X : R^o\otimes R' \spanarr R^o\otimes R'$, when tensor products
$-\otimes_RX$ and $X\otimes_{R'}-$ are taken, $X$ is regarded as a left $R$-module and a right
$R'$-module by the right $R^o\otimes R'$ action, thus as $X^{R^o}$ and $X^{R'}$.
In contrast, when $X$ appears in homs the left $R^o\otimes R'$-action is used. 
 
\begin{lemma}
If $X^T$ and $Y^S$ are projective modules, then $(X\otimes_T Y)^S$ is a
projective module.
\end{lemma}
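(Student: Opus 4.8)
The plan is to reduce to the standard module-theoretic fact that projectivity is preserved under tensoring with a projective bimodule, using the characterization of a projective as a retract of a free module together with additivity of the tensor functor.

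First I would recall that $X^T$ projective means $X$ is a direct summand of a free right $T$-module; choose $F=\bigoplus_{i\in I}T$ and right-$T$-linear maps $s:X\to F$, $r:F\to X$ with $rs=1_X$. One should make sure these maps can be taken to respect the extra left module structure that $X$ carries (the one used to form $X\otimes_T Y$) — either by noting that a retraction onto a projective summand can be produced inside the relevant category of bimodules, or, more simply, by applying $-\otimes_T Y$ to $s$ and $r$ and observing that the resulting maps are left-linear automatically. Applying the additive functor $-\otimes_T Y$ then gives $X\otimes_T Y\to F\otimes_T Y\to X\otimes_T Y$ with composite the identity, and these are maps of right $S$-modules because the right $S$-action on both $X\otimes_T Y$ and $F\otimes_T Y$ is the one induced from $Y$, which $-\otimes_T Y$ preserves. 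Hence $(X\otimes_T Y)^S$ is a direct summand of $(F\otimes_T Y)^S$.

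Next I would identify $F\otimes_T Y=\big(\bigoplus_{i\in I}T\big)\otimes_T Y\cong\bigoplus_{i\in I}(T\otimes_T Y)\cong\bigoplus_{i\in I}Y$ as right $S$-modules, since tensoring commutes with direct sums. As $Y^S$ is projective and an arbitrary direct sum of projectives is projective, $(F\otimes_T Y)^S$ is projective; and a direct summand of a projective is projective, so $(X\otimes_T Y)^S$ is projective. An equally short alternative that avoids choosing a free module: use the adjunction $\mathrm{Hom}_S(X\otimes_T Y,N)\cong\mathrm{Hom}_T(X,\mathrm{Hom}_S(Y,N))$ and note the right-hand side is exact in $N$, being a composite of the exact functors $\mathrm{Hom}_S(Y,-)$ and $\mathrm{Hom}_T(X,-)$, exact because $Y^S$ and $X^T$ are projective.

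I expect no real obstacle of substance here; the argument is purely formal. The one point deserving care is the bookkeeping of module sides forced by the superscript conventions introduced just before the lemma (which of the $R^o\otimes R'$-actions is being used as a left action and which as a right action, and whether via $R$ or $R^o$): one must verify that with the intended reading $X\otimes_T Y$ genuinely carries a right $S$-module structure and that the retraction maps above are $S$-linear. Once those conventions are pinned down, the proof is exactly the computation sketched.
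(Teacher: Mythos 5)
Your proof is correct. Your primary argument --- realize $X^T$ as a retract of a free module $F=\bigoplus_{i\in I}T$, apply the additive functor $-\otimes_T Y$, and identify $F\otimes_T Y$ with $\bigoplus_{i\in I}Y$ as right $S$-modules --- is the elementary route; the paper instead disposes of the lemma in one line by observing that $(-\otimes_T Y)^S$ preserves projectives because it is left adjoint to the functor $\mathrm{Hom}_{S^o}(Y,-)^T$, which preserves epimorphisms when $Y^S$ is projective. That is exactly the adjunction argument you offer as your ``equally short alternative'' via $\mathrm{Hom}_S(X\otimes_T Y,N)\cong\mathrm{Hom}_T(X,\mathrm{Hom}_S(Y,N))$, so in substance you have reproduced the paper's proof and supplemented it with a more hands-on version. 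The adjoint-functor phrasing has the mild advantage of sidestepping the bookkeeping issue you flag about whether the splitting maps respect the auxiliary bimodule structures; but your resolution of that point is also the right one --- only right $S$-linearity of the retraction is needed for the stated conclusion, and that is automatic because the right $S$-action on $X\otimes_T Y$ and on $F\otimes_T Y$ comes entirely from the $Y$ factor, which $-\otimes_T Y$ manifestly preserves.
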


\begin{proof}
This follows from the fact that if $Y^S$ is projective then the functor 

$$(-\otimes_T Y)^{S} : \mathrm{Mod}(k, T) \to \mathrm{Mod}(T, S)$$

\bigskip

\noindent preserves projective objects since it is a left adjoint to an
epi-preserving functor $\mathrm{Hom}_{S^o}(Y, -)^T$.
\end{proof}

As an immediate consequence we have:

\begin{lemma}\label{l1} If $X_{i}^{R_i}$ are projective modules, then
$(X_1\otimes_{R_1}\ldots\otimes_{R_{n-1}} X_n)^{R_n}$ is a projective module.
\end{lemma}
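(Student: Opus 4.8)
The plan is a one-step induction on $n$, with the preceding lemma doing all the work. For $n=1$ the assertion is nothing but the hypothesis that $X_1^{R_1}$ is projective, and for $n=2$ it is exactly the preceding lemma applied with $T=R_1$ and $S=R_2$. So it suffices to pass from $n-1$ to $n$.

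Assume the statement for $n-1$ and put $W = X_1\otimes_{R_1}\ldots\otimes_{R_{n-2}}X_{n-1}$. By the discussion of composites of comodules in Section \ref{qm} (read in the dual language of this section), $W$ is again a comodule of the form $R_0^{o}\otimes R_{n-1}\spanarr R_0^{o}\otimes R_{n-1}$, whose right $R_{n-1}$-action is the one induced from the right $R_{n-1}$-action of $X_{n-1}$; the induction hypothesis says precisely that $W^{R_{n-1}}$ is projective. I would then apply the preceding lemma with $X=W$, $Y=X_n$, $T=R_{n-1}$ and $S=R_n$: both $W^{R_{n-1}}$ and $X_n^{R_n}$ are projective, so $(W\otimes_{R_{n-1}}X_n)^{R_n}$ is projective. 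Since the relative tensor product is associative, $W\otimes_{R_{n-1}}X_n = X_1\otimes_{R_1}\ldots\otimes_{R_{n-1}}X_n$, which is the claim for $n$.

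The only point requiring care is the bookkeeping of one-sided actions: one must check that the right $R_{n-1}$-module structure carried by the iterated tensor $W$ is exactly the one used when forming $W\otimes_{R_{n-1}}X_n$, so that the preceding lemma applies verbatim. This is immediate from the way coactions on a composite of comodules are induced, so there is no genuine obstacle; the lemma is \emph{immediate} precisely in the sense that it is this single induction.
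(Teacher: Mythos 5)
Your induction is exactly the argument the paper intends: it states Lemma~\ref{l1} as ``an immediate consequence'' of the preceding two-factor lemma, and the immediate consequence is precisely the inductive application of that lemma to $W=X_1\otimes_{R_1}\ldots\otimes_{R_{n-2}}X_{n-1}$ and $X_n$ with $T=R_{n-1}$, $S=R_n$. Your proposal is correct and matches the paper's (unwritten but clearly intended) proof.
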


We say that a right (left) module is a union of projectives if it is
union of all of its projective submodules. 

\begin{lemma}
\label{l}
If $Y^{T^o}$ is flat and $X^T$ and $Y^{S}$ are unions of projectives, then
$(X\otimes_R Y)^{S}$ is a union of projectives. 
\end{lemma}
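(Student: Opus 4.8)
The plan is to obtain the statement by a two-stage reduction, after which it becomes a purely module-theoretic assertion about direct summands; the genuine work is concentrated in that last assertion. Note that one cannot simply invoke Lemma~\ref{l1}, since $Y^{S}$ is only assumed to be a union of projectives, not itself projective.

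First I would reduce to the case in which $X^{T}$ is projective. Write $X^{T}$ as the directed union $\bigcup_{\alpha}P_{\alpha}$ of its projective right $T$-submodules. Since $Y^{T^{o}}$ is flat, the functor $-\otimes_{T}Y$ carries each inclusion $P_{\alpha}\hookrightarrow X$ to a monomorphism of right $S$-modules, and as tensor products commute with directed unions this exhibits $(X\otimes_{T}Y)^{S}$ as the directed union of the right $S$-submodules $(P_{\alpha}\otimes_{T}Y)^{S}$. A directed union of submodules each of which is a union of projectives is again a union of projectives, so it suffices to treat the case $X=P$ projective over $T$.

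Next, with $P$ projective over $T$, realize $P$ as a direct summand of a free right $T$-module $T^{(I)}$, via a split surjection $T^{(I)}\to P$. Applying $-\otimes_{T}Y$ and using $T^{(I)}\otimes_{T}Y\cong Y^{(I)}$ presents $(P\otimes_{T}Y)^{S}$ as a direct summand (retract) of $Y^{(I)}$, say with associated idempotent $e=e_{0}\otimes 1_{Y}$ for a $T$-linear idempotent $e_{0}$ on $T^{(I)}$. On the other hand, since $Y^{S}$ is a union of projectives, $Y^{(I)}$ is a directed union of the projective right $S$-submodules $Q^{(I)}$, with $Q$ running over the projective right $S$-submodules of $Y$; here one uses that arbitrary direct sums of projectives are projective and that every element of $Y^{(I)}$ is supported on a finite subset of $I$. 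Thus everything is reduced to the assertion that \emph{a direct summand of a module which is a union of projectives is again a union of projectives}.

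This last point is the main obstacle, because quotients of projectives need not be projective, so the naive image of a projective submodule under the retraction $e$ is of no use. The approach I would take is to exploit finiteness: a given element $\xi\in P\otimes_{T}Y\subseteq Y^{(I)}$ is supported on a finite subset of $I$, and $e_{0}$, applied to anything supported there, involves only finitely many ``matrix entries'' $t_{1},\dots ,t_{r}\in T$. One then seeks, using the directedness of the family of projective submodules of $Y^{S}$ (together with the fact that a finitely generated submodule of a union of projectives lies inside a single projective submodule), a projective right $S$-submodule of $Y^{(I)}$ which contains $\xi$ and is carried into itself by $e$; then $e$ restricts there to an idempotent whose image is a projective direct summand lying inside $(P\otimes_{T}Y)^{S}$ and containing $\xi$. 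Producing such a submodule — reconciling ``large enough to capture $\xi$'' with ``closed enough to be $e$-stable'' without sacrificing projectivity — is exactly where the hypotheses on $Y$ (flatness of $Y^{T^{o}}$ and $Y^{S}$ a union of projectives) must be used most carefully, and I expect this to be the delicate step.
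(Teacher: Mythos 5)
There is a genuine gap, and it sits exactly where you say it does: your entire argument funnels into the claim that \emph{a direct summand of a union of projectives is again a union of projectives}, and you do not prove that claim --- you only sketch a strategy for it and acknowledge it as ``the delicate step.'' That claim is the whole content of your reduction, and it is far from clear: the retraction $e$ need not carry a projective submodule of $Y^{(I)}$ to a projective submodule, an $e$-stable projective submodule need not exist a priori, and nothing in your outline isolates how flatness of $Y^{T^{o}}$ would enter at that stage. As written, the proposal replaces the lemma by an unproved statement of comparable (arguably greater) difficulty, so it does not constitute a proof.

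The detour through a free presentation of $P$ is also unnecessary, and avoiding it dissolves the problem. Your first reduction (writing $X=\bigcup_{\alpha}P_{\alpha}$ and using flatness of $Y^{T^{o}}$ to see that $X\otimes_{T}Y$ is the directed union of the $(P_{\alpha}\otimes_{T}Y)^{S}$) is correct and agrees with the paper. At that point you should perform the symmetric decomposition on the \emph{other} factor: write $Y=\bigcup_{j}Y_{j}$ with $Y_{j}^{S}$ projective; since each $P_{\alpha}$ is projective, hence flat, as a right $T$-module, the maps $P_{\alpha}\otimes_{T}Y_{j}\to P_{\alpha}\otimes_{T}Y$ are injective, so $X\otimes_{T}Y$ is the directed union of the submodules $P_{\alpha}\otimes_{T}Y_{j}$. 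Each $(P_{\alpha}\otimes_{T}Y_{j})^{S}$ is projective by the preceding lemma (the one asserting that $(X\otimes_{T}Y)^{S}$ is projective when $X^{T}$ and $Y^{S}$ are), which is applicable here precisely because both factors are now projective. This is the paper's argument; note that your opening remark that one ``cannot simply invoke'' the projectivity lemma is only half right --- one cannot invoke it for $Y$ itself, but one can and should invoke it for each projective submodule $Y_{j}$ of $Y$.
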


\begin{proof}
We can write $X=colimX_i$ and $Y=colimY_j$, where $X_i^T$ and $Y_i^S$ are
projective modules and the colimits are taken over filtered diagrams
whose arrows are injections. We have 

$$X \otimes_T Y = colimX_i \otimes_T Y =
colimX_i \otimes_T Y_j$$ 

\bigskip

\noindent The latter colimit is over a filtered diagram whose arrows are
injections again since $Y^{T^o}$ and $X_i^T$ are flat. Then $(X\otimes_R Y)^S$
is a union of projectives since each of $(X_i\otimes_T Y_j)^S$ is projective by
the previous lemma.
\end{proof}

\begin{lemma}\label{l2} If the $X_{i}^{R_i}$ are unions of
projectives and the $X_{i}^{R_i^o}$ are flat, then the right module
$(X_1\otimes_{R_1}\ldots\otimes_{R_{n-1}} X_n)^{R_n}$ is a union of projectives.
\end{lemma}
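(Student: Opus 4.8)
The plan is to prove the lemma by induction on $n$, with Lemma \ref{l} doing all of the real work at each step. For $n = 1$ there is nothing to prove: the statement reads ``$(X_1)^{R_1}$ is a union of projectives'', which is one of the hypotheses. (Equally one could start the induction at $n = 2$, where the assertion is exactly Lemma \ref{l} applied with $T = R_1$ and $S = R_2$.)

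For the inductive step I would bracket the iterated tensor product from the right,
$$X_1\otimes_{R_1}\ldots\otimes_{R_{n-1}} X_n \;=\; Z\otimes_{R_{n-1}} X_n\text{,}\qquad Z := X_1\otimes_{R_1}\ldots\otimes_{R_{n-2}} X_{n-1}\text{.}$$
The factors $X_1,\ldots,X_{n-1}$ form a sub-family of the $X_i$, so they satisfy all the hypotheses of the lemma over the index range $1,\ldots,n-1$; hence, by the inductive hypothesis, $Z^{R_{n-1}}$ is a union of projectives. Now invoke Lemma \ref{l} with $X = Z$, $T = R_{n-1}$, $Y = X_n$ and $S = R_n$. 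Its three hypotheses are met: $X^T = Z^{R_{n-1}}$ is a union of projectives (just established); $Y^S = X_n^{R_n}$ is a union of projectives (assumed); and $Y^{T^o}$, which is $X_n$ viewed on its $R_{n-1}$-side, is flat, which is precisely the flatness assumption on $X_n$. Lemma \ref{l} therefore gives that $(Z\otimes_{R_{n-1}} X_n)^{R_n} = (X_1\otimes_{R_1}\ldots\otimes_{R_{n-1}} X_n)^{R_n}$ is a union of projectives, completing the induction.

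The choice to peel $X_n$ off from the right is what keeps the argument clean: with this bracketing the inductive hypothesis directly delivers the ``union of projectives'' property needed for the left-hand factor $Z$, so we never have to carry an auxiliary flatness statement about a long tensor product through the recursion — the only flatness ever used is the one already assumed for the individual $X_i$. Consequently I expect no genuine obstacle here beyond Lemma \ref{l} itself (and, through its proof, the projective-case lemmas, in particular Lemma \ref{l1}); the one thing to watch is purely bookkeeping, namely keeping straight which module structure on each $X_i$ is the ``right'' one and which is the ``left'' one as the factors are tensored together, so that at each application of Lemma \ref{l} the flatness hypothesis lands on exactly the side that plays the role of $Y^{T^o}$.
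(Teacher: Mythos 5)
Your proof is correct and is exactly the argument the paper intends: Lemma \ref{l2} is presented as an immediate consequence of Lemma \ref{l} (in parallel with how Lemma \ref{l1} follows from its predecessor), and the right-to-left induction you give, peeling off $X_n$ and applying Lemma \ref{l} with $T=R_{n-1}$, $S=R_n$, is the standard way to make that precise. Your closing remark about which side of each $X_i$ carries the flatness hypothesis is the only point of care, and you have handled it correctly.
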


The next two lemmas are slight modifications of Lemma \ref{l1} and Lemma
\ref{l2} and their proofs are similar.

\begin{lemma}\label{ll1} If the $X_{i}^{R^o_i}$ are projective modules, then $(X_1\otimes_{R_1}\ldots\otimes_{R_{n-1}} X_n)^{R^o_n}$ is a
projective module.
\end{lemma}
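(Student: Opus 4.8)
The plan is to follow the proof of Lemma~\ref{l1} essentially verbatim, with every ring $R_i$ replaced by its opposite $R_i^o$ — equivalently, with the roles of the two module structures carried by each $X_i$ (as a two-sided comodule over $R_{i-1}^o\otimes R_i$) systematically interchanged, which is legitimate under the biduality conventions fixed in Section~\ref{qm}. Recall that Lemma~\ref{l1} was obtained by induction on $n$ from the unlabelled binary statement preceding it: if $X^T$ and $Y^S$ are projective, then $(X\otimes_T Y)^S$ is projective, the mechanism being that $(-\otimes_T Y)^S$ is a left adjoint — to the internal-hom functor $\mathrm{Hom}_{S^o}(Y,-)^T$ — and that this right adjoint is epi-preserving precisely because $Y^S$ is projective, so the left adjoint preserves projective objects.

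First I would record the opposite-sided binary statement: if $X^{T^o}$ and $Y^{S^o}$ are projective modules, then $(X\otimes_T Y)^{S^o}$ is projective. Its proof is the mirror image of the one above: the functor sending $X$ to $X\otimes_T Y$ equipped with the $S^o$-structure inherited from $Y$ is again a left adjoint to the corresponding internal-hom functor, and that functor is epi-preserving because $Y^{S^o}$ is projective, so projectivity of $X^{T^o}$ is carried through.

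Then I would induct on $n$. For $n=1$ there is nothing to prove, since $T_1(X_1)=X_1$ and the conclusion is the hypothesis. For the inductive step, use associativity of the iterated tensor product to split off the last factor $X_n$, as in the proof of Lemma~\ref{l1}, apply the inductive hypothesis to the remaining block $X_1\otimes_{R_1}\cdots\otimes_{R_{n-2}}X_{n-1}$ to see that it is projective for the relevant one-sided structure, and then invoke the opposite-sided binary statement to conclude that $(X_1\otimes_{R_1}\cdots\otimes_{R_{n-1}}X_n)^{R_n^o}$ is projective. (Equivalently, one may feed the ``reversed and opposited'' chain directly into Lemma~\ref{l1}.)

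The main obstacle is bookkeeping rather than mathematics. Since each $X_i$ is a comodule over $R_{i-1}^o\otimes R_i$, it carries two commuting module structures, and one must verify that the structures consumed by the successive tensor products $\otimes_{R_j}$ are disjoint from the structure ($R_n^o$) recorded in the statement, and that the decorations on the adjoint internal-hom functors come out consistently with the conventions of Section~\ref{qm}. Once these are unwound — which is exactly what is hidden behind ``their proofs are similar'' — the adjoint-functor/projectivity argument is formally identical to that of Lemma~\ref{l1}.
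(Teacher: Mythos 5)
Your proposal matches the paper's: the paper gives no separate argument for Lemma~\ref{ll1}, stating only that it is a ``slight modification'' of Lemma~\ref{l1} with a ``similar'' proof, and your mirrored adjunction argument (oppose the rings, or equivalently reverse the chain and feed it into Lemma~\ref{l1}) is exactly that modification. The only caveat is the bookkeeping you already flag yourself: the surviving one-sided structure in the mirrored statement is inherited from the opposite end of the tensor chain, so the induction most naturally peels factors off from the left --- or, as you note, one simply quotes Lemma~\ref{l1} for the reversed, opposited chain.
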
 

\begin{lemma} If the $X_{i}^{R^o_i}$ are unions of
projectives and the $X_{i}^{R_i}$ are flat, then the right module
$(X_1\otimes_{R_1}\ldots\otimes_{R_{n-1}} X_n)^{R^o_n}$ is a union of
projectives.
\end{lemma}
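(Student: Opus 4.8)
The plan is to rerun the proof of Lemma~\ref{l2}, interchanging throughout the two module structures carried by each $X_i$ (equivalently, replacing each ring $R_i$ by its opposite); as the paper has already noted, there is no new idea here, only bookkeeping. Concretely, I would first isolate the ``opposite-sided'' analogue of Lemma~\ref{l}: if $X^{S^o}$ and $Y^{T^o}$ are unions of projectives and the appropriate one-sided structure is flat, then $(X\otimes_T Y)^{S^o}$ is a union of projectives. The proof is the same filtered-colimit argument as for Lemma~\ref{l}: write $X=\operatorname{colim}X_i$ and $Y=\operatorname{colim}Y_j$ as filtered colimits, along injections, of modules whose relevant one-sided structures are projective; commute $\otimes_T$ past these colimits to get $X\otimes_T Y=\operatorname{colim}_{i,j}X_i\otimes_T Y_j$; observe that the transition maps of this colimit remain injections because the pieces are flat (projective $\Rightarrow$ flat) on the side along which one is gluing; and invoke the $n=2$ case of Lemma~\ref{ll1}, in place of the $n=2$ case used for Lemma~\ref{l1}, to see that each $(X_i\otimes_T Y_j)^{S^o}$ is projective.

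Then I would iterate this binary statement along $X_1\otimes_{R_1}\cdots\otimes_{R_{n-1}}X_n$, exactly as Lemma~\ref{l2} iterates Lemma~\ref{l}, to conclude that $(X_1\otimes_{R_1}\cdots\otimes_{R_{n-1}}X_n)^{R_n^o}$ is a union of projectives. Tracing which flatness hypothesis the colimit step actually consumes at each stage, one finds it is precisely ``$X_i^{R_i}$ flat'' (the structure that, in Lemma~\ref{l2}, was instead ``$X_i^{R_i^o}$ flat''), while the union-of-projectives hypothesis is used on the structures $X_i^{R_i^o}$; these are exactly the hypotheses in the statement.

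Alternatively — and this may be the slicker route — one can deduce the lemma from Lemma~\ref{l2} by passing to opposite algebras: once one checks that $(X_1\otimes_{R_1}\cdots\otimes_{R_{n-1}}X_n)^{\mathrm{op}}$ is the iterated tensor product $X_n^{\mathrm{op}}\otimes_{R_{n-1}^{\mathrm{op}}}\cdots\otimes_{R_1^{\mathrm{op}}}X_1^{\mathrm{op}}$, and that projectivity, flatness, and being a union of projectives are all invariant under $M\mapsto M^{\mathrm{op}}$, the hypotheses here for $(X_i)$ over $(R_i)$ translate into the hypotheses of Lemma~\ref{l2} for the reversed sequence over $(R_i^{\mathrm{op}})$, and the conclusion transports back. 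In either approach I expect the only real obstacle to be purely notational: keeping straight, stage by stage, which of the two structures on each $X_i$ is acting as ``right module over the ring tensored over'' and which is the one whose projectivity or flatness is assumed, so that the side on which flatness is needed is the side on which it has been hypothesized. No estimate, universal property, or genuinely new construction enters.
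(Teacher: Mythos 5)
Your proposal is correct and coincides with the paper's own treatment: the paper offers no separate argument for this lemma, stating only that it is a slight modification of Lemma~\ref{l2} whose proof is similar, which is exactly the side-swapped filtered-colimit argument (using Lemma~\ref{ll1} in place of Lemma~\ref{l1}, and consuming flatness of $X_i^{R_i}$ to keep the transition maps injective) that you spell out. Your bookkeeping of which one-sided structure must be flat and which must be a union of projectives is the right one, and the alternative reduction via opposite algebras is also valid.
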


Recall from ring theory that a ring $T$ is called right hereditary if any
submodule of a projective right module over $T$ is again projective. $T$ is
called hereditary if both $T$ and $T^o$ are right hereditary.

\begin{lemma}\label{ll}
Every submodule of a union of projectives over a hereditary
ring is a union of projectives.
\end{lemma}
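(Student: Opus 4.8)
The plan is to reduce the statement to the defining property of a union of projectives, applied one element at a time, and then to invoke hereditariness exactly once. Write $T$ for the hereditary ring, $M$ for the given module which is a union of projectives, and $N\subseteq M$ for the submodule whose structure we wish to control. I would begin by unwinding the definition: $M$ being a union of (all its) projective submodules means precisely that every $x\in M$ lies in some projective submodule $P\subseteq M$; this is the only feature of $M$ we shall use.

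Next, fix $x\in N$. Since $x\in M$ and $M$ is a union of projectives, choose a projective submodule $P_x\subseteq M$ with $x\in P_x$. Then $N\cap P_x$ is a submodule of $N$ that contains $x$, and the one substantive point is that $N\cap P_x$ is projective: it is a submodule of the projective $T$-module $P_x$, and $T$ is right hereditary, so submodules of projective right modules are projective. Carrying this out for every $x\in N$ produces a family $\{N\cap P_x\}_{x\in N}$ of projective submodules of $N$ with $\bigcup_{x\in N}(N\cap P_x)=N$, which is exactly the assertion that $N$ is a union of projectives. The same argument applies verbatim to left modules using that $T^{o}$ is right hereditary.

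I do not expect a genuine obstacle here; the argument is essentially the lattice-theoretic observation that hereditariness is inherited under intersecting a projective submodule with an arbitrary submodule. The only points to keep straight are, first, that ``union of all projective submodules'' is equivalent to ``every element lies in a projective submodule'' (so one really may work element by element, and no directedness of the family is needed), and, second, the bookkeeping of sides --- for a right $T$-module right hereditariness suffices, but the lemma is phrased over a hereditary ring because the later applications, combining with the preceding lemmas on tensor products, exploit both the right and the left module structures. It is worth noting explicitly that no flatness, finiteness, or filteredness hypotheses enter this particular lemma.
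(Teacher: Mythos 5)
Your argument is correct and is precisely the one the paper leaves implicit (its proof is just ``Obvious''): intersect a projective submodule containing a given element with the submodule $N$, and use right hereditariness to see the intersection is projective. Nothing further is needed.
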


\begin{proof}
Obvious.
\end{proof}

Various conditions under which $\beta_{2+1}$ and $\beta_{1+2}$ are
bijective were given in \cite{Takeuchi} and \cite{Sweedler}. We will
obtain a similar result for an arbitrary partition $\xi=m_1+\ldots m_n$ with
$m_i > 0$.

\begin{theorem}
For any partition $\xi=m_1+\ldots m_n$ with
$m_i > 0$, the map $\beta_\xi$ $\eqref{alpha}$ is an isomorphism, if the base
rings are hereditry and each of $X_{ij}$ is a union of projectives both as a
left module and a right module.
\end{theorem}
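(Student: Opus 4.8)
The plan is to realise both the source and the target of $\beta_\xi$ as submodules of one fixed module, the total iterated tensor product
$$B \;=\; X_{11}\otimes\cdots\otimes X_{nm_n}$$
(the tensor products taken over the successive base rings, in the order in which the $X_{ij}$ are listed), and to recognise $\beta_\xi$ as the canonical map between these two submodules of $B$. By the description \eqref{pair} of $T_2$ together with the iterated application of Lemma \ref{lemma}, $T_m(X_{11},\ldots,X_{nm_n})$ is the submodule of $B$ consisting of those elements satisfying Takeuchi's condition at each of the internal base rings; and $T_n\big(T_{m_1}(X_{11},\ldots,X_{1m_1}),\ldots,T_{m_n}(X_{n1},\ldots,X_{nm_n})\big)$ is a submodule of the tensor product $T_{m_1}(X_{11},\ldots,X_{1m_1})\otimes\cdots\otimes T_{m_n}(X_{n1},\ldots,X_{nm_n})$ (over the appropriate base rings), which in turn embeds in $B$, and there it is carved out by the same total family of conditions: those internal to each block, imposed by the factors $T_{m_i}$, together with those between consecutive blocks, imposed by the outer $T_n$. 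Once both objects are seen to be genuine submodules of $B$ they coincide, and $\beta_\xi$ is an isomorphism.

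The work therefore lies in justifying that the comparison maps into $B$ are injective, and this is exactly where the hypotheses enter. First I would record two module-theoretic facts: a union of projective submodules is flat, being a filtered colimit of projectives along monomorphisms; and, by Lemma \ref{ll}, a submodule of a union of projectives over a hereditary ring is again a union of projectives, hence flat. Now Lemma \ref{l2} and its left-module analogue, applied to the hypothesis that each $X_{ij}$ is a union of projectives on both sides (hence flat on both sides), show that each block tensor product $X_{i1}\otimes\cdots\otimes X_{im_i}$ is a union of projectives on both sides; and then, by the remark just made, each $T_{m_i}(X_{i1},\ldots,X_{im_i})$ --- being a submodule of such a block tensor product --- is a union of projectives on both sides, in particular flat on both sides.

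With every module in sight flat on the side along which it is tensored, flat base change preserves the relevant monomorphisms and kernels. Consequently the canonical map $T_{m_1}(X_{11},\ldots,X_{1m_1})\otimes\cdots\otimes T_{m_n}(X_{n1},\ldots,X_{nm_n})\to B$ is injective, being obtained by tensoring the inclusions $T_{m_i}(X_{i1},\ldots,X_{im_i})\hookrightarrow X_{i1}\otimes\cdots\otimes X_{im_i}$ one block at a time. For the same reason each intermediate $C\otimes$-equalizer produced by Lemma \ref{lemma} in the computation of $T_m(X_{11},\ldots,X_{nm_n})$ stays a subobject of the ambient tensor product, so $T_m(X_{11},\ldots,X_{nm_n})$ is precisely the intersection, inside $B$, of the equalizers expressing the conditions at the internal base rings; and, again by flat base change preserving kernels, the tensor product of the $T_{m_i}(X_{i1},\ldots,X_{im_i})$ is precisely the intersection, inside $B$, of the equalizers for the within-block conditions, after which the outer $T_n$ cuts out exactly the between-block conditions. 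Assembling these identifications, the two submodules of $B$ agree, and $\beta_\xi$ is an isomorphism.

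It is cleaner to package the last step as an induction: a general $\beta_\xi$ factors, by its coherence with Takeuchi's basic associativity maps $\beta_{1+2}$ and $\beta_{2+1}$, as a composite of such basic comparisons, and over hereditary base rings the class of comodules that are unions of projectives on both sides is closed under $\otimes_R$ and under every operation $T_k$ --- once more because passing to a submodule does no harm, by Lemma \ref{ll} --- so every intermediate term occurring in that factorisation still satisfies the hypotheses and each basic comparison in it is an isomorphism by the argument above. I expect the only genuine difficulty to be organisational: keeping track of the several one-sided module structures entering the various Takeuchi conditions when they are transported into the common module $B$, and verifying at each stage that the flat-base-change step does identify the iterated equalizer with the intersection of equalizers.
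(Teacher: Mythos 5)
Your overall strategy --- realise both sides of $\beta_\xi$ as subobjects of the total tensor product $B=X_{11}\otimes\cdots\otimes X_{nm_n}$ and compare them there --- is a reasonable way to organise the argument, and your supporting observations are correct: a union of projectives is flat (it is a filtered colimit of projectives along injections), and over hereditary base rings the class is closed under passage to submodules by Lemma \ref{ll}, so the block products $L_i=X_{i1}\otimes_{R_{i1}}\cdots\otimes_{R_{im_i-1}}X_{im_i}$ and their subobjects $T_{m_i}(X_{i1},\ldots,X_{im_i})$ are unions of projectives, hence flat, on both sides. That does give the injectivity of $T_{m_1}(\ldots)\otimes_{R_1}\cdots\otimes_{R_{n-1}}T_{m_n}(\ldots)\to B$.

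The gap is in the step you dispatch with ``flat base change preserves kernels.'' The Takeuchi condition at an internal ring is not the kernel of a map between tensor products: $X\times_R Y$ is the kernel of a map $X\otimes_R Y\to\mathrm{Hom}_{R^o\otimes R}(R,\,X\otimes_R Y)$, an equalizer landing in a Hom-object. So to identify ``the within-block condition imposed on $B$'' with ``the within-block condition imposed on $L_i$ and then tensored with the other blocks'' you must commute $\mathrm{Hom}_S(A,-)$ past $-\otimes_T Z$. Flatness of $Z$ gives $\ker(f)\otimes_T Z=\ker(f\otimes 1_Z)$, but $f\otimes 1_Z$ has target $\mathrm{Hom}_S(A,Y)\otimes_T Z$, whereas the condition you actually need to impose on $Y\otimes_T Z$ is the kernel of a map into $\mathrm{Hom}_S(A,Y\otimes_T Z)$; these two kernels coincide only if the canonical map $\mathrm{Hom}_S(A,Y)\otimes_T Z\to\mathrm{Hom}_S(A,Y\otimes_T Z)$ is at least injective, and for a general flat $Z$ it need not be. This interchange is exactly where the projectivity hypothesis is consumed: the paper proves $\mathrm{Hom}_S(A,Y\otimes_T Z)\cong\mathrm{Hom}_S(A,Y)\otimes_T Z$ for $A$ finitely generated and the relevant module projective, iterates it to obtain \eqref{iso}, identifies $\beta_\xi$ as the result of applying $\mathrm{Hom}(A_1\otimes\cdots\otimes A_{n-1},-)$ to \eqref{iso}, and only then passes from projectives to unions of projectives by the filtered-colimit/flatness argument that you also invoke. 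Without establishing this Hom--tensor interchange, your two ``submodules of $B$'' are not known to be cut out by compatible conditions, and the assertion that they coincide is precisely the statement to be proved. The same gap persists in your closing reduction to composites of $\beta_{1+2}$ and $\beta_{2+1}$, since each basic comparison already requires the interchange.
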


\begin{proof}  
Suppose that $S$ and $T$ are rings and $A : S \spanarr I$, $Y : S \spanarr T$
and $Z : I \spanarr T^o$ are modules. There is a natural map:

$$\mathrm{Hom}_{S}(A, Y\otimes_{T}Z) \to \mathrm{Hom}_S(A,
Y)\otimes_{T}Z$$ 

\noindent It can be easily seen that if $A$ is a finitely generated left $S$
module and $Z^{{T}^{op}}$ is projective then this map is an isomorphism.
Also, if $A$ is a finitely generated left $S$-module and $Y^T$ is projective
then there is an isomorphism:

$$\mathrm{Hom}_{S}(A, Y\otimes_{T}Z) \to Y\otimes_{T}\mathrm{Hom}_S(A,
Y)$$

\noindent Suppose that $S^i$ are rings for $i = 1\ldots n$. If for each $i$,
$A_{i}$ is a finitely generated left $S_i$-module and $L_{i}^{T_i}$ and
$\mathrm{Hom}_{A_i}(A_i, L_i))^{T_{i-1}^o}$ are projective, then we have:

$$
\mathrm{Hom}_{S_1\otimes\ldots\otimes S_n}(A_1\otimes\ldots\otimes A_n,
L_1\otimes_{T_1}\ldots\otimes_{T_{n-1}} L_n) \cong $$

$$\mathrm{Hom}_{S_1\otimes\ldots\otimes S_{n-1}}(A_1\otimes\ldots\otimes A_{n-1},
\mathrm{Hom}_{A_n}(A_n, L_1\otimes_{T_1}\ldots\otimes_{T_{n-1}} L_n)) \cong $$

$$\mathrm{Hom}_{S_1\otimes\ldots\otimes S_{n-1}}(A_1\otimes\ldots\otimes A_{n-1},
L_1\otimes_{T_1}\ldots\otimes_{T_{n-1}} \mathrm{Hom}_{A_n}(A_n, L_n)) \cong $$

$$\mathrm{Hom}_{S_1\otimes\ldots\otimes S_{n-1}}(A_1\otimes\ldots\otimes A_{n-1},
L_1\otimes_{T_1}\ldots\otimes_{T_{n-2}}L_{n-2})\otimes_{T_{n-1}}\mathrm{Hom}_{A_n}(A_n,
L_n)
\text{.}$$ 

\bigskip

\noindent By induction on $n$ we get

\begin{equation}\label{iso}
\mathrm{Hom}_{S_1\otimes\ldots\otimes
S_n}(A_1\otimes\ldots\otimes A_n, L_1\otimes_{T_1}\ldots\otimes_{T_{n-1}} L_n) \cong
\end{equation}
$$\mathrm{Hom}_{S_1}(A_1,
L_1)\otimes_{T_1}\ldots\otimes_{T_{n-1}} \mathrm{Hom}_{S_n}(A_n, L_n)\text{.}$$

\bigskip

Let now $X_{ij}$ be modules as in \eqref{alpha}. So we have rings $R_{ij}$, $0
\leq i \leq n$, $1 \leq j \leq m_n$ with $R_{im_i} = R_{(i+1)0} = R_i$ and $X_{ij}$ is a module
$R_{ij-1}^o\otimes R_{ij} \spanarr R_{ij-1}^o\otimes R_{ij}$. In the above, set

$$T_i = R_{i1}\otimes\ldots\otimes R_{im_i-1}$$
$$S_i = R_{i1}^o\otimes R_{i1}\otimes\ldots\otimes R_{im_i-1}^o\otimes
R_{im_i-1}\text{,}$$ $$A_i = e\otimes \ldots\otimes e\text{,}$$
$$L_i = X_{i1}\otimes_{R_{i1}}\ldots\otimes_{R_{im_i-1}}X_{im_i}\text{.}$$

\noindent Then 

$$\mathrm{Hom}_{S_i}(A_i, L_i) =
X_{i0}\times_{R_{i1}}\ldots\times_{R_{im_i}}X_{im_i}\text{.}$$ 

\noindent If $X_{ij}^{R_{ij}}$ are projective then $L_i$ is projective by Lemma
\ref{l1}. If $X_{ij}^{R_{ij}^o}$ are hereditry and $R_i^o$ is right hereditry
then $\mathrm{Hom}_{S_i}(A_i, L_i)^{R_i^o}$ is hereditry by Lemma \ref{ll1} and
\ref{ll}. It follows that there is an isomorphism \eqref{iso}. The map
$\beta_{m_1+\ldots+m_n}$ is the result of application of

$$\mathrm{Hom}_{(S_1\otimes\ldots\otimes S_{n-1})}(A_1\otimes\ldots A_{n-1},
-)$$ 

\noindent to this isomorphism and hence an isomorphism itself. 

Suppose now that the $X_{ij}$ are unions of projectives both as left and right
modules and a the base rings $R_{ij}$ are hereditry. Each of $X_{ij}$ can be
written as a union of submodules which are projective both as left and
right modules. Then since $R_{ij}$ and $L_i$ are unions of projectives
and hence flat both sides in \eqref{alpha} are unions of submodules obtained by
varying the arguments in \eqref{alpha} to projective submodules. Restrictions
of \eqref{alpha} to these submodules are isomorphisms hence \eqref{alpha} is an
isomorphism itself.
\end{proof}

Algebroids and quantum
categories in $(k\operatorname{-Mod})^{\mathrm{op}}$ are the
$\times_A$-coalgebras and $\times_R$-bialgebras of Takeuchi \cite{Takeuchi}, these were later called
coalgebroids and bialgebroids respectively.

\begin{definition}
A comodule between coalgebroids is a module between algebroids in
$(k\operatorname{-Mod})^{\mathrm{op}}$.
\end{definition}

\begin{definition}
A comodule algebra between bialgebroids is a quantum module between
quantum categories in $(k\operatorname{-Mod})^{\mathrm{op}}$.
\end{definition}

\bigskip

The operation $\bullet$ is defined by the equalizer

$$M_1\bullet M_2 \to M_1\times_R M_2 \two M_1\times_RA\times_R N_M$$

\bigskip 

The class of all those modules $X : R^o\otimes R\spanarr R^o\otimes R$ where $R$
is a hereditary ring and $X_R$ and $X_{R^o}$ are unions of projectives
satisfies all the conditions of Theorem \ref{theorem}. From Theorem \ref{t1} and
Theorem \ref{t2} we get:

\begin{theorem}
Coalgebroids $(A, R)$ with $R$ a hereditary ring and $A^R$ and
$A^{R^o}$ unions of projectives, and comodules between bialgebroids
$M$ with $M^R$ and $M^{R^o}$ unions of projectives, form a bicategory.
\end{theorem}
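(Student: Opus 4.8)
The plan is to exhibit the stated class of modules as a legitimate choice of $\mathcal{X}$ in Theorem~\ref{t1}. Let $\mathcal{X}$ consist of all comodules $X : R^{o}\otimes R\spanarr R^{o}\otimes R$ for which $R$ is hereditary and both $X_{R}$ and $X_{R^{o}}$ are unions of projectives. By hypothesis the underlying comodule of every coalgebroid $(A,R)$ and of every comodule between the bialgebroids under consideration then lies in $\mathcal{X}$ (the base rings being the hereditary $R$'s of the objects). So once $\mathcal{X}$ is shown to satisfy conditions 1--5 of Theorem~\ref{t1}, Theorem~\ref{t1} produces the bicategory of algebroids in $\mathcal{V}=(k\operatorname{-Mod})^{\mathrm{op}}$ (that is, coalgebroids) whose underlying comodules are in $\mathcal{X}$, and Theorem~\ref{t2} lifts this to the quantum case (bialgebroids and comodules between them), which is exactly the asserted bicategory. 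Throughout one reads each condition through the duality $\mathcal{V}=(k\operatorname{-Mod})^{\mathrm{op}}$: reflexive coequalizers in $\mathcal{V}$ are coreflexive equalizers in $k\operatorname{-Mod}$, cohoms are homs, and epimorphisms in $\mathcal{V}$ are monomorphisms in $k\operatorname{-Mod}$.

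Conditions 1 and 2 are already at hand. For condition 1, the remark following \eqref{coeqpair} shows that the left Kan extensions \eqref{Tn} exist as soon as $\mathrm{coHom}(C,X)$ exists for every $X$; here that object is $\mathrm{Hom}(R,X)$, which always exists since $k\operatorname{-Mod}$ is closed, so condition 1 holds for all comodules and in particular for those in $\mathcal{X}$ (this also justifies the running assumption that the $T_{n}$ are defined). For condition 2, membership of all the $X_{ij}$ in $\mathcal{X}$ is precisely the hypothesis of the theorem just proved---hereditary base rings and each $X_{ij}$ a union of projectives as both a left and a right module---so $\beta_{\xi}$ \eqref{alpha} is an isomorphism for every partition $\xi=m_{1}+\ldots+m_{n}$ with all $m_{i}>0$.

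The remaining conditions rest on the observation that a union of projective submodules is a filtered union of flat modules, hence flat; thus every $X\in\mathcal{X}$ is flat on either side. Consequently $X\otimes_{R}-$ and $-\otimes_{R}X$ are exact and in particular preserve coreflexive equalizers in $k\operatorname{-Mod}$, which is condition 3. Condition 4 follows from Lemma~\ref{l2} and its right-module analogue (the unnamed lemma immediately after Lemma~\ref{ll1}) in the case $n=2$: for $X,Y\in\mathcal{X}$ the relevant one-sided structures are flat, so those lemmas apply and give that $(X\otimes_{R}Y)_{R'}$ and $(X\otimes_{R}Y)_{R'^{o}}$ are again unions of projectives, while the outer base ring is one of the original hereditary rings; hence $X\otimes_{R}Y\in\mathcal{X}$. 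Finally, condition 5---closure of $\mathcal{X}$ under quotients in $\mathcal{V}$, i.e. under passage to submodules in $k\operatorname{-Mod}$---is exactly Lemma~\ref{ll}: a submodule of a union of projectives over a hereditary ring is again a union of projectives, and the base ring is unchanged.

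With conditions 1--5 verified, Theorem~\ref{t1} and Theorem~\ref{t2} yield the bicategory, completing the proof. The only delicate point is the book-keeping of the dualization: one must remember that for a module $X$ the right $R^{o}\otimes R'$-action is the one used inside tensor products whereas the left action is the one used inside homs, so that the hypothesis ``$X_{R}$ and $X_{R^{o}}$ are unions of projectives'' is indeed precisely what the lemmas of this section require; but no new argument is needed beyond assembling those lemmas.
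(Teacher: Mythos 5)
Your proposal is correct and takes essentially the same route as the paper: the paper's entire argument is the one--line assertion that this class $\mathcal{X}$ satisfies conditions 1--5 of Theorem~\ref{t1}, followed by an appeal to Theorems~\ref{t1} and~\ref{t2}. Your verification of the five conditions (closedness of $k\operatorname{-Mod}$ for 1, the preceding theorem on $\beta_\xi$ for 2, flatness of unions of projectives for 3, Lemma~\ref{l2} and its mirror for 4, and Lemma~\ref{ll} for 5) simply supplies the details the paper leaves implicit.
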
 
 
\begin{theorem}
Bialgebroids $(A, R)$ with $R$ a hereditary ring and $A^R$ and
$A^{R^o}$ unions of projectives, and comodule algebras between bialgebroids
$M$ with $M^R$ and $M^{R^o}$ unions of projectives, form a bicategory.
\end{theorem}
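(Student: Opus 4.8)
The plan is to obtain this as a direct application of Theorem \ref{t2}. All that must be supplied is a class $\mathcal{X}$ satisfying conditions 1--5 above, and the natural choice is the class of all comodules $X : R^o\otimes R' \spanarr R^o\otimes R'$ for which $R$ and $R'$ are hereditary rings and $X$ is a union of projectives both as a left module and as a right module. Since bialgebroids are exactly the quantum categories in $\mathcal{V} = (k\operatorname{-Mod})^{\mathrm{op}}$ and comodule algebras between them are exactly the quantum modules, once these five conditions are checked for this $\mathcal{X}$, Theorem \ref{t2} produces the asserted bicategory and nothing remains but to read off the conclusion through the dictionary of Section 5. Throughout I would keep the duality of that section in view: in $(k\operatorname{-Mod})^{\mathrm{op}}$ a coequalizer is an equalizer of $k$-modules, an epimorphism is a monomorphism, a reflexive coequalizer is a coreflexive equalizer, and a left Kan extension is a right Kan extension, hence a $\mathrm{Hom}$.

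Conditions 1 and 3 are essentially formal. Condition 1 holds because $k\operatorname{-Mod}$ is closed, so right Kan extensions exist in $\mathrm{Comod}(k\operatorname{-Mod})^{\mathrm{op}}$; concretely the extensions \eqref{Tn} are given by the iterated $\mathrm{Hom}$ formula of \eqref{ccoc}, which may be assembled by repeated use of Lemma \ref{lemma}. For condition 3, a union of projectives is flat, so for $X$ in $\mathcal{X}$ the functors $X\otimes_R -$ and $-\otimes_R X$ are exact and therefore preserve the coreflexive equalizers dual to the reflexive coequalizers appearing in that condition.

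Condition 2 is exactly the content of the theorem established above, namely that $\beta_\xi$ of \eqref{alpha} is an isomorphism whenever the base rings are hereditary and each $X_{ij}$ is a union of projectives on both sides --- which is precisely what membership of the $X_{ij}$ in $\mathcal{X}$ asserts; so I would simply cite it. Condition 4 follows from Lemma \ref{l} together with its left-handed analogue: for $X$ and $Y$ in $\mathcal{X}$ sharing a middle ring, all their one-sided structures are flat unions of projectives, so $X\otimes_R Y$ is again a union of projectives on each side while its outer rings are unchanged and hereditary, whence $X\otimes_R Y \in \mathcal{X}$. Condition 5 follows from Lemma \ref{ll}: an epimorphism $X\to Y$ in $\mathcal{V}$ is an inclusion of the underlying module of $Y$ into that of $X$, and since $X$ is a union of projectives on each side over the hereditary rings in play, so is the submodule $Y$, giving $Y \in \mathcal{X}$.

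With conditions 1--5 in hand, Theorem \ref{t2} asserts that the quantum categories and quantum modules whose underlying comodules lie in $\mathcal{X}$ form a bicategory under $\bullet$; translating ``quantum category'' as ``bialgebroid'', ``quantum module'' as ``comodule algebra'', and the dualized projectivity hypotheses back into $k\operatorname{-Mod}$, this is precisely the statement of the theorem. The only step carrying genuine content is condition 2, which the earlier theorem has already settled; everything else is an application of the ring-theoretic lemmas assembled earlier in the section --- flatness of unions of projectives, their stability under iterated tensor products (Lemmas \ref{l1} and \ref{l2}), and the closure of unions of projectives under submodules over hereditary rings (Lemma \ref{ll}). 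I expect no real obstacle beyond the recurring bookkeeping of Section 5, namely keeping straight which side of a bimodule is used when tensoring and which when applying $\mathrm{Hom}$.
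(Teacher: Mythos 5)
Your proposal is correct and follows the same route as the paper: the paper likewise takes $\mathcal{X}$ to be the class of modules over hereditary rings that are unions of projectives on both sides, asserts that it satisfies conditions 1--5, and invokes Theorem \ref{t2}. You have merely spelled out the verification of the five conditions (via the $\beta_\xi$ theorem, Lemmas \ref{l1}--\ref{ll}, and flatness of unions of projectives) in more detail than the paper does.
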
  

\section{Modules of weak bialgebras}

Cauchy completion $\mathcal{QV}$ of a category $\mathcal{V}$ is
the category whose objects are pairs $(X, e)$, where $X$ is an object and $e :
X \to X$ is an idempotent.  A morphisms $f : (X, e) \to (Y,
e')$ in $\mathcal{V}$ is a map $f : X \to Y$ such that $e'fe = f$. Note that the
identity on $(X, e)$ is $e$. Idempotents split in $\mathcal{QV}$.  

We will assume that idempotents split in $\mathcal{V}$ itself. In this case
$\mathcal{QV}$ is equivalent to $\mathcal{V}$. Using this equivalence we will
sometimes identify an object $(X, e)$ of $\mathcal{QV}$ with its splitting in
$\mathcal{V}$. 

A parallel pair of morphisms $f_1, f_2 : X \to Y$ in $\mathcal{V}$ is called
cosplit if there exists an arrow $d : Y \to X$ such that 

$$df_1 = 1 \qquad f_1df_2 = f_2df_2$$

\bigskip

\noindent The map $df_2$ is an idempotent whose splitting
provides a cosplit coequalizer for the pair $f_1$, $f_2$.

A Frobenius monoid in a monoidal category $\mathcal{V}$ is an
object $C$ with a monoid and a comonoid structures on it related by 

\begin{center}
\includegraphics[scale=0.3]{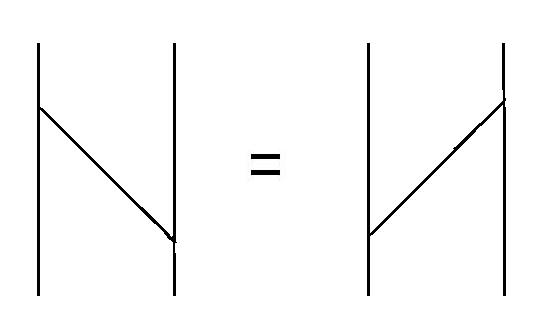}
\end{center}
 
\noindent A Frobenius monoid is separable if additionally

\begin{center}
\includegraphics[scale=0.3]{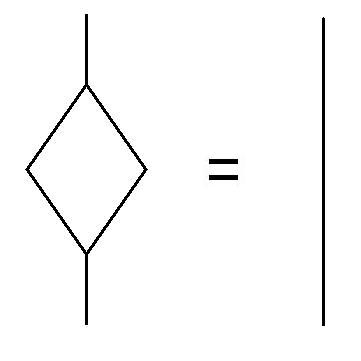}
\end{center}

\noindent Every separable Frobenius monoid $C$ is self-dual in $\mathcal{V}$
with unit and counit: 

\begin{center}
\includegraphics[scale=0.3]{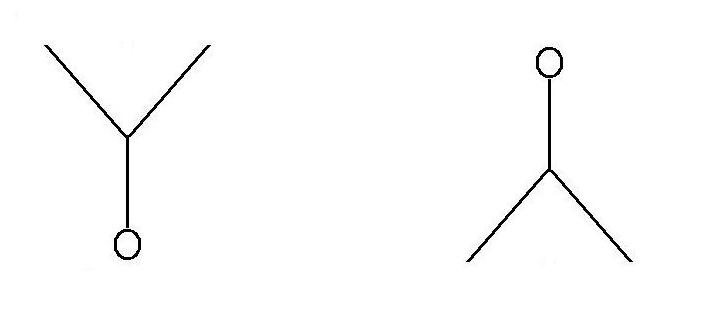}
\end{center}

Suppose that $X$ is a right $C$-comodule and $Y$ is a left $C$-comodule. If
$C$ is separable Frobenius, then the pair of morphisms 

$$X\otimes Y \two^{\delta_r^X}_{\delta_l^Y} X\otimes C\otimes Y$$

\noindent is cosplit by
$(1\otimes\mu\otimes1)(1\otimes1\otimes\delta_l)$. The induced
idempotent $a$ on $X\otimes Y$ in the string notation is

\begin{center}
\includegraphics[scale=0.3]{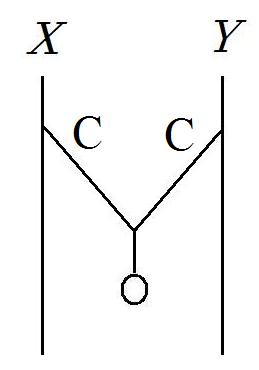}
\end{center}

\noindent So if $C$ is a separable Frobenius, then we have

\begin{equation}\label{a}
X\otimes_C Y = (X\otimes Y, a)\text{.}
\end{equation}

\bigskip
 
Suppose that $X$ is a right $C$-module and $Y$ is a left $C$-module. 
$X\otimes_C Y$ is defined by a coequalizer of the pair

\begin{equation}\label{mp}
X\otimes C\otimes Y \two X\otimes Y\text{.}
\end{equation}

\bigskip

\noindent Very much like the case of comodules if $C$ is a separable Frobenius monoid then 

\begin{equation}\label{m}
X\otimes Y = (X\otimes Y, b)\text{,}
\end{equation}

\bigskip

\noindent where $b$ is the idempotent on $X\otimes Y$: 

\begin{center}  
\includegraphics[scale=0.3]{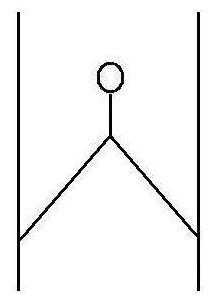}
\end{center}

Henceforce $C$ will be a separable Frobenius monoid.

The coaction of a comodule $$X : C^o\otimes C' \spanarr C^o\otimes
C'$$ in string notation is

\begin{center}
\includegraphics[scale=0.3]{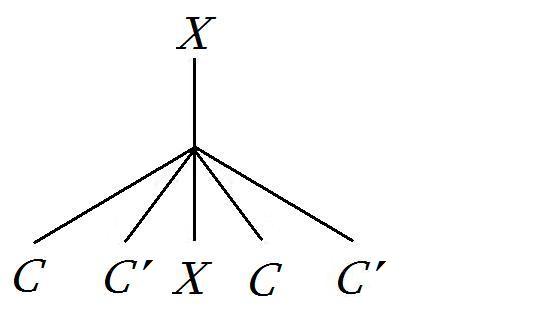}
\end{center}

Since such a comodule is regarded as a left $C$-comodule and a right
$C$-comodule using the right $C^o\otimes C$ coaction on it, the tensor product
$X_1\otimes_C X_2$ over $C$ will be $(X_1\otimes X_2, a)$, with $a$ being the
idempotent

\begin{center}
\includegraphics[scale=0.3]{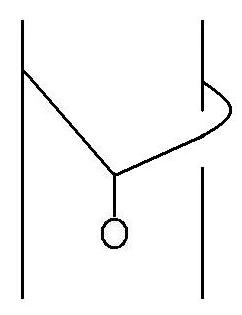}
\end{center}

\noindent Note that here we have taken a free hand with string notation. In the
above diagram it is not clear that on the left string we are using the right
$C$-coaction and on the right string we are using the right $C^o$-coaction.
However this should be clear from the context. The same occurs below. 
  
Since $C$ is selfdual in $\mathcal{V}$, $\mathrm{coHom}(C, X)$ exists
for every $X$ and is given by

$$\mathrm{coHom}(C, X) = C\otimes X\text{,}$$ 

\bigskip

\noindent with coevaluation 

\begin{center}
\includegraphics[scale=0.3]{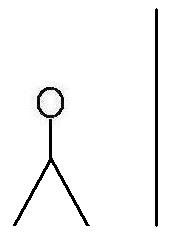}
\end{center}

The diagram $\eqref{coeqpair}$ becomes 

\bigskip   

\begin{equation}\label{ftn} 
(C\otimes X_1\otimes X_2, C\otimes a)
\two^{(\delta_l^C)^\ast}_{(\delta_l^{C^o})^\ast} (X_1\otimes X_2, a)\text{.}
\end{equation}

\bigskip 

\noindent The maps $(\delta_l^C)^\ast, (\delta_l^{C^o})^\ast :
C\otimes X_1\otimes X_2 \to X_1\otimes X_2$ are

\begin{center}
\includegraphics[scale=0.3]{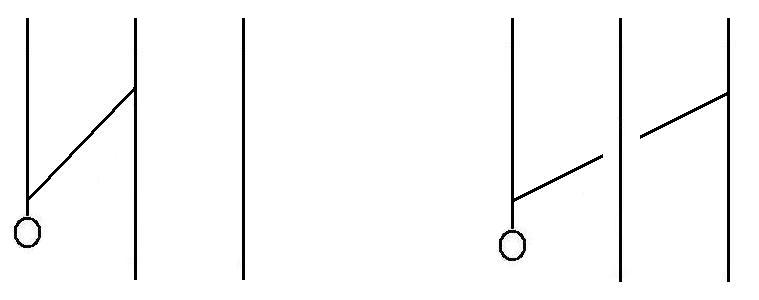}
\end{center}

\bigskip

Up to precomposing with the isomorphism $c\otimes X_2$ the
pair $(\delta_l^C)^\ast, (\delta_l^{C^o})^\ast$ is an instance of the
pair \eqref{mp} in $\mathcal{V}$. So the coequalizer is computed by
\eqref{m}. Taking the coequalizer of \eqref{ftn} we get

$$T_2(X_1, X_2) = (X_1\otimes X_2, d_2)\text{,}$$

\bigskip 

\noindent where the idempotent $d_2$ is 

\begin{center}
\includegraphics[scale=0.3]{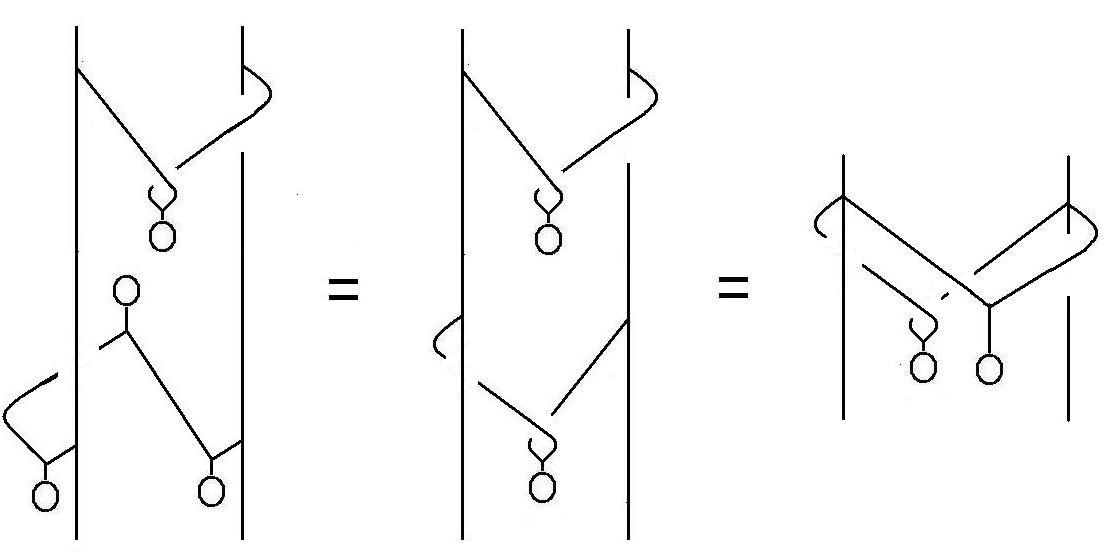}
\end{center}

\noindent $T_2(X_1, X_2)$ is a comodule $C^o_0\otimes C_2'\spanarr C^o_0\otimes
C_2'$ with coaction $(X_1\otimes X_2, d) \to (C_0\otimes X_1\otimes X_2\otimes C_2,
1\otimes d_2\otimes 1)$:
 
\begin{center}
\includegraphics[scale=0.3]{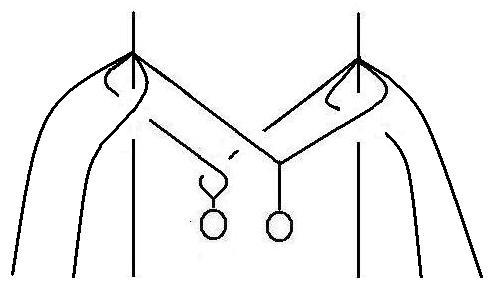}
\end{center}

Generally we have:

\begin{theorem} If the base comonoids are separable Frobenius monoids, then for
$n \geq 2$

$$T_n(X_1\ldots X_n) = (X_1\otimes\ldots \otimes X_n, d_n)\text{,}$$ 

\bigskip

\noindent where $d_n$ is the idempotent

\begin{center}
\includegraphics[scale=0.3]{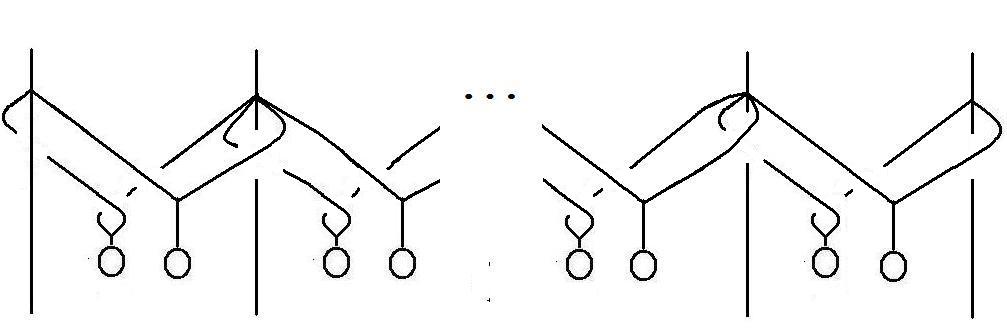} 
\end{center}
 
\end{theorem}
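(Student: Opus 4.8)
The plan is to argue by induction on $n$, with the base case $n=2$ being exactly the computation $T_2(X_1,X_2)=(X_1\otimes X_2,d_2)$ carried out just above this theorem. Before the induction I would record two facts that hold because each base comonoid is separable Frobenius: first, since such a comonoid is self-dual, $\mathrm{coHom}(C,X)=C\otimes X$ exists for every $X$, so all the left Kan extensions \eqref{Tn} exist and every $T_n$ is defined; second, each of the $C\otimes$-coequalizers that enters the computation of $T_n$ via Lemma \ref{lemma} becomes, after transposing the relevant $C$ to the left along the cohom and precomposing with a braiding, an instance of the pair \eqref{mp}, so it is a cosplit --- hence absolute --- coequalizer, computed by the idempotent formula \eqref{m}. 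Absoluteness in turn makes all the associativity maps $\beta_\xi$ of \eqref{alpha} isomorphisms, so the $T_n$ may be reassociated freely.

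For the inductive step I would use the isomorphism $\beta_{1+(n-1)}$ to identify $T_n(X_1,\ldots,X_n)$ with $T_2\big(X_1,\,T_{n-1}(X_2,\ldots,X_n)\big)$. By the inductive hypothesis the inner term is the comodule $(X_2\otimes\cdots\otimes X_n,\,d_{n-1})$ from $C_1^o\otimes C_n$ to $C_1^o\otimes C_n$, whose coaction is the evident iterate of the coaction diagram drawn above for $T_2$. Feeding this comodule as the second argument of the $n=2$ formula gives $T_n(X_1,\ldots,X_n)=(X_1\otimes\cdots\otimes X_n,\,d)$, where $d$ is obtained from the picture for $d_2$ by substituting the coaction of $(X_2\otimes\cdots\otimes X_n,d_{n-1})$ for the coaction of the second input. (One could instead bypass $\beta$ and compute $T_n$ directly by the $n-1$ successive applications of Lemma \ref{lemma} indicated in the text, replacing each cosplit coequalizer by \eqref{m}; the accumulated idempotent is again this $d$.)

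The only step I expect to be genuinely non-formal is then verifying that $d=d_n$. Here I would expand the substituted coaction inside the diagram for $d$, push the Frobenius multiplications past the comultiplications using the Frobenius axioms, and collapse the loop this creates using the separability axiom; the normal form reached should be precisely the comb-shaped idempotent $d_n$ displayed in the statement. The hazards are entirely graphical --- keeping track of which strand carries a right $C$-coaction and which a right $C^o$-coaction (the ambiguity in string notation already flagged above), and applying the Frobenius relations in the right order --- so I would present this last step as an explicit sequence of labelled string diagrams rather than in prose.
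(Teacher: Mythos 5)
Your proposal is correct and amounts to what the paper intends: the paper offers no explicit argument beyond the $n=2$ computation together with the earlier remark that $T_n$ is obtained by consecutive applications of Lemma~\ref{lemma}, each step being a cosplit coequalizer (via separability) computed by splitting an idempotent as in \eqref{m}, with the accumulated idempotent being $d_n$. One caution: the paper deduces the invertibility of the $\beta_\xi$ \emph{from} this theorem rather than the other way around, so your parenthetical route --- direct iteration of Lemma~\ref{lemma} rather than reassociation through $\beta_{1+(n-1)}$ --- is the one to make primary in order to avoid any appearance of circularity.
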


We can see that if the base comonoids are Frobenius
separable, then for a partition $\xi$ not envolving parts of zero length the
map $\beta_\xi$ \eqref{alpha} is an isomorphism.

$T_C() = C\otimes C$ with left and right $C^o\otimes C$-coactions: 

\begin{center}
\includegraphics[scale=0.3]{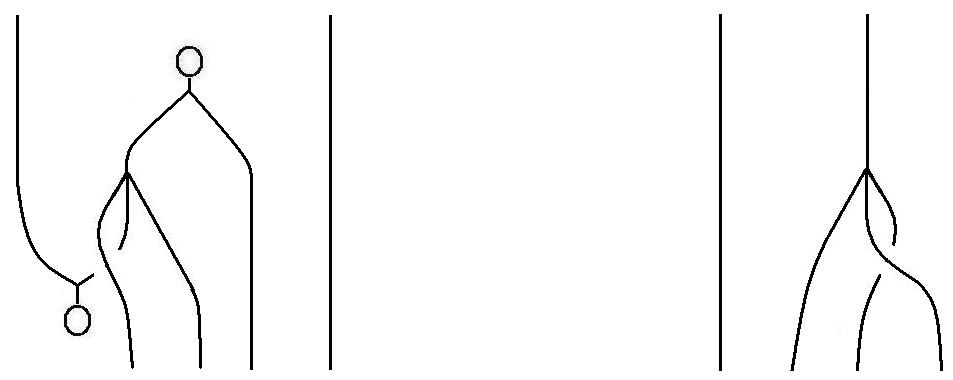} 
\end{center}

The next
proposition asserts that the class of all comodules $C^o\otimes C' \spanarr C^o\otimes C'$ with $C$ a Frobenius monoid satisfies
 condition 3 in Theorem $\ref{t1}$. The remaining conditions are obviously
satisfied.

\begin{prop}
Suppose that $X$ is a right $C$-comodule. If $C$ is a separable Frobenius
monoid and $X\otimes - : \mathcal{V} \to \mathcal{V}$ preserves reflexive
coequalizers, then the functor

$$X\otimes_C - : \mathrm{Comod}\mathcal{V}(C, I) \to \mathcal{V}$$

\bigskip 

\noindent preserves reflexive coequalizers.
\end{prop}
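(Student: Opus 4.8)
The plan is to exploit equation \eqref{a}, which under the separable Frobenius hypothesis presents $X\otimes_C Y$ as the splitting of an idempotent on $X\otimes Y$, and thereby to exhibit $X\otimes_C-$ as a retract of a functor that manifestly preserves reflexive coequalizers.

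First I would observe that, for a left $C$-comodule $Y$, equation \eqref{a} writes $X\otimes_C Y=(X\otimes Y, a_Y)$, the idempotent $a_Y$ on $X\otimes Y$ being assembled from the multiplication of the Frobenius monoid $C$ together with the right coaction of $X$ and the left coaction of $Y$. Since a morphism in $\mathrm{Comod}\mathcal{V}(C,I)$ is by definition a map compatible with the coactions, the assignment $Y\mapsto a_Y$ is a natural idempotent on the functor $X\otimes U(-)\colon\mathrm{Comod}\mathcal{V}(C,I)\to\mathcal{V}$, where $U$ is the forgetful functor. As idempotents split in $\mathcal{V}$ (our standing assumption), this natural idempotent splits in the functor category, and its splitting is exactly $X\otimes_C-$. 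In other words $X\otimes_C-$ is a retract of $X\otimes U(-)$, witnessed by natural transformations $i\colon X\otimes_C-\Rightarrow X\otimes U(-)$ and $r\colon X\otimes U(-)\Rightarrow X\otimes_C-$ with $ri=1$ and $ir=a$.

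Next I would check that $X\otimes U(-)$ preserves reflexive coequalizers. The forgetful functor $U$ from the Eilenberg--Moore category of coalgebras for the comonad $C\otimes-$ admits a right adjoint, the cofree comodule functor $Z\mapsto(C\otimes Z, \delta\otimes Z)$, so $U$ preserves all colimits; and $X\otimes-\colon\mathcal{V}\to\mathcal{V}$ preserves reflexive coequalizers by hypothesis. Hence the composite $X\otimes U(-)$ preserves reflexive coequalizers. It then remains to invoke the elementary fact that a retract, in the functor category, of a functor preserving a class of colimits again preserves that class: given a colimiting cocone $\lambda$ for a diagram $D$, a cocone $\mu$ under $(X\otimes_C-)D$ induces the cocone $\mu\circ r_D$ under $(X\otimes U(-))D$, which factors uniquely through $(X\otimes U(-))\lambda$; precomposing that factorization with $i$ yields a factorization through $(X\otimes_C-)\lambda$, and uniqueness follows by postcomposing any competitor with $r$ and using $ri=1$. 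Applying this with $F=X\otimes U(-)$ and $F'=X\otimes_C-$ gives the proposition.

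The one step that requires real attention is the first: checking that the idempotent $a_Y$ of \eqref{a} is genuinely natural in the comodule $Y$, so that $X\otimes_C-$ is literally the splitting of a natural idempotent. This is precisely where separability of the Frobenius monoid $C$ does its work --- without it $X\otimes_C Y$ is only a coreflexive equalizer, which need not interact well with reflexive coequalizers, whereas the splitting of an idempotent is an absolute limit and is carried along by any functor. If one prefers to avoid functor-category language, the same conclusion can be reached by splitting the three idempotents attached to a reflexive coequalizer $Y_1\to Y_0\to Q$ in $\mathrm{Comod}\mathcal{V}(C,I)$ after applying $X\otimes-$, and chasing the universal property through the splittings, using naturality of $a$ to commute it past the quotient map and past the parallel pair; but the retract argument is shorter.
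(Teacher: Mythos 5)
Your proof is correct and follows essentially the same route as the paper: both arguments rest on the presentation \eqref{a} of $X\otimes_C Y$ as the splitting of an idempotent natural in $Y$, together with the hypothesis that $X\otimes -$ preserves reflexive coequalizers, so that the desired coequalizer is obtained as an absolute (idempotent-splitting) retract of one that is already preserved. Your packaging of this as ``$X\otimes_C-$ is a natural retract of $X\otimes U(-)$, and retracts of functors preserving a class of colimits preserve that class'' is just a cleaner phrasing of the paper's step of splitting the idempotents $a_i$ in $\mathcal{QV}$.
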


\begin{proof}
Let 

$$X_1 \two X_2 \to X_3$$

\noindent be a reflexive coequalizer in $\mathrm{Comod}\mathcal{V}(C, I)$. It
is computed as a coequalizer in $\mathcal{V}$. Since $X\otimes -$ preserves
reflexive coequalizers

$$X\otimes X_1 \two X\otimes X_2 \to X\otimes X_3$$

\noindent also is a coequalizer. Further, we have a coequalizer in
$\mathcal{QV}$:

$$(X\otimes X_1, a_1) \two (X\otimes X_2, a_2) \to (X\otimes X_3, a_3)$$

\noindent where $a_i$ are idempotents as in \eqref{a}. By splitting these
idempotents we prove that

$$X\otimes_C X_1 \two X\otimes_C X_2 \to X\otimes_C X_3$$

\bigskip   

\noindent is a coequalizer in $\mathcal{V}$.
\end{proof}

In \cite{PS} it was shown that a quantum category with a separable Frobenius
base monoid is the same as a weak bialgebra in $\mathcal{V}$, which is an object
$A$ with a comonoid and monoid structures on it related in a certain way. 

\begin{definition}
A module between weak bialgebras is a module between quantum categories with a
separable Frobenius base monoid. 
\end{definition} 

\begin{definition}
A module comonoid between weak bialgebras is a quantum module between quantum categories with separable Frobenius base comonoid. 
\end{definition}


\begin{thebibliography}{widest-label}

\bibitem{B} J. B\'enabou, Introduction to bicategories, Lecture Notes in
Mathematics (Springer-Verlag, Berlin) 47 (1967), 1--77. 
\bibitem{DMS} B. Day, P. McCrudden, R. Street, Dualizations and antipodes,
Applied Categorical Structures 11 (2003) 219-227. 
\bibitem{DS} B. Day and R. Street, Quantum categories, star autonomy, and
quantum groupoids, Fields Institute Communications (American Math. Soc.) 43
(2004), 187--226. 
\bibitem{KW} A. Kock, G. C. Wraith, Elementary toposes. Lecture Notes Series, No. 30. Matematisk Institut, Aarhus Universitet, Aarhus, 1971. i+118 pp.
\bibitem{PS} Pastro, C.; Street, R. Weak Hopf monoids in braided monoidal categories. Algebra Number
Theory 3 (2009), no. 2, 149--207.
\bibitem{FTM1} R. Street, The formal theory of monads, J. Pure
Appl. Algebra 2 (1972) 149--168. 
\bibitem{path} R. Street, Quantum Groups: a
path to current algebra,  Australian Mathematical Society Lecture Series 19, Cambridge University press, 2007. 
\bibitem{Sweedler} M.E. Sweedler, Groups of simple algebras, Inst. Hautes
\'etudes Sci. Publ. Math. No. 44 (1974) 79--189. \bibitem{Takeuchi} M. Takeuchi,
Groups of algebras over $A\otimes\bar{A}$, J. Math. Soc. Japan 29 (1977), 459--492
\end{thebibliography}
\end{document}